\documentclass[a4paper,11pt,leqno,english]{smfart}
\usepackage{aeguill}
\usepackage{enumerate}
\usepackage{amssymb,amsmath,latexsym,amsthm}
\usepackage[T1]{fontenc}
\usepackage{geometry}
\usepackage{url}
\usepackage[french, main=english]{babel}
\usepackage[utf8]{inputenc}
\usepackage{mathrsfs}
\usepackage{xcolor}
\usepackage{comment}
\definecolor{violet}{rgb}{0.0,0.2,0.7}
\definecolor{rouge2}{rgb}{0.8,0.0,0.2}
\usepackage{tikz}
\usepackage{empheq}
\usepackage{tikz-cd}
\usetikzlibrary{matrix,arrows,decorations.pathmorphing}
\usepackage{hyperref}
\usepackage{mathpazo}
\usepackage[colorinlistoftodos,prependcaption]{todonotes}

\hypersetup{
    bookmarks=true,         % show bookmarks bar?
    unicode=false,          % non-Latin characters in Acrobatâs bookmarks
    pdftoolbar=true,        % show Acrobatâs toolbar?
    pdfmenubar=true,        % show Acrobatâs menu?
    pdffitwindow=false,     % window fit to page when opened
    pdfstartview={FitH},    % fits the width of the page to the window
    pdftitle={},    % title
    pdfauthor={},     % author
    colorlinks=true,       % false: boxed links; true: colored links
   linkcolor=rouge2,          % color of internal links
    citecolor=violet,        % color of links to bibliography
    filecolor=black,      % color of file links
    urlcolor=cyan}           % color of external links
\setcounter{tocdepth}{1}
\usepackage{enumitem}
\usepackage{appendix}

 \theoremstyle{plain}    
 \newtheorem{thm}{Theorem}[section]
\theoremstyle{plain} 
\newtheorem{bigthm}{Theorem}
\newtheorem{bigcoro}[bigthm]{Corollary}

\newtheorem{ques}{Question}

 \numberwithin{equation}{section} %% Comment out for sequentially-numbered
 \numberwithin{figure}{section} %% Comment out for sequentially-numbered
 \newtheorem{cor}[thm]{Corollary} %%Delete [thm] to re-start numbering
 \theoremstyle{plain}    
 \newtheorem{prop}[thm]{Proposition} %%Delete [thm] to re-start numbering
 \theoremstyle{plain}    
 \newtheorem{lem}[thm]{Lemma} %%Delete [thm] to re-start numbering
 \theoremstyle{remark}
  \newtheorem{claim}[thm]{Claim} %%Delete [thm] to re-start numbering
 \theoremstyle{remark}
 \newtheorem{rem}[thm]{Remark}
 \theoremstyle{definition}
\newtheorem{exa}[thm]{Example}
\theoremstyle{plain}  

\theoremstyle{plain}

\theoremstyle{definition}
\newtheorem{defi}[thm]{Definition}

\newtheorem*{ackn}{Acknowledgements}

\newcommand{\C}{{\mathbb{C}}}
\newcommand{\N}{{\mathbb{N}}}

\newcommand{\R}{{\mathbb{R}}}

\newcommand{\cS}{{\mathcal{S}}}

\def\1{\mathbf{1}}

\newcommand{\wt}{\widetilde}

\newcommand{\e}{\varepsilon}
\newcommand{\om}{\omega}
\newcommand{\f}{\varphi}

\newcommand{\p}{\psi}

\newcommand{\ep}{\varepsilon}

\newcommand{\Ric}{\mathrm{Ric} \,}

\newcommand{\cR}{\mathcal R}

\renewcommand{\ge}{\geq}
\renewcommand{\le}{\leq}

\newcommand{\tr}{\operatorname{tr}}

\newcommand{\PSH}{\operatorname{PSH}}

\newcommand{\Rm}{\operatorname{Rm}}

%
%%%%%%%%%%%%%%%%%%%%%%%%%%%%%%%%%%%%%%%%%%%%%%%%%%%%%%%%%%%%%%%%%%
%

\setcounter{tocdepth}{1}

\title{K\"ahler-Ricci flows coming out of Metric spaces}
\date{\today}

\author{Alix Deruelle}

\address{Laboratoire de Mathématiques d'Orsay \& Institut Universitaire de France; 
UMR 8628, Université Paris-Saclay, CNRS, Bâtiment 307, rue Michel Magat, 91405, Orsay, France}

\email{alix.deruelle@universite-paris-saclay.fr}

\author{Vincent Guedj}

\address{Institut de Mathématiques de Toulouse \& Institut Universitaire de France; UMR 5219, Université de Toulouse; CNRS, UPS, 118 route de Narbonne, F-31062 Toulouse Cedex 9, France}

\email{vincent.guedj@math.univ-toulouse.fr}

\author{Henri Guenancia}

\address{Univ. Bordeaux, CNRS, Bordeaux INP, IMB, UMR 5251, F-33400 Talence, France}
\email{henri.guenancia@math.cnrs.fr}

\author{Ahmed Zeriahi}

\address{Institut de Mathématiques de Toulouse; UMR 5219, Université de Toulouse; CNRS, UPS, 118 route de Narbonne, F-31062 Toulouse Cedex 9, France}

\email{ahmed.zeriahi@math.univ-toulouse.fr}

\begin{document}

\begin{abstract}  
Given a compact Kähler manifold $X$ and a closed, positive $(1,1)$-current $T$ on $X$, we find sufficient conditions for $T$ to induce a metric structure $(X,d_T)$ which is the Gromov-Hausdorff limit of compact Kähler manifolds either in a "static" way or at time zero of smooth K\"ahler-Ricci flows. In dimension $1$ we extend works of T. Richard and M. Simon, showing that any oriented compact Alexandrov surface with bounded integral curvature and without cusp is the initial datum of a K\"ahler-Ricci flow.
\end{abstract} 

\maketitle

\tableofcontents

\section{Introduction}

   \subsection{Overview}
  A (noncollapsed) Ricci limit space of dimension $n$
 is a compact metric space which is the Gromov-Hausdorff
limit of non-collapsing 
Riemannian manifolds $(M_i,g_i)$ of dimension $n$
 with uniformly bounded Ricci curvature,
$$
{\rm Ric}(g_i) \geq -K
\; \; \; \text{ and } \; \; \;
{\rm Vol} \, B_{g_i}(x_i,1) \geq v_0>0.
$$

  A long standing conjecture  attributed to Anderson, Cheeger, Colding and Tian  (see \cite{ChCo97})
  asserts that    a Ricci limit space is homeomorphic to a smooth manifold outside a singular set
   of Hausdorff dimension $n-4$. This conjecture has been settled 
   recently by Simon and Topping  \cite{Sim12,ST17} in dimension $3$
   (we shall discuss in some detail the $2$-dimensional case in section \ref{sec:alexandrov},
   following the work of Richard \cite{Rich18}), by 
running smooth Ricci flows from these Ricci limit spaces. See also \cite{Naber-Open-Pb} for an update on this conjecture.\\

    It is a natural problem  to try and characterize those  metric spaces that
   can arise as initial data of smooth Ricci flows, where
 the convergence at time zero should hold in the Gromov-Hausdorff sense.
We study in this article the K\"ahler version of this problem: we consider 
{\it K\"ahler Ricci limit spaces} (the approximating manifolds are assumed
to be {\it compact K\"ahler}),
and take advantage of the strong analytic rigidity properties of
such families (see \cite{DS14,DS17,LS18}) to relax the Ricci lower bound.

   Motivated by the previous discussion, let $(X,\omega_X)$ be a compact K\"ahler manifold of complex dimension $n$,
and let $T$ be a positive closed current cohomologous to $\omega_X$.
%without logarithmic singularity.
 It follows from the $\partial\overline{\partial}$-lemma that $T=\omega_X+dd^c \f$
for some {\it quasi-plurisubharmonic} (qpsh) function $\f$. Recall that a function
is qpsh if it is locally the sum of a smooth and a plurisubharmonic function.
Here $d^c=\frac{1}{2i \pi}(\partial-\overline{\partial})$ 
 so that $dd^c =\frac{i}{\pi}\partial\overline{\partial}$.
We assume in this article that $\f$
is {\it continuous}, so that the Monge-Amp\`ere measure $T^n$ is well-defined in the weak 
sense of currents (see \cite{GZbook}). We moreover assume that 
$T^n=f \omega_X^n$ is absolutely continuous with respect to Lebesgue measure,
with $f=e^{\p_+-\p_-}$ for some qpsh functions $\p_{\pm}$,
with $e^{-\p_-} \in L^1(\omega_X^n)$.

\smallskip

 We first address the following general problem:

 \begin{ques}\label{ques-1}
 Under which sufficient conditions does the current $T$ define a natural (semi)distance $d_T$  on $X$, 
 and what are the geometric properties of $d_T$ ?
 \end{ques}
 
 Observe that we have not defined what the semi-distance $d_T$ induced by $T$ is yet and we believe that such a definition is part of the difficulty to answer Question \ref{ques-1}. Indeed, defining $d_T(x,y)$ for two points $x$ and $y$ in $X$ as the infimum
  of the lengths induced by $T$ of Lipschitz curves joining $x$ to $y$ seems too naive to even make sense in general.
   Nonetheless, we are able to answer Question \ref{ques-1} both in complex dimension $n=1$ in full generality and in the setting of \textit{geometric} isolated singularities (see Definition \ref{def:geomsing}) in  complex dimensions $n\geq 2$. 
%It is worth noting that 
The proofs are independent of the K\"ahler-Ricci flow.

 \smallskip
 
% Now, if $(X,\omega_X)$ is a compact K\"ahler manifold of complex dimension $n$, and if $T$ is a positive closed current cohomologous to $\omega_X$ without logarithmic singularity as considered above, 
It has been shown in \cite{GZ17,DnL17} that there exists a unique K\"ahler-Ricci flow 
on $X$ with $T$ as initial condition, i.e. a family of 
K\"ahler forms $(\omega_t)_{t \in (0,T_{max})}$, such that
$$
\frac{\partial \omega_t}{\partial t}=-{\rm Ric}(\omega_t)
\; \; \text{ and } \; \;
\omega_t \stackrel{t \rightarrow 0}{\longrightarrow} T.
$$
The maximal existence time only depends on the cohomology class of the initial datum,
$$
T_{\rm max}:=\sup \{ t>0 , \; \{\omega_X\} -tc_1(X)  \text{ is K\"ahler } \}.\\
$$
The convergence at time zero holds in the weak sense of currents. A second aspect of this article is to reinforce the latter, 
when the initial datum induces a metric space.
We  try and establish Gromov-Hausdorff convergence at time zero whenever possible.

To slightly simplify the analysis, we actually consider the following twisted version 
\begin{equation}\label{eqn-krf-twist-shout}
\frac{\partial \omega_t}{\partial t}=-{\rm Ric}(\omega_t)+{\rm Ric}(\omega_X)
\; \; \text{ and } \; \;
\omega_t \stackrel{t \rightarrow 0}{\longrightarrow} T.
\end{equation}
%The maximal existence time is
In this case $T_{\rm max}=+\infty$  and the results 
of \cite{GZ17,DnL17} apply.
Similar to the discussion on the current $T$ above, it holds that $\omega_t=\omega_X+dd^c \f_t$ and
the twisted K\"ahler-Ricci flow equation
is equivalent to the  scalar parabolic complex Monge-Amp\`ere equation
$$
(\omega_X+dd^c \f_t)^n=e^{\dot{\f_t}} \omega_X^n
\; \; \text{ and } \; \;
\f_t \stackrel{t \rightarrow 0}{\longrightarrow} \f.
$$
 We then address the following second general problem:

 \begin{ques}\label{ques-2}
 Under which sufficient conditions on $T$ does the K\"ahler-Ricci flow $(X,\omega_t)$ Gromov-Hausdorff converge to $(X,d_T)$ as $t\rightarrow 0$ ?
 \end{ques}
 
% Similarly to Question \ref{ques-1}, the notion of semi-distance $d_T$ is part of the problem and has to be dealt with carefully: 
We are able to answer Question \ref{ques-2} both in complex dimension $n=1$ in full generality and in the setting of \textit{geometric} isolated singularities 
%(see Definition \ref{def:geomsing}) 
in
% higher 
complex dimensions $n\geq 2$.

\subsection{Main results}

%Given a current $T$ as described in the previous section, 
Our first result analyses the definition and properties of the semi-distance induced by 
$T$ on $X$,
%and denoted by $d_T$ %where
%$$\ell_{T}(\gamma):=\int_0^1 \|\dot{\gamma}(t)\|_{T} dt.$$
%We show in Section \ref{sec:metricspace} that the length of (generic) pathes  that are piecewise linear in local charts is finite
% (Proposition \ref{pro:finitelength}), and that $d_{T}$ (Definition \ref{def:distance}) is often a distance (see Proposition \ref{pro:distance}, Example \ref{exa:ramified} and Section \ref{sec:alexandrov}).
% It turns out that it is quite delicate to study $d_T$ when $n \geq 2$ so that we restrict to currents 
when $T$ has {\it geometric singularities}:

\begin{defi} 
\label{def:geomsing}
We say that a positive closed $(1,1)$-current $T=\omega_X+dd^c \f$ has geometric singularities if
there are qpsh functions $\p_{\pm}$ on $X$
such that
\begin{itemize}
\item[$\bullet$] the qpsh function $\f$ is continuous and $T^n=e^{\p_+-\p_-} \omega_X^n$ with $e^{-\p_-} \in L^1(\omega_X^n)$;  
\item[$\bullet$] the functions $\p_{\pm}$ are smooth in some dense Zariski open set $\Omega\subset X$.
\end{itemize}
\end{defi}

Currents with geometric singularities are ubiquituous in complex geometry. Many canonical currents,
e.g. singular K\"ahler-Einstein metrics \cite{EGZ09} or 
limits  of the K\"ahler-Ricci flow approach to the Minimal Model Program \cite{ST09}
are of this type. 
%(in this case the open set where $\p^{\pm}$ are smooth is a Zariski open set).
% i.e. we further assume that
 %the qpsh functions $\p_{\pm}$ are smooth on a Zariski open subset $\Omega$ of $X$.
 We then set $Z =X \setminus \Omega$ and define the semi-distance induced by $T$ 
 %with geometric singularities 
 as follows: 
 $$
d_{T}(x,y)=\inf \left\{ \ell_{T}(\gamma), \; 
\gamma\, \text{Lipschitz path from } x \text{ to } y \,\, \mbox{s.t.} \, \,\gamma^{-1}(Z) \, \mbox{is finite} \right\}.
$$

     \begin{bigthm} \label{thmE}
     Let $(X,\omega_X)$ be a compact K\"ahler manifold and let $T$ be a current
with geometric singularities. Then the following holds:
\begin{enumerate}[label=$(\roman*)$]
\item $d_T$ is a semi-distance and it is the unique continuous extension to $X\times X$ of the geodesic distance
induced by $T$ on $\Omega\times\Omega$;
\item There exists $C>0$ and $\alpha\in(0,1)$ such that $d_{T} \le C d_X^\alpha$. In particular $d_{T}$ is finite;  
\item If $\psi_+$ has isolated analytic singularities, then $d_T$ is bi-Hölder equivalent to $d_X$.
\end{enumerate}
\end{bigthm}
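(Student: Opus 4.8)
The plan is to separate the two roles of $\Omega$: on it the current $T$ is an honest Kähler metric, whereas all the difficulty concentrates near the analytic set $Z=X\setminus\Omega$, which has real codimension $\ge 2$. First I would note that, $\f$ being continuous and $T^n=e^{\psi_+-\psi_-}\om_X^n$ having smooth, strictly positive density on $\Omega$, the interior regularity theory for the complex Monge-Amp\`ere equation forces $\f\in C^\infty(\Omega)$; hence $T|_\Omega$ is a genuine (incomplete) Kähler metric, the lengths $\ell_T(\gamma)$ are unambiguous along paths meeting $Z$ only finitely often, and the intrinsic geodesic distance $\delta_\Omega$ of $T|_\Omega$ is well defined, with $d_T\le\delta_\Omega$ on $\Omega\times\Omega$ by construction. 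The problem thus reduces to controlling the trace $\tr_{\om_X}T=n+\Delta_{\om_X}\f$, viewed as a function on $\Omega$, and to showing that the codimension-$\ge2$ set $Z$ produces neither infinities nor shortcuts.

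The technical core is an a priori Laplacian estimate for $T$ over $\Omega$, of the shape
\[
\tr_{\om_X}T\ \le\ C_\e\, e^{-\e\psi_-}\qquad\text{on }\Omega
\]
for every $\e>0$, the implicit bounded factor involving $\psi_+$ and $\f$ (both are bounded above: $\psi_+$ is qpsh, $\f$ is continuous). I would derive it by an Aubin-Yau/Chern-Lu computation, applying the maximum principle to an auxiliary function such as $\log\tr_{\om_X}T+\lambda\psi_--A\f$ and using the qpsh bounds $dd^c\psi_\pm\ge -C\om_X$ to absorb the curvature terms coming from $\Ric(T)=-dd^c(\psi_+-\psi_-)+\Ric(\om_X)$. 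Two points make this delicate: the estimate must be obtained with no boundary control, by working on the sublevel sets $\{\psi_->-M\}$ and letting $M\to\infty$ with constants independent of $M$; and the exponent produced by the maximum principle has to be reconciled with the hypothesis $e^{-\psi_-}\in L^1(\om_X^n)$. Granting this, choosing $\e=1/q$ and invoking $e^{-\psi_-}\in L^1$ yields
\[
\tr_{\om_X}T\ \in\ L^q(\om_X^n)\qquad\text{for every }q<\infty ,
\]
which is precisely what the metric estimates consume. I expect this step to be the main obstacle; everything downstream is careful but soft metric geometry.

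For the Hölder upper bound (ii), I would construct, given $x,y$ with $d_X(x,y)$ small, a short path from $x$ to $y$ by chaining across dyadic scales $R_k=\theta^k d_X(x,y)$. At scale $R_k$ one averages $\int_\gamma f\,ds_{\om_X}$, with $f:=\sqrt{\tr_{\om_X}T}\in L^q(\om_X^n)$ for all $q$, over a family of Lipschitz paths joining the two endpoints while spending as little $\om_X$-length as possible inside the tube of radius $R_k$ around $Z$ — this is possible because $\codim_\R Z\ge2$, so such detours exist and have $\om_X$-length $O(R_k)$ — and a mean value / coarea selection produces one such path along which $f$ is integrable, with $T$-length bounded by $C\|f\|_{L^q(B_{CR_k})}\,R_k^{1-2n/q}$. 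Taking $q>2n$ makes the scale-$R_k$ contribution of order $R_k^{1-2n/q}$ times a quantity that tends to $0$ with $d_X(x,y)$; summing the resulting geometric series gives $d_T(x,y)\le C\,d_X(x,y)^\alpha$ with $\alpha=1-2n/q\in(0,1)$, and in particular $d_T$ is finite.

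For the lower bound in (iii), if $\psi_+$ has isolated analytic singularities the \L ojasiewicz inequality gives $e^{\psi_+}\ge c\, d_X(\cdot,Z)^{N}$, while $e^{-\psi_-}\ge c>0$ because $\psi_-$ is bounded above; hence $\det_{\om_X}T\ge c\, d_X(\cdot,Z)^{N}$. Combined with the upper bound on $\tr_{\om_X}T$ from the second step (of polynomial type near the finitely many singular points, locally bounded elsewhere along $Z$), the smallest eigenvalue of $T$ with respect to $\om_X$ satisfies $\lambda_{\min}\ge \det_{\om_X}T/(\tr_{\om_X}T)^{n-1}\ge c\, d_X(\cdot,Z)^{N'}$, i.e. $T\ge c\, d_X(\cdot,Z)^{N'}\om_X$; integrating along paths yields $d_T\ge c\, d_X^{\beta}$, which with (ii) is the bi-Hölder equivalence. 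Finally, for (i): symmetry, the triangle inequality and $d_T(x,x)=0$ are immediate, continuity of $d_T$ follows from $d_T\le C d_X^\alpha$ together with lower semicontinuity of infima of lengths, and $d_T$ coincides on $\Omega\times\Omega$ with the geodesic distance $\delta_\Omega$ by a ``no shortcut'' lemma: any competitor crossing $Z$ finitely often can be perturbed, near each crossing and at a small radius selected by the same mean value argument, into a path lying in $\Omega$ with arbitrarily small increase of $T$-length, again using $\codim_\R Z\ge2$. Since $\Omega$ is dense in $X$, $d_T$ is therefore the unique continuous extension to $X\times X$ of the geodesic distance of $T|_\Omega$.
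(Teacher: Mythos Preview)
Your main technical claim does not hold. The Aubin--Yau/Chern--Lu argument with the auxiliary function $\log\tr_{\om_X}T+\lambda\psi_--A\f$ produces exactly $\tr_{\om_X}T\le C e^{-\psi_-}$, as in \cite{Pau08} (this is Theorem~\ref{thm:laplace} at $t=0$); there is no mechanism in the maximum principle to push the exponent down to an arbitrary $\e>0$. Consequently you only know $\tr_{\om_X}T\in L^p$ for the same $p>1$ for which $e^{-\psi_-}\in L^p$, and in general this $p$ is close to $1$---certainly not $>2n$. Your coarea/mean-value argument for (ii) then collapses, since it requires $q>2n$ to get a positive Hölder exponent $1-2n/q$. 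The paper circumvents this entirely: the upper bound $d_T\le C d_X^\alpha$ comes from the Hölder continuity of the \emph{potential} $\f$ (Theorem~\ref{thm:kolo}) combined with the distance estimate of Li \cite{Li21} recorded as Theorem~\ref{thm:diameter}, via the $W^{1,2}$ extension of $d_\omega(x_0,\cdot)$ from \cite{GGZ25}. No Laplacian bound is used for (ii) at all; the only input on the density is $e^{\psi_+-\psi_-}\in L^p$ for some $p>1$.

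For (iii), your route through $\lambda_{\min}\ge\det_{\om_X}T/(\tr_{\om_X}T)^{n-1}$ is also problematic: plugging in $\det_{\om_X}T=e^{\psi_+-\psi_-}$ and $\tr_{\om_X}T\le Ce^{-\psi_-}$ gives $\lambda_{\min}\ge c\,e^{\psi_+}e^{(n-2)\psi_-}$, which for $n\ge 2$ degenerates along the (possibly positive-dimensional) singular locus of $\psi_-$, not just at the isolated poles of $\psi_+$. The paper instead uses the \emph{lower} Laplacian estimate $T\ge C^{-1}e^{\psi_+}\om_X$ (the Chern--Lu half of Theorem~\ref{thm:laplace}), which is independent of $\psi_-$; since $\psi_+$ has isolated analytic singularities, $e^{\psi_+}\ge c\|z\|^{2\alpha}$ locally and $T$ dominates a cone metric, from which the reverse Hölder bound is immediate. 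Your ``no shortcut'' argument for (i) is in the right spirit and matches the paper's Proposition~\ref{prop dT}(ii), but note it already relies on having the Hölder upper bound to control the detour length.
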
 

In the proof of Theorem \ref{thmE}, it is further shown that if $\psi_+$ has isolated singularities that are not necessarily analytic then $d_{T}$ is a distance. In particular $(X,d_T)$ is isometric to the metric completion of the metric space associated to the Kähler manifold $(\Omega, T|_\Omega)$,  see Proposition~\ref{prop dT}.

As a combination of Theorem \ref{thmE} and the results of \cite{CCII}, \cite{LS18} and \cite{CJN}, we obtain the following corollary:
 \begin{bigcoro}\label{coro-B}
  Let $(X,\omega_X)$ be a compact K\"ahler manifold and let $T$ be a current
with geometric singularities. If  $\psi_+=0$ then $(X,d_T)$ is a Gromov-Hausdorff limit of K\"ahler manifolds with uniform Ricci lower bound. 
 \end{bigcoro}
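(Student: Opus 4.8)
The plan is to combine Theorem~\ref{thmE} with the deep regularity results for singular K\"ahler--Einstein metrics and, more generally, with the structure theory of Ricci limit spaces. The key observation is that when $\psi_+ = 0$, the Monge--Amp\`ere equation defining $T$ becomes $T^n = e^{-\psi_-}\,\omega_X^n$ with $e^{-\psi_-} \in L^1(\omega_X^n)$, which is exactly the type of equation governing singular K\"ahler--Einstein currents of non-positive curvature and, more generally, canonical metrics on mildly singular varieties. So first I would recast $T$ as a weak solution of such an equation and recall from \cite{EGZ09} that $\varphi$ is then globally continuous (which we already assume) and smooth on the Zariski open set where $\psi_-$ is smooth, so $T|_\Omega$ is a genuine K\"ahler metric.

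Next I would produce an explicit approximating sequence. The natural candidates are the K\"ahler metrics $\omega_j = \omega_X + dd^c \varphi_j$ obtained by solving the family of Monge--Amp\`ere equations $(\omega_X + dd^c\varphi_j)^n = c_j\, e^{-\psi_{-,j}}\omega_X^n$, where $\psi_{-,j} \downarrow \psi_-$ is a decreasing sequence of smooth functions and $c_j \to 1$ are normalizing constants; by Yau's theorem each $\omega_j$ is a smooth K\"ahler metric on $X$, and by stability of the Monge--Amp\`ere operator $\varphi_j \to \varphi$ uniformly, hence $\omega_j \to T$ as currents. The point of \cite{CCII} (Cheeger--Colding--Tian type estimates in the K\"ahler setting) together with \cite{LS18} is that, because these metrics satisfy a uniform Ricci lower bound coming from the uniform lower bound on $-dd^c\psi_{-,j}$ (after absorbing into $\mathrm{Ric}(\omega_X)$), the sequence $(X,\omega_j)$ is precompact in the Gromov--Hausdorff topology and any limit is a Ricci limit space whose regular part carries the limiting K\"ahler metric; the work \cite{CJN} identifies this limit intrinsically with the metric completion of $(\Omega, T|_\Omega)$. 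Combining with the last assertion of Theorem~\ref{thmE} (that $(X,d_T)$ is isometric to this metric completion, Proposition~\ref{prop dT}), one concludes that $(X,d_T)$ is the Gromov--Hausdorff limit of the $(X,\omega_j)$, which carry a uniform Ricci lower bound.

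The main obstacle I expect is matching the intrinsic metric $d_T$ defined by chains of Lipschitz paths avoiding $Z$ with the Gromov--Hausdorff limit metric, which is \emph{a priori} only defined abstractly and through length structures on the regular part of the limit. This is precisely where one needs that $d_T$ coincides with the metric completion of $(\Omega, T|_\Omega)$ — that identification is supplied by Theorem~\ref{thmE}(i) and Proposition~\ref{prop dT} — together with the fact, furnished by \cite{CCII} and \cite{CJN}, that the Gromov--Hausdorff limit of the $\omega_j$ is itself the metric completion of a K\"ahler metric on a dense open set that agrees with $T|_\Omega$. One must also check that the convergence $\omega_j \to T$ of currents, combined with the uniform $C^0$ (indeed local $C^\infty_{\mathrm{loc}}$ on $\Omega$, by standard elliptic bootstrap away from the singularities) control, upgrades to the required Gromov--Hausdorff convergence; here one invokes the general principle that uniform two-sided metric control on a common dense open set with controlled geometry of the complement forces Gromov--Hausdorff convergence of the completions. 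The verification that all the cited results apply in the stated generality — in particular that the uniform Ricci lower bound survives the limiting procedure with $\psi_- \in L^1$ only assumed via $e^{-\psi_-} \in L^1$ — is the delicate point, but it is exactly the content of the quoted references, so the argument here is one of assembling rather than of new estimates.
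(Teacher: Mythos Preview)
Your overall strategy matches the paper's: approximate $\psi_-$ by smooth $\psi_{-,j}$, solve the Monge--Amp\`ere equations to obtain K\"ahler metrics $\omega_j$ with a uniform Ricci lower bound, extract a Gromov--Hausdorff limit $(X,d_\infty)$, and then identify $d_\infty$ with $d_T$ via Proposition~\ref{prop dT}. The easy inequality $d_T\ge d_\infty$ indeed follows from local smooth convergence $\omega_j\to T$ on $\Omega$ (this is Lemma~\ref{ineq dist}).

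The gap is in the reverse inequality $d_\infty\ge d_T$. You attribute this to \cite{CCII} and \cite{CJN} as a black box, and to a ``general principle'' about two-sided control on a dense open set forcing GH convergence of completions. Neither reference contains such a statement, and no such general principle holds: one must rule out that $d_\infty$-minimizing geodesics spend positive length inside $Z$, where you have no metric control. The paper does this by proving the Hausdorff dimension bound $\dim_{\mathcal H}(Z\cap\mathcal R,\,d_\infty)\le 2n-2$ for the part of $Z$ lying in the \emph{metric} regular set $\mathcal R$ of the limit. This is nontrivial because $d_\infty$ is only bi-H\"older (not bi-Lipschitz) equivalent to $d_X$, so the complex-analytic dimension of $Z$ does not a priori control its $d_\infty$-Hausdorff dimension. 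The argument uses the holomorphic charts on balls close to $\C^n$ constructed in \cite{LS18}, together with the bi-H\"older distortion estimates of \cite{CJN}, to transport a local defining function for $Z$ into $\C^n$ and bound the Hausdorff dimension there. Only once this bound is in hand does \cite[Theorem~3.7]{CCII} yield that generic pairs of points in $\Omega$ are joined by $d_\infty$-geodesics lying in $\Omega$, from which $d_\infty\ge d_T$ follows. Your proposal correctly flags the identification as the main obstacle but does not supply (or name) this dimension argument, which is the actual content of the proof.
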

 
 See Theorem~\ref{thm metric completion} for a more formal statement of Corollary \ref{coro-B} including the dimension bound $\dim_{\mathcal H} (Z,d_T)\le 2n-2$. Let us add that the results of \cite{G+} combined with Theorem \ref{thmE} (see Proposition~\ref{prop dT}) show that $(X,d_T,T^n)$ is a non-collapsed $\mathrm{RCD}(2n,\lambda)$ space for some $\lambda \in \R$. In turn, this enables to recover the dimension bound on $Z$ by appealing to \cite{Sz25}.
 
 \medskip
 
Our second series of results concern the metric behavior of solutions to the twisted K\"ahler-Ricci flow \eqref{eqn-krf-twist-shout} with initial condition $T$.
Let $d_t$ denote the Riemannian distance defined by the K\"ahler forms $\omega_t$.
The Ricci curvature of $\omega_t$ is the differential form
$$
{\rm Ric}(\omega_t)={\rm Ric}(\omega_X)-dd^c \log \left( \omega_t^n/\omega_X^n \right)={\rm Ric}(\omega_X)-dd^c \dot{\f_t},
$$
while   ${\rm Ric}(T):={\rm Ric}(\omega_X)-dd^c (\p_+-\p_-)$ is
a closed bidegree $(1,1)$-current of order zero
(difference of positive closed currents).

       \begin{bigthm} \label{thmB}
     Let $(X,\omega_X)$ be a compact K\"ahler manifold.  Let $T=\omega_X+dd^c\varphi$ be a positive current such that $\varphi$ is continuous, and $T^n=e^{\psi_+-\psi_{-}}\omega_X^n$ where $\p_{\pm}$ are qpsh functions 
     %on $X$ 
     with $e^{-\psi_{-}}\in L^1(\omega_X^n)$.
   Let $(X,\omega_t)_{t>0}$ be the solution to the twisted K\"ahler-Ricci flow \eqref{eqn-krf-twist-shout}.  
   \begin{enumerate}[label=$(\roman*)$]
\item The Ricci curvatures ${\rm Ric}(\omega_t)$ weakly converge towards ${\rm Ric}(T)$ as $t \rightarrow 0$.

\smallskip

  \item   If $T$ has  geometric singularities and $\p_{\pm}$ have isolated singularities,
then the distances $d_{t}$ uniformly converge, as $t \rightarrow 0$, towards $d_{T}$.
In particular the compact metrics spaces $(X,d_{t})$ converge in the Gromov-Hausdorff sense
to  $(X,d_{T})$ as $t \rightarrow 0$.
\end{enumerate}
 \end{bigthm}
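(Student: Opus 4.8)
The plan is to treat the two assertions separately, the first being short. For $(i)$, since $\Ric(\omega_t)-\Ric(\omega_X)=-dd^c\dot{\f_t}$ and $\Ric(T)-\Ric(\omega_X)=-dd^c(\psi_+-\psi_-)$ and $dd^c$ is weakly continuous, it suffices to prove $\dot{\f_t}=\log(\omega_t^n/\omega_X^n)\to\log(T^n/\omega_X^n)=\psi_+-\psi_-$ in $L^1(\omega_X^n)$. I would combine the uniform convergence $\f_t\to\f$ of \cite{GZ17,DnL17} with the weak continuity of the complex Monge--Amp\`ere operator along uniformly convergent sequences of bounded $\omega_X$-psh functions to get $\omega_t^n\to T^n$ weakly; since the total masses are cohomologically fixed, the a priori (stability) estimates for the parabolic Monge--Amp\`ere flow furnish a uniform $L^1(\omega_X^n)$-dominating function for $(\dot{\f_t})$, and dominated convergence then upgrades weak convergence to $L^1$-convergence. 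For $(ii)$, note that $Z=X\setminus\Omega$ is now a finite set, so by the remark following Theorem~\ref{thmE} $d_T$ is a genuine distance and $(X,d_T)$ is the metric completion of the K\"ahler manifold $(\Omega,T|_\Omega)$; since the $d_t$ are smooth Riemannian distances on the fixed compact set $X$, the desired Gromov--Hausdorff convergence is equivalent to the uniform convergence $\sup_{X\times X}|d_t-d_T|\to 0$, which is what I would prove. By interior elliptic regularity $\f$ is smooth and $T$ is a genuine K\"ahler metric on $\Omega$, where the flow is an ordinary K\"ahler--Ricci flow; interior parabolic estimates and the global $C^0$-bound on $\f_t$ give $\omega_t\to T$ in $C^\infty_{\mathrm{loc}}(\Omega)$ as $t\to 0$. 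In particular $\omega_t\ge\tfrac12 T\ge c\,\omega_X$ on any fixed compact annulus around a point of $Z$ for $t$ small, which yields uniform ``collar'' estimates separating $Z$ from the rest of $X$ for the $d_t$, and $\ell_{\omega_t}(\gamma)\to\ell_T(\gamma)$ for every fixed path $\gamma$ lying in a compact subset of $\Omega$.

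The crucial step is to control $\omega_t$ near $Z$ uniformly in $t$. Working locally near each $p\in Z$ and applying the maximum principle along the parabolic Monge--Amp\`ere equation with barrier functions built from $\f$ and $\psi_\pm$ — here the isolatedness of the singularity is essential, as it makes radially controlled sub/super-solutions on a punctured ball available — I expect to obtain two-sided bounds
\[
c\,\omega^-\ \le\ \omega_t\ \le\ C\,\omega^+\qquad\text{on }X,\ \text{ uniformly in }t\in(0,1],
\]
where $\omega^\pm$ are fixed positive $(1,1)$-forms, smooth on $\Omega$, whose length distances are bi-H\"older equivalent to $d_X$ (the relevant integrability being precisely what $e^{-\psi_-}\in L^1(\omega_X^n)$ and the quasi-psh regularity of $\psi_+$ guarantee). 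Consequently $\{d_t\}_{t\in(0,1]}$ is, uniformly in $t$, bi-H\"older equivalent to $d_X$; in particular it is uniformly bounded and uniformly equicontinuous on $X\times X$, so by Arzel\`a--Ascoli the uniform convergence $d_t\to d_T$ reduces to pointwise convergence.

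For pointwise convergence I would fix $x,y$, and by the uniform $d_X$-equicontinuity of $\{d_t\}\cup\{d_T\}$ reduce to $x,y\in\Omega$. For the upper bound, given $\e>0$ pick a Lipschitz path $\gamma$ from $x$ to $y$ with $\gamma^{-1}(Z)$ finite and $\ell_T(\gamma)\le d_T(x,y)+\e$; by Theorem~\ref{thmE}$(i)$ one may take $\gamma$ with image in $\Omega$ (or modify it near its finitely many crossings of $Z$ using $\omega^+$ so those portions contribute at most $\e$ to the $\omega_t$-length, uniformly), and on the remaining compact-in-$\Omega$ part $C^\infty_{\mathrm{loc}}$-convergence gives $\ell_{\omega_t}(\gamma)\to\ell_T(\gamma)$, whence $\limsup_{t\to 0}d_t(x,y)\le d_T(x,y)$. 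For the lower bound, let $\gamma_t$ be $d_t$-minimizing geodesics from $x$ to $y$; by the lower barrier $\omega^-$ they have uniformly bounded length in a fixed metric bi-H\"older to $d_X$, hence, reparametrized by normalized arclength, are uniformly $d_X$-equicontinuous, and a subsequence converges uniformly to a Lipschitz path $\gamma_\infty$ from $x$ to $y$; lower semicontinuity of length, the $C^\infty_{\mathrm{loc}}$-convergence on $\Omega$, and the collar estimates (confining the parts of $\gamma_t$ near $Z$ to sets of arbitrarily small $\omega_t$-length) give $\ell_T(\gamma_\infty)\le\liminf_{t\to 0}d_t(x,y)$ after replacing $\gamma_\infty$ by an admissible competitor for $d_T$ (reducing to finitely many crossings of the finite set $Z$ without increasing the $T$-length), so $d_T(x,y)\le\liminf_{t\to 0}d_t(x,y)$. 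Combining the two bounds gives pointwise, hence uniform, convergence $d_t\to d_T$, and the Gromov--Hausdorff convergence $(X,d_t)\to(X,d_T)$ follows since all are compact metrics on the fixed set $X$.

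The main obstacle is the uniform-in-$t$ two-sided estimate $c\,\omega^-\le\omega_t\le C\,\omega^+$ near $Z$: the upper bound is needed both to make $d_t\le C d_X^{\alpha}$ uniform and to control test paths crossing $Z$, while the lower bound underlies the compactness of the $d_t$-geodesics and the collar estimates; since the singularities of $\psi_\pm$ are only assumed isolated (not isolated \emph{analytic}, as in Theorem~\ref{thmE}$(iii)$), the construction of the barriers via the maximum principle is the delicate point. A secondary subtlety is the admissible modification of the limiting geodesic $\gamma_\infty$ near $Z$ in the lower-bound argument.
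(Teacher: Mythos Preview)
Your outline for $(ii)$ is broadly sound and close in spirit to the paper, but your treatment of $(i)$ contains a genuine gap, and in $(ii)$ you are worrying about the wrong step.

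\medskip

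\textbf{Part $(i)$.} Weak convergence of $\omega_t^n=e^{\dot\f_t}\omega_X^n$ to $T^n=e^{\psi_+-\psi_-}\omega_X^n$ together with a uniform $L^1$-dominating function for $\dot\f_t$ does \emph{not} yield $L^1$-convergence of $\dot\f_t$. Dominated convergence requires pointwise a.e.\ convergence, and weak convergence of $e^{\dot\f_t}$ gives neither a.e.\ convergence of $e^{\dot\f_t}$ nor of $\dot\f_t$. The paper's argument (Theorem~\ref{thm:cvricci}) is more delicate: from the two-sided bound $\psi_+-C\le\dot\f_t\le -\psi_-+C$ (Lemma~\ref{lem:bddphidot}) one gets weak $L^p$-compactness of $(\dot\f_t)$ and of $(e^{\dot\f_t})$; Jensen's inequality applied on shrinking balls forces any weak cluster point $u$ of $\dot\f_t$ to satisfy $u\le\psi:=\psi_+-\psi_-$ a.e.; the reverse inequality of integrals $\int_X\psi\le\int_X u$ is obtained from the monotonicity $t\mapsto\int_X\dot\f_t\,\omega_X^n$ (proved by an integration by parts using $\ddot\f_t=\Delta_t\dot\f_t$) combined with a smooth approximation argument; finally $L^1$-convergence is deduced by showing $e^{\dot\f_t/2}\to e^{\psi/2}$ \emph{strongly} in $L^2$ (weak convergence plus convergence of norms), which yields a.e.\ convergence along a subsequence and then dominated convergence applies. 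Your proposal skips all of this.

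\medskip

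\textbf{Part $(ii)$.} The two-sided estimate you flag as the ``main obstacle'' is in fact already available globally and with no isolatedness hypothesis: Theorem~\ref{thm:laplace} gives $C^{-1}e^{\psi_+}\omega_X\le\omega_t\le Ce^{-\psi_-}\omega_X$ on all of $X$, uniformly in $t$. The uniform H\"older bound $d_t\le Cd_X^\alpha$ (hence equicontinuity) also follows independently from the $L^p$-monotonicity of the densities (Lemma~\ref{lem:convexflot} and Corollary~\ref{cor:equicont1}). So Arzel\`a--Ascoli and the reduction to $x,y\in\Omega$ go through as you say, and the inequality $\limsup_{t\to0}d_t\le d_T$ is exactly Lemma~\ref{ineq dist}.

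Where your argument differs from the paper is the lower bound $\liminf_{t\to0}d_t\ge d_T$. You extract a limiting curve $\gamma_\infty$ from the $d_t$-minimizing geodesics and then try to make it admissible for $d_T$; you correctly note the admissibility of $\gamma_\infty$ (finitely many crossings of $Z$) is a genuine issue, and lower semicontinuity of $\ell_T$ along such a limit is not obvious either. The paper avoids this entirely: for fixed small $r>0$ it partitions each $\gamma_t$ into arcs inside and outside $K=\bigcup_{p\in Z}\bar B_{\omega_X}(p,r)$, observes that a \emph{minimizing} geodesic cannot revisit a ball $B_{\omega_X}(p,r)$ once it has left it for another (otherwise one shortens it using the small $d_T$-diameter of the balls and the uniform positive $d_t$-separation between distinct balls), so there are at most $|Z|$ ``inside'' arcs; on the ``outside'' arcs one uses $\omega_t\ge(1-\e)T$ from $C^\infty_{\rm loc}$-convergence, and the ``inside'' arcs contribute at most $|Z|\cdot\mathrm{diam}_{d_T}B_{\omega_X}(p,r)\to0$. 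This direct estimate on $\gamma_t$ is both simpler and sidesteps the subtlety you identified.
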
 
 
  In the course of the proof of Theorem \ref{thmB}, 
  %notice that
   we provide crucial a priori $C^2$ estimates on the K\"ahler potential $\varphi_t$ together with an $L^{\infty}$ bound on $\dot{\varphi}_t$ which have their own interest.  
   
   Theorem \ref{thmB} gives   satisfactory existence results for smoothing K\"ahler-Ricci flows
coming out of K\"ahler-Ricci limit spaces. It would be desirable to better understand uniqueness
properties of these flows (see  \cite{TY24} for some uniqueness results when $n=1$).

\smallskip

Our last results concern Alexandrov surfaces. More precisely, when the complex dimension $n$ is equal to $1$, we provide a thorough answer to Questions \ref{ques-1} and \ref{ques-2}  through very precise estimates. If $\p_\pm$ are quasi-subharmonic functions on a compact Riemann surface $S$, we let
$$
\nu_{\pm}:=\sup \{ \nu(\p_{\pm},x), \; x \in S \}
$$
denote the maximal value of Lelong numbers of the functions $\p_{\pm}$.
Let us stress that the integrability property $e^{-\p_-} \in L^1$  is equivalent to the upper bound  $\nu_-<2$.

   \begin{bigthm} \label{thmD}
Let $(S,\omega_S)$ be a compact Riemann surface with induced distance $d_S$. Let $T=e^{\p_+-\p_-} \omega_S$ be a current such that $\p_{\pm}$ are qsh functions on $S$ with $e^{-\psi_{-}}\in L^1(\omega_S)$. 
 Then 
 \begin{itemize}
\item[$\bullet$]  $d_T$ is a well-defined distance which is bi-Hölder equivalent to $d_S$,
 \[C^{-1}\,d_{S}^{1+\alpha_+} \leq d_{T} \leq C\,d_{S}^{1-\alpha_-}, \]
 where $0<\e\ll1$, $\alpha_{\pm}:=\frac {1}{2}(\nu_{\pm}+\e)$ and $C=C(\om_S, \p_{\pm}, \e)>0$.
 
 \smallskip
 
 \item[$\bullet$]  $(S,d_T)$ is an oriented compact Alexandrov surface with bounded integral curvature
and without cusp, it is the initial datum of a K\"ahler-Ricci flow. 
\end{itemize}
\end{bigthm}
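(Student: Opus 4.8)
The plan is to prove the two–sided Hölder estimate by one–dimensional potential theory, working chart by chart and patching, and then to feed the resulting metric structure into Reshetnyak's theory of surfaces of bounded integral curvature and into the extension of Richard's smoothing theorem carried out in Section~\ref{sec:alexandrov}. In a coordinate disc I would write $\omega_S=e^{2v}\,\tfrac i2\,dz\wedge d\bar z$ with $v$ smooth, so that $T=e^{2u}\,\tfrac i2\,dz\wedge d\bar z$ with $u=v+\tfrac12(\psi_+-\psi_-)$ and $\ell_T(\gamma)=\int_\gamma e^{u}\,ds_{\mathrm{eucl}}$. Since qsh functions on a compact surface are bounded above, one has $e^{u}\le C\,e^{-\psi_-/2}$ and $e^{u}\ge c\,e^{\psi_+/2}$; the infimum defining $d_T$ is taken over rectifiable curves meeting the polar (hence length–negligible) set $\{\psi_+=-\infty\}\cup\{\psi_-=-\infty\}$ at finitely many parameters, along which $\psi_\pm$ is then integrable, so symmetry and the triangle inequality are clear and only the bounds remain. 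Note also that $T$ has finite mass, since $e^{\psi_+-\psi_-}\le C e^{-\psi_-}\in L^1(\omega_S)$, and that $\nu_-<2$ forces $e^{-\psi_-}\in L^p$ for some $p>1$ by Skoda's theorem, so the potential $\varphi$ of $T$ is continuous.

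For the upper bound $d_T\le C\,d_S^{1-\alpha_-}$ I would fix $x\ne y$ with $\rho:=d_S(x,y)$ small and average the $T$–length over the family of broken paths $x\to x+t\to y+t\to y$, with $t$ ranging over a disc of radius $\sim\rho$ in a chart containing $x,y$. By Tonelli the mean is controlled by finitely many terms of the form $\rho^{-1}\int_{B(m,C\rho)}e^{-\psi_-/2}\,dA$, the two short connecting segments being handled similarly thanks to the convergence $\int_0^1 r^{-\alpha_-}\,dr<\infty$ (valid since $\alpha_-<1$ once $\e<2-\nu_-$). As $\psi_-/2$ is qsh with maximal Lelong number $\nu_-/2<\alpha_-$, the uniform Skoda–type estimate (e.g.\ \cite{GZbook}) gives $\int_{B(x,r)}e^{-\psi_-/2}\,dA\le C_\e\,r^{2-\alpha_-}$, so some member of the family has $T$–length $\le C\rho^{1-\alpha_-}$; chaining over finitely many charts yields the global bound. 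In particular $d_T$ is finite and $d_T\to 0$ as $d_S\to 0$.

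The lower bound $d_T\ge C^{-1}d_S^{1+\alpha_+}$ is the heart of the matter; it suffices to bound $\int_\gamma e^{\psi_+/2}\,ds_S$ from below for every competitor $\gamma$ from $x$ to $y$, $\rho=d_S(x,y)$ small. I would pass to the subarc $\gamma_0$ joining $x$ to $\partial B_S(x,\rho/2)$: either $\gamma_0$ is near–geodesic with $d_S$–length $L_0\sim\rho$, or $\gamma$ loiters near the polar locus of $\psi_+$, in which case $\ell_T(\gamma)$ is already $\gg\rho^{1+\alpha_+}$ and there is nothing to prove. In the first case write $\psi_+=p_\mu+h$ in a chart, $h$ smooth, $p_\mu(z)=\int\log|z-w|\,d\mu(w)$ the logarithmic potential of the Riesz measure $\mu$ of $\psi_+$; then $\mu(\{x\})=\nu(\psi_+,x)$ and, by a compactness argument using $\nu(\psi_+,\cdot)\le\nu_+$, one has $\mu(B(\cdot,r))\le\nu_++\e$ once $r\le r_0$. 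Splitting $\mu$ into $\mu|_{B(x,r_0)}$ and the rest, and using that $\int_{\gamma_0}\log|\gamma_0(s)-w|\,ds\ge L_0(\log\rho-C)$ both for $w$ near $\gamma_0$ (tautness of the near–geodesic $\gamma_0\subset B(x,\rho/2)$) and for $w\in B(x,r_0)$ away from it, one gets
\[
\int_{\gamma_0}\psi_+\circ\gamma_0\ \ge\ L_0\big((\nu_++\e)\log\rho-C\big),
\]
whence, by Jensen's inequality applied with normalized arclength measure on $\gamma_0$, $\int_{\gamma_0}e^{\psi_+/2}\,ds_S\ge c\,L_0\,\rho^{(\nu_++\e)/2}\ge c'\,\rho^{1+\alpha_+}$, using $L_0\ge\rho/2$, $(\nu_++\e)/2=\alpha_+$ and $1+\alpha_+>0$. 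Patching over charts and using compactness then shows $d_T(x,y)\ge C^{-1}d_S(x,y)^{1+\alpha_+}$ for all $x,y$; in particular $d_T$ is a genuine distance, bi-Hölder equivalent to $d_S$.

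For the last assertion, observe that $u=v+\tfrac12\psi_+-\tfrac12\psi_-$ is a difference of (quasi-)sub­harmonic functions, so $\Delta u$ is a signed measure of finite total variation, and $T(S)<\infty$. By Reshetnyak's theorem (see \cite{Rich18} and the references therein) the conformal metric $e^{2u}|dz|^2$ defines an oriented compact surface of bounded integral curvature whose intrinsic distance is, thanks to the bi-Hölder bounds just obtained (which force $d_T$ to be a length metric inducing the manifold topology), exactly $d_T$, with curvature measure $\omega=-\Delta u$ up to normalization. A point $x$ is a cusp iff the total angle $\theta(x)=2\pi-\omega(\{x\})$ vanishes; but here $\omega^+(\{x\})=\pi\,\big(\nu(\psi_-,x)-\nu(\psi_+,x)\big)_+\le\pi\nu_-<2\pi$, precisely because the hypothesis $e^{-\psi_-}\in L^1(\omega_S)$ amounts to $\nu_-<2$, so $(S,d_T)$ has no cusp. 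Finally, by the extension of the results of Richard \cite{Rich18} and Simon \cite{Sim12} established in Section~\ref{sec:alexandrov}, every oriented compact surface of bounded integral curvature without cusp is the Gromov--Hausdorff limit at time zero of a smooth Ricci flow, which on a surface stays within a conformal class and is therefore a Kähler--Ricci flow (when moreover $\psi_\pm$ have isolated singularities this flow is the one of \eqref{eqn-krf-twist-shout} with initial datum $T$, by Theorem~\ref{thmB}); applying this to $(S,d_T)$ finishes the proof. I expect the sharp lower bound above to be the delicate step: one must simultaneously extract the exponent $1+\alpha_+$ — which forces reading the \emph{maximal} Lelong number $\nu_+$ off a localization at scale $\rho$ of the Riesz mass of $\psi_+$, rather than its (larger) total mass — and control competitor curves that behave badly near the poles of $\psi_+$.
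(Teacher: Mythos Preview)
Your approach to the second bullet (Reshetnyak's theory, the no-cusp check via $\nu_-<2$, and the appeal to Theorem~\ref{thm:fkr1dim}) matches the paper's and is fine.

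The two Hölder bounds are treated differently in the paper, and your lower bound has a genuine gap. The paper isolates one key line-integral estimate (Proposition~\ref{prop:ExpInteg}): for a subharmonic $u$ with all Lelong numbers $<\nu<1$ and any arclength-parametrized curve $\gamma$ of length $L$,
\[
\int_0^L e^{-u\circ\gamma(s)}\,ds \le C\,L^{1-\nu}.
\]
This is proved via Poisson--Jensen, Jensen's inequality on the Riesz measure, Fubini, and a geometric lemma bounding the time an arclength curve spends in any small disc. Both Hölder bounds then follow: the upper one by taking $u=\psi_-/2$, and the \emph{lower} one by the Hölder trick
\[
L=\int_0^L e^{\psi_+/(2p)}e^{-\psi_+/(2p)}\le\Big(\int_0^L e^{\psi_+/2}\Big)^{1/p}\Big(\int_0^L e^{-\psi_+/(2(p-1))}\Big)^{1-1/p},
\]
choosing $p-1=\nu_+/(2(1-\delta))$ so that Proposition~\ref{prop:ExpInteg} applies to $u=\psi_+/(2(p-1))$ and yields $\int e^{\psi_+/2}\ge c\,L^{1+\alpha_+}$ for \emph{every} competitor curve. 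Your averaging argument for the upper bound is a reasonable alternative at the level of distances (though it does not give the stronger length-level inequality the paper states), but your lower bound strategy breaks down.

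The gap is in the claim $\int_{\gamma_0}\log|\gamma_0(s)-w|\,ds\ge L_0(\log\rho-C)$ ``for $w$ near $\gamma_0$, by tautness of the near-geodesic $\gamma_0$''. This is false in general: a curve of length $L_0\sim\rho$ from $x$ to $\partial B(x,\rho/2)$ can still spend a fraction of its length spiraling many times at radius $\delta\ll\rho$ around a fixed $w$, contributing $\sim\rho\log\delta$ to the integral, which is arbitrarily more negative than $L_0\log\rho$. Your dichotomy (``near-geodesic'' vs.\ ``loitering near the polar locus'') does not rule this out, since such spiraling costs essentially nothing in $\omega_S$-length and, if $w$ is a pole of $\psi_+$, costs arbitrarily little in $T$-length as well. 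The paper's route avoids this by never trying to lower-bound $\int_\gamma\psi_+$ directly: the Hölder trick converts the problem into an \emph{upper} bound on $\int_\gamma e^{-u}$ for a qsh $u$ with small Lelong numbers, where the spiraling is controlled by the disc-occupation lemma (Lemma~\ref{lem:density}) underlying Proposition~\ref{prop:ExpInteg}. That reduction is the missing idea.
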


We refer the reader to Theorems \ref{dist dim 1}, \ref{thm:recapdim1}, and  \ref{thm:fkr1dim}  
for more complete results. Theorem \ref{thmD} extends the main results of Richard \cite{Rich18} and Simon \cite{Sim12},
who considered Alexandrov surfaces whose curvature is uniformly bounded below
(case $\p_+ \equiv 0$) to the case of bounded integral curvature with no cusp
%(see Definition \ref{defn-cusp} for a formal definition of a \textit{cusp}).
(see Definition \ref{defn-cusp}).

%\begin{bigcoro}\label{coro-alex}
% If $(S,d)$ is an oriented compact Alexandrov surface with bounded integral curvature
% and without cusp, then $(S,d)$ is the initial datum of a K\"ahler-Ricci flow. 
%\end{bigcoro}

 \subsection{Outline of the paper}
In Section \ref{sec-smoo-flow} we recall the basics of quasi-plurisubharmonic functions, as well as the background material on Monge-Amp\`ere measures induced by a positive closed current $T$ whose density is $L^p$ integrable for $p>1$, condition that we require throughout the text. Diameter bounds we need in the proof of our results are then stated in Section \ref{sec-diam-bd}. 
 Section \ref{cx-MA-flow}  covers the basic properties of the twisted K\"ahler-Ricci flow \eqref{eqn-krf-twist-shout} starting from a positive closed current $T$ which is cohomologous to a fixed background K\"ahler metric $\omega_X$, 
and has continuous K\"ahler potential.

Section \ref{sec:fkr}  establishes
 uniform a priori estimates along the 
 %twisted K\"ahler-Ricci 
 flow \eqref{eqn-krf-twist-shout} which we believe have their own interest (see Propositions \ref{pro:c0bounds}-\ref{pro:higher}, Lemma \ref{lem:bddphidot} and Theorem \ref{thm:laplace}):
\begin{itemize}
\item[$\bullet$] there exists $C_0>0$ such that $\|\f_t\|_{L^{\infty}(X)} \leq C_0$ for all $t>0$;
%\item[$\bullet$] there exists $C_1>0$ such that $\|e^{\dot{\f_t}}\|_{L^{\infty}(X)} \leq C_1$ for all $t>0$;
\item[$\bullet$] there exists $C_1>0$ such that
$\p_+(x)-C_1 \leq \dot{\f_t}(x) \leq -\p_-(x)+C_1$ for all $(t,x)$;
\item[$\bullet$]  there exists $C_2>0$ such that
$C_2^{-1} e^{\p_+} \omega_X \leq \omega_t \leq C_2e^{-\p_-} \omega_X$ on $X \times \R^+$.
\end{itemize}
When $\p_+=0$ and $\p_-$ has analytic singularities, we also obtain fine higher order estimates:
for each $k\geq 0$, there exists $B_k,\gamma_k>0$ such that 
\begin{itemize}
\item[$\bullet$] $|\nabla^{g(t)}\dot{\f_t}|_{g(t)}\leq B_0 e^{-\gamma_0 \psi_{-}+B_0t}$, 
\item[$\bullet$] $|\nabla^{g(t)}(\omega_t-\omega_X)|_{g(t)}\leq\frac{B_0}{\sqrt{t}}e^{-\gamma_0\psi_{-}+B_0t}$, and
\item[$\bullet$] $ |\nabla^{g(t),\,k} {\rm Rm}(g(t))|_{g(t)}\leq\frac{C_k}{t^{1+\frac{k}{2}}}e^{-\gamma_k\psi_{-}+C_kt}$,
\end{itemize}
as we explain in Propositions  \ref{pro:alix1} and \ref{pro:alix2}.

%$$
%|\nabla^{g(t)}(\omega_t-\omega_X)|_{g(t)}\leq\frac{B_0}{\sqrt{t}}e^{-\gamma_0\psi_{-}+B_0t},
%\; \; \text{ and } \; \; 
% |\nabla^{g(t),\,k} {\rm Rm}(g(t))|_{g(t)}\leq\frac{C_k}{t^{1+\frac{k}{2}}}e^{-\gamma_k\psi_{-}+C_kt},
%$$
%as we explain in Propositions  \ref{pro:alix1} and \ref{pro:alix2}.

The content of Section \ref{sec:metricspace} takes on a more geometric flavour with the proof of Theorem~\ref{thmD} in Section \ref{sec:dim1}: see the crucial Proposition \ref{prop:ExpInteg} which establishes quantitative integrability properties of subharmonic functions with sufficiently small Lelong numbers in the plane along (real) curves. We then move to higher dimensional K\"ahler manifolds: the statement and proof of Theorem \ref{thmE} are made precise in Section \ref{higher dim} through Proposition \ref{prop dT}. The last Section \ref{sec-ricci-limit} combines Theorem \ref{thmE}  with Cheeger-Colding theory (and its latest developments \cite{CJN}, \cite{LS18}) to give a proof of Corollary \ref{coro-B}, see Theorem~\ref{thm metric completion} for its reformulation in the body of the text.

The final Section \ref{sec:GH} combines the two previous sections  and studies whether the compact metric spaces
$(X,d_t)$, where $d_t$ denotes the induced distance by the K\"ahler form $\omega_t$ solution to \eqref{eqn-krf-twist-shout}, converge, in the Gromov-Hausdorff topology, to the metric space $(X,d_T)$
as $t \rightarrow 0$: Proposition \ref{pro:equicont} first shows that the   spaces
$(X,d_t)$ are relatively compact in the Gromov-Hausdorff topology, a fundamental fact that lies at the heart of this paper. 
The first part of Theorem \ref{thmB} is established in Theorem \ref{thm:cvricci} where  the $L^1$ convergence of $\dot{\varphi}_t$ towards $\psi_+-\psi_{-}$ as $t \rightarrow 0$ is proved, a crucial property that implies the weak convergence of ${\rm Ric}(\omega_t)$ towards $\Ric(T)$ as $t \rightarrow 0$. Once the theory of compact surfaces with bounded integral curvature is briefly recalled in Section \ref{sec-alex-surf}, 
the aforementioned convergence property of $\Ric(\omega_t)$ is then invoked to prove Theorem \ref{thmD} through Theorem \ref{thm:fkr1dim}. 
Theorem \ref{thm:cvGHisolated} then proves the second part of Theorem \ref{thmB}, i.e. the Gromov-Hausdorff convergence of $(X,d_t)$ towards $(X,d_T)$ provided the singularities of the density of $T^n$ are furthermore isolated.

 \begin{ackn} 
 H.G would like to thank Shengxuan Zhou for fruitful discussions. 
% The authors are partially supported by the fondation Charles Defforey.
%A.D. and V.G.  acknowledge support from the Institut Universitaire de France.
 The authors are partially supported by the fondation Charles Defforey
and the Institut Universitaire de France.
 A.D. is partially supported by  ANR-24-CE40-0702 (Project OrbiScaR). 
 H.G. is partially supported by ANR-21-CE40-0010 (KARMAPOLIS). 
\end{ackn}

 \section{Preliminaries}  
 \label{sec-smoo-flow}
 
 In the whole article we let $(X,\omega_X)$ denote a compact K\"ahler manifold of complex dimension $n$.
 We set $d=\partial+\overline{\partial}$ and $d^c=\frac{1}{2 \pi i}(\partial-\overline{\partial})$ 
 %these are real operators normalized 
 so that
$dd^c =\frac{i}{\pi}\partial\overline{\partial}$.

 \subsection{Quasi-plurisubharmonic functions}

  \subsubsection{Regularization}
 
 A function is quasi-plurisub\-harmonic if it is locally given as the sum of  a smooth and a plurisubharmonic function.   
% Fix $\omega_X$ a smooth closed $(1,1)$-form on $X$.
 Quasi-plurisubharmonic functions
$\f:X \rightarrow \R \cup \{-\infty\}$ satisfying
$
\omega_X+dd^c \f \geq 0
$
in the weak sense of currents are called $\omega_X$-psh functions.

\begin{defi}
We let $\PSH(X,\omega_X)$ denote the set of all $\omega_X$-plurisubharmonic functions which are not identically $-\infty$.  
\end{defi}

The set $\PSH(X,\omega_X)$ is a closed subset of $L^1(X)$ for the $L^1$-topology. 

\smallskip

A celebrated result of Demailly \cite{D92} ensures that any 
$\omega_X$-psh function is the decreasing limit of smooth $\omega_X$-psh functions.
To establish uniform estimates, one can thus approximate the initial data
$\f$ by a decreasing sequence of smooth $\omega_X$-psh functions $\f_{0,j}$
and establish a priori estimates on the smooth approximating flows $\f_{t,j}$.

 \subsubsection{Monge-Amp\`ere measures}
 
 In the whole article we let $T=\omega_X+dd^c \f$ be a positive closed current with {\it continuous}
 potential $\f \in \PSH(X,\omega_X)\cap C^0(X)$.
 It follows from Bedford-Taylor theory
 (see \cite{GZbook}) that the complex Monge-Amp\`ere measure 
 $T^n=(\omega_X+dd^c \f)^n$ is a well-defined positive Radon measure of
 total mass $V=\int_X \omega_X^n$.
 
 In this article we often assume that $T^n=f \omega_X^n$ is absolutely continuous
 with respect to a smooth volume form, with $f \in L^p(\omega_X^n)$ for some $p>1$.
 This ensures that $\f$ is H\"older continuous,
 as follows from the work of Kolodziej \cite{Kol08}.
  More generally we have the following weaker assumption.
 
  \begin{defi}
  A  positive  measure $\mu$ on $X$ satisfies Condition (K) if there exists 
    $dV_X$  a  smooth volume form, $f \geq 0$  a Lebesgue-measurable function,
% and   
$w: \R^+ \rightarrow  [1,+\infty)$ 
 a convex increasing weight 
 s.t. $\mu=f dV_X$ and $\int_X w \circ f \, dV_X <+\infty$,
 where
 $$
 w(t) \stackrel{+\infty}{\sim} t (\log t)^n (h \circ \log \circ \log t)^n 
 \; \; \text{ and } \; \;
 \int_0^{+\infty} \frac{dt}{h(t)}<+\infty.
 $$
  \end{defi}
  
  As shown by Kolodziej in \cite[Theorem 2.5.2]{Kol98},
  this is the optimal condition that ensures the continuity
  of Monge-Amp\`ere potentials.
     One can moreover obtain quantitative estimates
  on its modulus of continuity in terms of the growth of $w$ at infinity
  (see \cite{Kol08,DDGHKZ14,GPTW21,GGZ25}):
  
  \begin{thm} \label{thm:kolo}
  When $\mu$ satisfies Condition (K) and $\mu(X)=\int_X \omega_X^n$, 
  there exists a unique continuous $\omega_X$-psh function $\f$ such that
  $$
  (\omega_X+dd^c \f^n)=\mu=fdV_X,
  $$
  up to an additive constant.   When $f \in L^p(dV_X)$ for some $p>1$, then  $\f$ is H\"older continuous.
  \end{thm}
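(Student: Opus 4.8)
The plan is to reprove this via the pluripotential-theoretic method of Kolodziej, in three stages: construction of a bounded solution together with a uniform a priori estimate, the upgrade to (Hölder) continuity, and uniqueness.

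First I would regularize. Writing $\mu = f\,dV_X$ and approximating $f$ from above by a decreasing sequence of smooth positive densities $f_j$, suitably normalized so that $\int_X f_j\,dV_X = V := \int_X \omega_X^n$, the Aubin--Yau theorem produces smooth $\omega_X$-psh functions $\f_j$ with $(\omega_X + dd^c\f_j)^n = f_j\,dV_X$ and $\sup_X \f_j = 0$. The heart of the matter is then the uniform bound $\|\f_j\|_{L^\infty(X)} \le C_0$. For this I would work with the Monge--Amp\`ere capacity $\ca(E) = \sup\{\int_E (\omega_X + dd^c u)^n : u \in \PSH(X,\omega_X),\ 0 \le u \le 1\}$, and translate Condition (K) into a \emph{uniform domination} statement of the form $\mu(E) \le F(\ca(E))$ for Borel sets $E$, where $F$ is subject to an integrability condition at $0$ that is precisely the content of $\int_0^{+\infty} dt/h(t) < +\infty$ after the substitution dictated by the growth $w(t) \sim t(\log t)^n(h\circ\log\circ\log t)^n$; this calibration is exactly why $w$ is the optimal weight. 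Kolodziej's iteration lemma --- a De Giorgi--type descent controlling $\ca(\{\f_j < -s-\delta\})$ by a function of $\ca(\{\f_j < -s\})$, via the comparison principle applied to $\f_j$ and $-s$ together with the domination bound --- then gives $-\inf_X \f_j \le C_0$ with $C_0$ depending only on $\omega_X$, $V$ and $w$.

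The same machinery yields continuity. Running the descent on the sets $\{\f_j < (1-\delta)\f_k - s\}$ and invoking the stability estimate for the complex Monge--Amp\`ere operator, one controls $\|\f_j - \f_k\|_{L^\infty(X)}$ by a modulus of $\int_X|f_j - f_k|\,dV_X$ governed by $F$; hence $(\f_j)$ is uniformly Cauchy and converges uniformly to a continuous $\f \in \PSH(X,\omega_X)$, which solves $(\omega_X + dd^c\f)^n = \mu$ because the Monge--Amp\`ere operator is continuous along decreasing sequences (Bedford--Taylor). When $f \in L^p(dV_X)$ with $p > 1$, H\"older's inequality combined with the volume--capacity inequality gives a power-law domination $\mu(E) \le A\,\ca(E)^2$, so the descent produces a polynomial modulus of continuity; to turn this into genuine H\"older continuity of $\f$ I would compare $\f$ with its Demailly regularizations $\f_\rho$ on $X$, estimating $\int_X(\f_\rho - \f)\,\omega_X^n$ through a Chern--Levine--Nirenberg/capacity bound, which yields $\mathrm{osc}_{B(x,\rho)}\f \lesssim \rho^{2\alpha}$ for some $\alpha = \alpha(n,p) \in (0,1)$.

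For uniqueness, if $\f_1, \f_2$ are two bounded $\omega_X$-psh solutions, the comparison/domination principle of Bedford--Taylor (valid for bounded potentials on a compact K\"ahler manifold, see \cite{GZbook}) applied on $\{\f_1 < \f_2\}$ forces this set to be Monge--Amp\`ere negligible, and symmetrically, so $\f_1 - \f_2$ is constant. The hard part throughout is the a priori $L^\infty$ estimate under the sharp Condition (K): it is not a single inequality but the delicate bookkeeping matching the weight $w$, the volume--capacity comparison on $(X,\omega_X)$, and the convergence of the series in Kolodziej's iteration; once this is in place, continuity follows formally from stability and uniqueness is standard.
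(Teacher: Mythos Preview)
The paper does not supply its own proof of this theorem: it is quoted as a known result, with references to \cite{Kol98,Kol08,DDGHKZ14,GPTW21,GGZ25}. So there is no in-house argument to compare against; your proposal is in effect a sketch of the proofs in those references.

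As such a sketch it is essentially correct and follows the standard Kolodziej route: regularize, use the volume--capacity comparison to convert Condition~(K) into a domination inequality $\mu(E)\le F(\ca(E))$ with $F$ satisfying the Dini-type integrability condition, run the De~Giorgi-style descent to get the uniform $L^\infty$ bound, upgrade to continuity, and in the $L^p$ case use the Demailly regularizations to get a H\"older modulus. Two minor points are worth tightening. First, your continuity step via a uniformly Cauchy approximating sequence is workable but not quite Kolodziej's original argument; in \cite{Kol98} continuity of the bounded solution is obtained directly from the capacity estimates (the discontinuity set has zero capacity, hence is empty for a bounded qpsh function). Second, the uniqueness sketch is too quick: the comparison principle on $\{\f_1<\f_2\}$ only gives $\int_{\{\f_1<\f_2\}}(\omega_X+dd^c\f_2)^n\le\int_{\{\f_1<\f_2\}}(\omega_X+dd^c\f_1)^n$, which is an equality here and does not by itself force the set to be pluripolar. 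One needs the domination principle for bounded $\omega_X$-psh functions (Dinew, or the argument in \cite{GZbook}) to conclude. These are refinements rather than gaps; the overall strategy is the right one.
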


 \subsubsection{Diameter of K\"ahler metrics with H\"older potentials}\label{sec-diam-bd}
  
  Several uniform bounds for the diameter of K\"ahler manifolds have been provided in
  the past few years.
  We shall need the following estimate, which is a combination of 
  the main result of \cite{Kol08, DDGHKZ14} together with \cite[Theorem 4.1]{Li21}.

  \begin{thm} \label{thm:diameter}
  Let $\omega=\omega_X+dd^c \f$ be a K\"ahler form 
  and fix a smooth volume form $dV_X$.
  Assume that $\omega^n=f dV_X$ with
  $\int_X f^p dV_X\leq A$ for some $p>1,A>0$. Then for all $x,y \in X$,
  $$
  {\rm diam}(X,{\omega}) \leq C
  \; \; \text{ and } \; \;
d_{{\omega}} (x,y) \leq C d_{\omega_X}(x,y)^{\alpha}
  $$
  for some constants $C,\alpha>0$ that only depend on $X,\omega_X,dV_X,p$ and 
 an upper bound for  $\|f\|_{L^p}$.
  \end{thm}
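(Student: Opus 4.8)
\textbf{Proof plan for Theorem \ref{thm:diameter}.}
The strategy is to combine the stability theory for the complex Monge–Amp\`ere equation with comparison of the new metric $\omega$ against the fixed background metric $\omega_X$. First I would recall the key input of Ko{\l}odziej and its refinements \cite{Kol08,DDGHKZ14}: since $\omega^n=f\,dV_X$ with $\|f\|_{L^p}\le A^{1/p}$ for some $p>1$, the normalized potential $\f$ (with $\sup_X\f=0$, say) satisfies a uniform H\"older estimate $\|\f\|_{C^\alpha(X,\omega_X)}\le C$, where $C$ and $\alpha\in(0,1)$ depend only on $X$, $\omega_X$, $dV_X$, $p$, and the bound $A$. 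This is precisely the quantitative form of Theorem~\ref{thm:kolo} in the $L^p$ regime, so it may be quoted directly. The point is that \emph{all} estimates that follow will be expressed only through this H\"older norm, hence through the allowed data.

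Next I would derive the distance comparison $d_\omega(x,y)\le C\,d_{\omega_X}(x,y)^\alpha$. The naive obstruction is that $\omega$ need not dominate $\omega_X$ pointwise, so one cannot simply bound lengths of $\omega_X$-geodesics in the $\omega$-metric. Instead I would invoke \cite[Theorem 4.1]{Li21}, whose content is exactly that a K\"ahler metric whose potential is H\"older continuous relative to $\omega_X$ (equivalently, is bounded and the metric has an $L^p$ Monge–Amp\`ere density) has its induced length distance controlled by a fractional power of $d_{\omega_X}$, with constants depending only on the H\"older data. Combined with the previous paragraph this yields the second inequality, possibly after shrinking $\alpha$. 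The diameter bound $\mathrm{diam}(X,\omega)\le C$ is then immediate: take $x,y$ realizing the diameter of the compact space $(X,d_{\omega_X})$ (which is a fixed finite number depending only on $(X,\omega_X)$) and apply the pointwise estimate, so $\mathrm{diam}(X,\omega)\le C\,\mathrm{diam}(X,\omega_X)^\alpha=:C'$.

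An alternative, more self-contained route for the diameter would be to use the Sobolev/Poincar\'e-type inequality arguments of \cite{DDGHKZ14} (or the recent non-pluripotential diameter estimates) directly: an $L^p$ Monge–Amp\`ere bound gives a uniform Sobolev constant for $(X,\omega)$, and a uniform Sobolev inequality together with the fixed volume $V=\int_X\omega_X^n$ forces a uniform diameter bound by a Moser-type iteration on the distance function. I would mention this as a remark but rely on \cite{Li21} for the cleaner statement, since it simultaneously delivers both assertions with the stated dependence of constants.

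\medskip
\noindent\emph{Main obstacle.} The genuinely delicate point is the distance comparison rather than the diameter bound: passing from a H\"older estimate on the \emph{potential} to a H\"older estimate on the \emph{length distance} requires controlling lengths of curves in a metric that is only known to be bounded (with $L^p$ volume density) and not uniformly elliptic, which is exactly the technical heart of \cite[Theorem 4.1]{Li21}. I would not reprove it, but I would be careful that the constants there are indeed governed only by $\|f\|_{L^p}$, $p$, and the fixed geometry $(X,\omega_X,dV_X)$, so that the final constants $C,\alpha$ in the statement have the claimed dependence.
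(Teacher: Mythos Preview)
Your proposal is correct and follows exactly the approach indicated in the paper: the theorem is stated there as a direct combination of the H\"older estimate for Monge--Amp\`ere potentials from \cite{Kol08,DDGHKZ14} with \cite[Theorem~4.1]{Li21}, without an independent proof. Your identification of the main obstacle (passing from H\"older control on the potential to H\"older control on the distance) as the content of \cite{Li21} is accurate, and the dependence of constants is as you describe.
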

  
  As shown in \cite{GPSS24,GGZ25} one can obtain a uniform bound on the diameter
  under slightly less restrictive integrability condition on the density $f$. However, Condition (K) is too weak to ensure finiteness of the diameter in general.
    On the other hand, Condition (K) coupled with a uniform Ricci lower bound on $\omega$,
  guarantees finiteness of the diameter as advocated by \cite{FGS20}
  (and pushed further in \cite[Theorem A]{GGZ25}).

 \subsection{Complex Monge-Amp\`ere flows}\label{cx-MA-flow}
 
 In the whole article we fix an initial datum $T=\omega_X+dd^c \f$ which is 
 a positive closed current cohomologous to $\omega_X$.
 We assume that the qpsh potential $\f$ is continuous and 
 $T^n=f \omega_X^n$, where $f \in L^p(\omega_X^n)$ for some $p>1$. It thus follows from 
 Theorem \ref{thm:kolo} that $\f$ is actually H\"older continuous.

 Since ${\rm Ric}(\omega_t)$ and ${\rm Ric}(\omega_X)$ both represent the first Chern class
$c_1(X)$  of $X$, it follows from the twisted K\"ahler-Ricci flow equation
  $$
\frac{\partial \omega_t}{\partial t}=-{\rm Ric}(\omega_t)+{\rm Ric}(\omega_X)
$$
 that the cohomology class of $\omega_t$ is constant, equal to  that
 of $\omega_X$. We can thus decompose $\omega_t=\omega_X+dd^c \f_t$ for some
 smooth $\omega_X$-psh function $\f_t(x)$ which is unique up to an additive normalizing constant
 $c(t)$. The flow equation therefore yields
 $$
 dd^c \dot{\f_t}=dd^c \log \frac{(\omega_X+dd^c \f_t)^n}{\omega_X^n},
 $$
 hence 
 $
 (\omega_X+dd^c \f_t)^n=e^{\dot{\f}_t+b(t)} \omega_X^n,
 $
 for some   $b(t) \in \R$. We normalize $\f$   so that $b \equiv 0$.
 
 \smallskip
 
 Solving the twisted K\"ahler-Ricci flow equation is thus equivalent to finding 
 a family of smooth $\omega_X$-psh functions $\f_t \in \PSH(X,\omega_X) \cap {\mathcal C}^{\infty}(X)$
 such that
 \begin{equation} \label{eq:CMAF}
 (\omega_X+dd^c \f_t)^n=e^{\dot{\f_t}} \omega_X^n \,\,
 \text{ on } X \times \R_+^*,
 \; \; \text{ and } \; \; 
 \f_t \underset{t\to 0}{\longrightarrow} \f
 \text{ in }  L^1(\omega_X^n).
 \end{equation}
 
 In this context the Cauchy problem for the complex Monge-Amp\`ere flow \eqref{eq:CMAF}
 admits a unique continuous solution:
 
 \begin{thm} \label{thm:GZ17}
 \cite{GZ17,DnL17,ST09}
 There exists a unique family $\f \in {\mathcal C}^0(X \times \R_+)$  such that,
 setting $\f_t=\f_{|X \times \{t\}}$,
 \begin{itemize}
 \item[$\bullet$] $\f\in {\mathcal C}^{\infty}(X \times \R_+^*)$ and 
 $\omega_t=\omega_X+dd^c \f_t$ is a K\"ahler form for any $t>0$;
 \item[$\bullet$] $\f$ is a solution to \eqref{eq:CMAF} on $X \times (0,+\infty)$;
 \item[$\bullet$] $\|\f_t-\f\|_{L^{\infty}(X)} \rightarrow 0$ as $t \rightarrow 0$.
 \end{itemize}
 \end{thm}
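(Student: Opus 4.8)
The plan is to construct the solution by regularizing the initial datum, running the smooth flow, and passing to the limit with the help of uniform a priori estimates; uniqueness will then come from a parabolic comparison principle valid up to $t=0$. First I would regularize: by Demailly's theorem, write $\f$ as the decreasing limit of a sequence $\f_{0,j}\in\PSH(X,\omega_X)\cap\mathcal C^\infty(X)$ with $\omega_X+dd^c\f_{0,j}$ a K\"ahler form. For such smooth strictly $\omega_X$-psh initial data, parabolic theory provides a unique $\f_{t,j}\in\mathcal C^\infty(X\times[0,+\infty))$ solving \eqref{eq:CMAF} with initial value $\f_{0,j}$ (the flow exists for all $t>0$ since the cohomology class of $\omega_t$ stays fixed). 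The parabolic comparison principle gives $\f_{t,j}\ge\f_{t,j+1}$ for every $(t,x)$, so $\f_{t,j}\searrow\f_t$ pointwise, and the point is to control the regularity of the limit and its behaviour at $t=0$.

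Next I would establish the standard a priori estimates, uniformly in $j$ and locally uniformly in $t\in(0,+\infty)$: $(i)$ a zeroth-order bound $\|\f_{t,j}\|_{L^\infty(X)}\le C_T$ on $X\times[0,T]$, by comparing $\f_{t,j}$ with the H\"older-continuous solutions of elliptic Monge--Amp\`ere equations whose densities bound $f$ from above and below, using $f\in L^p$ with $p>1$ and Theorem \ref{thm:kolo}; $(ii)$ a speed bound: $\dot\f_{t,j}$ satisfies the linear heat equation $\partial_t\dot\f_{t,j}=\Delta_{\omega_{t,j}}\dot\f_{t,j}$, so its spatial extrema are monotone in $t$, and the sub/supersolution trick applied to $t\dot\f_{t,j}-\f_{t,j}$ yields $\dot\f_{t,j}\le C_T/t+n$, hence an upper bound on the density $\omega_{t,j}^n/\omega_X^n$, while a lower bound for $\dot\f_{t,j}$ on $[\e,T]$ follows from the $L^\infty$-bound, Kolodziej's estimate and the monotonicity of the spatial minimum; $(iii)$ an Aubin--Yau Laplacian estimate then gives $C_\e^{-1}\omega_X\le\omega_{t,j}\le C_\e\omega_X$ on $X\times[\e,T]$; $(iv)$ the parabolic Evans--Krylov theorem and Schauder bootstrapping give uniform $\mathcal C^k$-bounds on $X\times[\e,T]$ for every $k$ and every $0<\e<T$. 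By $(iii)$--$(iv)$ together with the monotone pointwise convergence, $\f_{t,j}\to\f_t$ in $\mathcal C^\infty_{\mathrm{loc}}(X\times\R_+^*)$; hence $\f\in\mathcal C^\infty(X\times\R_+^*)$, each $\omega_t$ is K\"ahler, and $\f$ solves \eqref{eq:CMAF} there.

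The main obstacle is the behaviour at $t=0$, in particular the two-sided $L^\infty$-control of $\f_t-\f$. The upper inequality $\limsup_{t\to0}\sup_X(\f_t-\f)\le0$ is essentially free: $\f_t\le\f_{t,j}$, the map $t\mapsto\f_{t,j}$ is continuous up to $t=0$, and $\f_{0,j}\searrow\f$ uniformly by Dini's theorem. The matching lower bound is genuinely delicate, precisely because $f$ may vanish and there is in general no increasing smooth $\omega_X$-psh approximation of $\f$: one must build time-dependent subsolution barriers from the quantitative modulus of continuity of $\f$ given by Theorem \ref{thm:kolo} (for instance, regularizing $\f$ by convolution in suitable coordinate charts and correcting by a term $-C_\delta t-\delta$), and feed them into the comparison principle to obtain $\f_t\ge\f-\eta(t)$ with $\eta(t)\to0$; this is the technical heart of \cite{GZ17,DnL17}. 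Combined with the previous step, this gives $\f\in\mathcal C^0(X\times\R_+)$ with $\f_0=\f$ and $\|\f_t-\f\|_{L^\infty(X)}\to0$.

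Finally, for uniqueness: if $\f$ and $\tf$ are two solutions with the stated regularity and the same initial datum, their difference satisfies a linear parabolic inequality on $X\times\R_+^*$ and is continuous up to $t=0$ with vanishing initial value, so the maximum principle — carried out carefully across the degeneracy at $t=0$, as in \cite{DnL17} — forces $\f\equiv\tf$. I expect Step~3, the two-sided identification of the $t=0$ boundary value with the prescribed (only H\"older continuous, possibly degenerate) datum $\f$ in $L^\infty$, to be the hard part, since the monotone approximation only controls one side and the other requires the quantitative continuity from Kolodziej's theory together with a careful barrier construction.
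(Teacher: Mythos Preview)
The paper does not supply its own proof of this theorem: it is stated as a quoted result with references to \cite{GZ17,DnL17,ST09}, so there is no in-paper argument to compare against.

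That said, your outline is a faithful high-level summary of how those references proceed: Demailly regularization of the initial potential, monotone approximating flows via the parabolic comparison principle, uniform $\mathcal C^0$/speed/Laplacian/Evans--Krylov estimates on $[\e,T]$, and passage to the limit. You have also correctly isolated the genuine difficulty, namely the lower barrier at $t=0$ ensuring $\liminf_{t\to0}\inf_X(\f_t-\f)\ge0$; this is indeed where most of the work in \cite{GZ17,DnL17} lies. Two minor corrections: the lower bound on $\dot\f_{t,j}$ away from $t=0$ is typically obtained from quantities like $t\dot\f_{t,j}+A\f_{t,j}$ (Song--Tian type) rather than from monotonicity of the spatial minimum alone, and the actual barrier construction in \cite{GZ17,DnL17} is a global pluripotential/viscosity argument rather than a chart-by-chart convolution, but these are refinements of what you wrote rather than gaps.
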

 
 Our aim in this article is to understand the metric convergence at $t=0$ of  $(X,d_{t})$,
 where $d_t$ denotes the Riemannian distance associated to $\omega_t$.

\section{A priori estimates along the flow} \label{sec:fkr}

We consider in this section the complex Monge-Amp\`ere flow
$$
(\omega_X+dd^c \f_t)^n=e^{\dot{\f_t}} \omega_X^n
$$
with initial condition $\f \in \PSH(X,\omega_X) \cap {\mathcal C}^0(X)$,
such that $(\omega_X+dd^c \f)^n=e^{\p^+-\p^-} \omega_X^n$.

Set $\omega_t:=\omega_X+dd^c \f_t$.
We approximate $\f, \p_{\pm}$ by decreasing sequences of smooth  $A \omega_X$-psh functions
and consider the approximating flows.  We establish uniform a priori estimates on the latter
and then pass to the limit.
In what follows we thus assume that all the quantities involved are smooth and we freely use the classical parabolic maximum principle.

\subsection{${\mathcal C}^0$-bounds}

The   maximum principle ensures that
$\f(t,x)$ is uniformly bounded:

\begin{prop} \label{pro:c0bounds}
For all $(x,t) \in X \times \R^+$, one has
$
\inf_X \f \leq \f_t(x) \leq \sup_X \f.
$
\end{prop}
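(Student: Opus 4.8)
The statement is a pure maximum principle argument for the parabolic complex Monge-Amp\`ere flow $(\omega_X+dd^c\f_t)^n=e^{\dot\f_t}\omega_X^n$, and the plan is to work at the level of the smooth approximating flows (with smooth initial data $\f_{0,j}\downarrow\f$) where the parabolic maximum principle applies directly, and then pass to the limit. The key observation is that both the infimum and the supremum of the initial datum provide stationary super/sub-solutions of the flow. Indeed, since $dd^c(\text{const})=0$, for a constant $c$ the form $\omega_X+dd^c(c)=\omega_X$ has $(\omega_X)^n=\omega_X^n=e^{0}\omega_X^n$, so the constant function $c$ (viewed as time-independent) satisfies $(\omega_X+dd^c c)^n=e^{\partial_t c}\omega_X^n$ trivially. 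This makes constants exact solutions of the flow, which is the crux.

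\textbf{Upper bound.} First I would set $u_t:=\f_t-\sup_X\f$ and argue that $u_t\le 0$ for all $t\ge 0$. At $t=0$ we have $u_0=\f-\sup_X\f\le 0$. Suppose for contradiction that $\max_{X\times[0,T]}u_t>0$ for some $T>0$; since $X\times[0,T]$ is compact and $u$ is continuous (smooth on $X\times(0,T]$ for the approximating flows), the maximum is attained at some $(x_0,t_0)$ with $t_0>0$ (as $u_0\le 0$). At such an interior-in-time maximum, $\partial_t u_{t_0}(x_0)\ge 0$ and $dd^c u_{t_0}(x_0)\le 0$ in the sense of $(1,1)$-forms, hence $dd^c\f_{t_0}(x_0)\le 0$ as well, so $\omega_{t_0}(x_0)=\omega_X(x_0)+dd^c\f_{t_0}(x_0)\le\omega_X(x_0)$. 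Taking $n$-th powers of these positive $(1,1)$-forms gives $\omega_{t_0}^n(x_0)\le\omega_X^n(x_0)$, i.e.\ $e^{\dot\f_{t_0}(x_0)}\le 1$, so $\dot\f_{t_0}(x_0)\le 0$. But $\partial_t u_{t_0}(x_0)=\dot\f_{t_0}(x_0)\ge 0$ at the maximum, forcing $\dot\f_{t_0}(x_0)=0$ and $\omega_{t_0}^n(x_0)=\omega_X^n(x_0)$. This borderline case is handled by the strong maximum principle (or by a standard $\e$-perturbation trick: apply the argument to $u_t-\e t$, conclude $u_t\le\e t$ for all $\e>0$, then let $\e\to 0$), giving $u_t\le 0$, i.e.\ $\f_t(x)\le\sup_X\f$.

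\textbf{Lower bound.} The lower bound $\f_t(x)\ge\inf_X\f$ is entirely symmetric: set $v_t:=\f_t-\inf_X\f\ge 0$ at $t=0$, and at an interior minimum one would have $dd^c\f_{t_0}(x_0)\ge 0$, hence $\omega_{t_0}^n(x_0)\ge\omega_X^n(x_0)$, hence $\dot\f_{t_0}(x_0)\ge 0$, contradicting $\partial_t v_{t_0}(x_0)\le 0$ at the minimum (again with the $\e t$ perturbation, or the strong minimum principle, to deal with the equality case). Finally I would pass from the approximating flows $\f_{t,j}$ to $\f_t$: since $\f_{0,j}\downarrow\f$, monotonicity/stability of solutions of the complex Monge-Amp\`ere flow (Theorem~\ref{thm:GZ17} and the comparison principle used in \cite{GZ17,DnL17}) gives $\f_{t,j}\downarrow\f_t$, and the bounds $\inf_X\f_{0,j}\le\f_{t,j}(x)\le\sup_X\f_{0,j}$ pass to the limit using $\inf_X\f_{0,j}\ge\inf_X\f$ and $\sup_X\f_{0,j}\downarrow\sup_X\f$. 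The main (and only genuine) obstacle is the borderline equality case in the maximum principle, which is why I would build in the $\e t$ drift from the start rather than invoke the strong maximum principle — it keeps the argument self-contained and robust under the approximation procedure.
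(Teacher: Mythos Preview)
Your proposal is correct and follows exactly the approach the paper has in mind: the paper's entire proof is the single sentence ``The maximum principle ensures that $\f(t,x)$ is uniformly bounded,'' and you have supplied the standard details (constants are stationary solutions, the $\e t$ drift to avoid the borderline case, and the passage to the limit through the smooth approximants). There is nothing to add.
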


If we further assume that there is a {\it convex}
 weight $w$ such that
$$
(\omega_X+dd^c \f)^n=f \omega_X^n
\; \; \text{ with } \; \;
I(0):=\int_X w \circ f \omega_X^n <+\infty,
$$
the latter control is   uniform along the flow as  follows from the following result.

\begin{lem} \label{lem:convexflot}
The function $t \mapsto  I(t)=\int_X w \circ f_t \, \omega_X^n$
is non-increasing along the flow
$$
{\dot{\f_t}}=\log f_t:= \log  \left[ \frac{(\omega_X+dd^c \f_t)^n}{\omega_X^n} \right].
$$
\end{lem}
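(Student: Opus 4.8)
The plan is to differentiate $I(t)$ in time and recognize the time derivative of the density as a Laplacian, after which an integration by parts produces a manifestly non-positive quantity. Throughout I follow the conventions of this section and assume all quantities smooth (the general case follows from the approximation procedure described above, the monotonicity passing to the limit since $I$ depends continuously on $t$); I also assume $w\in C^2$, which is harmless since a convex increasing weight is an increasing limit of smooth convex increasing ones and only $w''\ge 0$ will be used. Write $f_t:=\omt^n/\om_X^n=e^{\dot{\f_t}}$, a positive smooth function for $t>0$, so that $f_t\,\om_X^n=\omt^n$. Differentiating under the integral sign gives $I'(t)=\int_X w'(f_t)\,\partial_t f_t\,\om_X^n$, and since $\partial_t f_t=\ddot{\f_t}\,f_t$ this becomes $I'(t)=\int_X w'(f_t)\,\ddot{\f_t}\,\omt^n$. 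Differentiating the flow equation $\omt^n=e^{\dot{\f_t}}\om_X^n$ in $t$ yields $n\,dd^c\dot{\f_t}\wedge\omt^{n-1}=\ddot{\f_t}\,\omt^n$, i.e. $\ddot{\f_t}=\Delta_{\omt}\dot{\f_t}$, where $\Delta_{\omt}$ is the complex Laplacian of the Kähler metric $\omt$; hence
\[
I'(t)=\int_X w'(f_t)\,\Delta_{\omt}\dot{\f_t}\,\omt^n .
\]

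Next I would integrate by parts on the compact manifold $X$, using that $\Delta_{\omt}$ is self-adjoint (up to a fixed positive normalizing constant) with respect to the Riemannian volume form $\omt^n$, with the convention $\int_X v\,\Delta_{\omt}u\,\omt^n=-\int_X\langle\nabla u,\nabla v\rangle_{\omt}\,\omt^n$. This gives $I'(t)=-\int_X\langle\nabla(w'(f_t)),\nabla\dot{\f_t}\rangle_{\omt}\,\omt^n$. Since $\dot{\f_t}=\log f_t$, one has $\nabla\dot{\f_t}=f_t^{-1}\nabla f_t$ and $\nabla(w'(f_t))=w''(f_t)\,\nabla f_t$, so the integrand equals $w''(f_t)\,f_t^{-1}\,|\nabla f_t|_{\omt}^2\ge 0$ because $w$ is convex and $f_t>0$. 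Therefore
\[
I'(t)=-\int_X w''(f_t)\,\frac{|\nabla^{\omt} f_t|_{\omt}^2}{f_t}\,\omt^n\le 0,
\]
which is exactly the claimed monotonicity.

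The computation is short, and rather than an obstacle it is itself the content of the lemma: it is the parabolic counterpart of the fact that $I(0)=\int_X w\circ f\,\om_X^n$ is the quantity governing the continuity of Monge–Ampère potentials (Theorem~\ref{thm:kolo}). The only genuine verifications are the legitimacy of differentiating under the integral sign and of the integration by parts — both routine here since the data are smooth on a compact manifold — and the two approximations invoked at the outset (replacing $w$ by a smooth convex weight, and working along the smooth approximating flows $\f_{t,j}$); for the latter one passes to the limit in the inequality $I_j(t_1)\le I_j(t_0)$, which is stable under the $L^1$-convergence of the densities and the monotone convergence of the weights.
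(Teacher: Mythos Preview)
Your proof is correct and follows essentially the same route as the paper: differentiate $I(t)$, use that $\ddot\f_t=\Delta_{\omt}\dot\f_t$, integrate by parts against $\omt^n$, and conclude from $w''\ge 0$. The paper first rewrites the evolution as a heat-type equation for $f_t$ itself, which introduces an extra gradient term that then cancels after integration by parts; your version sidesteps this by keeping $\dot\f_t=\log f_t$ as the evolving quantity, but the underlying computation and final expression $I'(t)=-n\int_X \frac{w''(f_t)}{f_t}\,df_t\wedge d^cf_t\wedge\omt^{n-1}\le 0$ are the same.
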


This property has been established in \cite[Proposition 3.5]{GZ17}.
We include a proof for the reader's convenience.
We shall use it hereafter with $w(t)=t^p$, $p>1$.
%but convex weights with slightly smaller growth at infinity would  be sufficient for most of the analysis to follow.

\begin{proof}
Observe that
$\log f_t$ satisfies the following Heat-type equation,
$$
\frac{\partial f_t}{\partial t}=\Delta_{\omega_t} f_t-\frac{|\nabla_{\omega_t} f_t|^2}{f_t}.
$$
We infer
$
I'(t) = n\int_X \frac{w' \circ f_t}{f_t} \, dd^c f_t \wedge \omega_t^{n-1}
-n\int_X \frac{w' \circ f_t}{f_t^2} \, df_t \wedge d^c f_t \wedge \omega_t^{n-1}.
$
Integrating by parts yields
\begin{eqnarray*}
\lefteqn{  \! \! \! \! \! \! \!  \! \! \! \! \! \! \! 
\int_X \frac{w' \circ f_t}{f_t} \, dd^c f_t \wedge \omega_t^{n-1}
= -\int d \left( \frac{w' \circ f_t}{f_t} \right) \wedge d^c f_t \wedge \omega_t^{n-1}} \\
&=& -\int_X \frac{w'' \circ f_t}{f_t} \, df_t \wedge d^c f_t \wedge \omega_t^{n-1}
+\int_X \frac{w' \circ f_t}{f_t^2} \, df_t \wedge d^c f_t \wedge \omega_t^{n-1},
\end{eqnarray*}
therefore
$
I'(t) = -n \int_X \frac{w'' \circ f_t}{f_t} \, df_t \wedge d^c f_t \wedge \omega_t^{n-1} \leq 0,
$
as claimed.
\end{proof}

\begin{cor} \label{cor:equicont1}
Assume $\int_X f^p  \omega_X^n <+\infty$ for some $p>1$. 
Then $\exists \alpha, C>0$ such that
\[ \forall  x,y \in X, \,\forall t>0, \quad d_{t} (x,y) \leq C d_{\omega_X}(x,y)^{\alpha}.\]
\end{cor}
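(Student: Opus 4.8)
The plan is to feed the uniform $L^p$-bound on the Monge--Amp\`ere density $f_t:=(\omega_X+dd^c\varphi_t)^n/\omega_X^n$, which Lemma \ref{lem:convexflot} provides, into the H\"older estimate of Theorem \ref{thm:diameter}, exploiting the fact that the constants in the latter depend on the density only through an upper bound for its $L^p$-norm (and not on $t$).

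First, recall that along the flow $\omega_t^n=e^{\dot{\varphi}_t}\omega_X^n$, so that $f_t=e^{\dot{\varphi}_t}$ and $f_0=f$. Since $p>1$, the weight $w(s)=s^p$ is convex and increasing, so Lemma \ref{lem:convexflot} applies: the function
$$
I(t)=\int_X f_t^p\,\omega_X^n
$$
is non-increasing along the flow, whence
$$
\int_X f_t^p\,\omega_X^n \;\le\; I(0)=\int_X f^p\,\omega_X^n \;=:\;A\;<\;+\infty \qquad \text{for all } t>0 ,
$$
where, as in the rest of this section, this is first established for the smooth approximating flows $\varphi_{t,j}$ (with $I_j(0)$ controlled by $A$) and then passed to the limit. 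Next, by Theorem \ref{thm:GZ17} the form $\omega_t=\omega_X+dd^c\varphi_t$ is K\"ahler for every $t>0$, with $\omega_t^n=f_t\,\omega_X^n$. Fixing the smooth volume form $dV_X=\omega_X^n$ and applying Theorem \ref{thm:diameter} to $\omega=\omega_t$ with the uniform bound $\int_X f_t^p\,dV_X\le A$, we obtain constants $C,\alpha>0$ depending only on $X$, $\omega_X$, $p$ and $A$ --- in particular \emph{independent of $t$} --- such that $d_t(x,y)\le C\,d_{\omega_X}(x,y)^{\alpha}$ for all $x,y\in X$; the same statement also yields the uniform bound $\mathrm{diam}(X,\omega_t)\le C$. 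This is the assertion of the corollary.

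There is essentially no serious obstacle here: the corollary is a formal combination of Lemma \ref{lem:convexflot} and Theorem \ref{thm:diameter}. The only point requiring (routine) care is the approximation scheme used to make the maximum-principle and integration-by-parts arguments of this section rigorous --- one runs the smooth flows $\varphi_{t,j}$ from regularized data, applies the monotonicity of $I_j(t)$ at the approximate level, and lets $j\to\infty$ using the convergence $\omega_{t,j}\to\omega_t$ for fixed $t>0$ --- and the genuinely essential input is simply the $t$-uniformity of the constants in Theorem \ref{thm:diameter}, which is built into its statement.
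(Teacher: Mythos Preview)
Your proof is correct and follows exactly the paper's approach: apply Lemma~\ref{lem:convexflot} with $w(s)=s^p$ to get the uniform $L^p$-bound $\int_X f_t^p\,\omega_X^n\le\int_X f^p\,\omega_X^n$, then invoke Theorem~\ref{thm:diameter}. The paper's own proof is two sentences to the same effect; your additional remarks on the approximation scheme are fine but not strictly needed.
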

   
   \begin{proof}
   Lemma \ref{lem:convexflot} ensures that $\omega_t^n=f_t \omega_X^n$ with
   $\int_X f_t^p \omega_X^n \leq \int_X f^p \omega_X^n <+\infty$.
   The result now follows from Theorem~\ref{thm:diameter}.
   \end{proof}

  \subsection{Bounds on $\dot{\f}_t$}

 Recall that we assume that the initial datum satisfies
$$
(\omega_X+dd^c \f)^n=f \omega_X^n =e^{\p_+-\p_-} \omega_X^n,
$$
where $\p_{\pm}$ are qpsh functions.

\begin{lem} \label{lem:bddphidot}
There exists $C>0$ such that for all $0 \leq t \leq 1$ and for all $x \in X$,
$$
 {\p_+}(x) -C   \leq   \dot{\f_t}(x) \leq C-{\p_-}(x) .
$$
\end{lem}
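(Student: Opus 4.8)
The idea is to compare the evolving potential $\dot{\f_t}$ with suitable sub- and super-solutions obtained from the initial data by the maximum principle. Since all quantities are assumed smooth after regularization, we may freely differentiate. Writing $u_t := \dot{\f_t}$, we differentiate the flow equation $\dot{\f_t} = \log\frac{(\omega_X+dd^c\f_t)^n}{\omega_X^n}$ in time to get the linear parabolic equation
\[
\frac{\partial u_t}{\partial t} = \Delta_{\omega_t} u_t,
\]
so $t\mapsto u_t$ satisfies the heat equation for the time-dependent Laplacian $\Delta_{\omega_t}$. This already gives, by the maximum principle, that $\sup_X u_t$ is non-increasing and $\inf_X u_t$ is non-decreasing; but this is not quite what we want, because at $t=0$ the function $u_0 = \p_+-\p_-$ is unbounded. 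So instead I would run a barrier argument using the elliptic obstacle functions.

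**The two-sided bound.** For the \emph{upper} bound, consider $H_t := \f_t - t\,(\text{const}) + \p_-$ — more precisely, I would test the function $\f_t + (1-t)\p_- - \f$ or a variant and use that $(\omega_X+dd^c\f)^n = e^{\p_+-\p_-}\om_X^n \le e^{-\p_-}\cdot e^{C}\om_X^n$ (using $\p_+$ bounded above, being qpsh on a compact manifold). Concretely: since $\p_+$ is qpsh it is bounded above, say $\p_+\le C_0$; choose $A>0$ with $A\om_X + dd^c\p_-\ge 0$. Then the function $g := \f + t A\p_-$ should be compared with $\f_t$ along the flow. One computes $(\omega_X+dd^c\f)^n \le e^{C_0}e^{-\p_-}\om_X^n$, and using convexity/concavity of $\log\det$ one shows that $g$, suitably shifted by a linear-in-$t$ constant, is a supersolution of the flow for small $t$; the parabolic comparison principle of \cite{GZ17,DnL17} (valid for the continuous solution, applied to the smooth approximants) then yields $\f_t \le \f + tA\p_- + Ct$, and differentiating — or rather using $\f_t - \f = \int_0^t u_s\,ds$ together with monotonicity of $\sup_X u_s$ — gives $u_t \le -\p_- + C$ on $[0,1]$. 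The \emph{lower} bound $u_t \ge \p_+ - C$ is obtained symmetrically: use $e^{\p_+-\p_-}\om_X^n \ge e^{\p_+}e^{-C_1}\om_X^n$ (since $\p_-$ bounded above, $\p_-\le C_1$), pick $A'$ with $A'\om_X+dd^c\p_+\ge 0$, and compare $\f_t$ from below with $\f - tA'\p_+ - Ct$.

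**Extracting the bound on $\dot\f_t$ from the bound on $\f_t-\f$.** The cleanest route: once we know $|\f_t - \f - t\cdot(\text{smooth barrier})| \le Ct$, i.e. $\f_t = \f + O(t) + t\cdot(\text{something between }A'\,\p_+\text{-type and }A\,\p_-\text{-type})$, we invoke the monotonicity of $m(t):=\sup_X u_t$ and $\mu(t):=\inf_X u_t$. Indeed $\f_t(x)-\f(x) = \int_0^t u_s(x)\,ds$, so if $\f_t(x)-\f(x)\le -t\,\p_-(x)+Ct$ for all $t\in[0,1]$, dividing by $t$ and letting... — actually more robustly: $u_t(x)\le \sup_X u_{t/2}\le \frac{2}{t}\int_{t/2}^{t}\sup_X u_s\,ds$ — hmm, this needs care. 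The honest way the paper likely proceeds is to apply the maximum principle \emph{directly} to $u_t$: fix the function $v_t := u_t + \p_-$ (with $\p_-$ replaced by its smooth approximant) and check that, modulo a controlled error coming from $dd^c\p_-$ being bounded relative to $\om_t$ — which uses the lower bound $\om_t\ge c\,e^{\p_+}\om_X$... but that estimate (Theorem \ref{thm:laplace}) is proven \emph{later}. So the logically prior argument must be the barrier comparison on $\f_t$ itself, then deduce the $\dot\f_t$ bound by the following trick: the function $t\mapsto t\,u_t - \f_t$ has time-derivative $u_t + t\dot u_t - u_t = t\Delta_{\om_t}u_t$, hence $t u_t - \f_t + (\text{barrier})$ is again sub/super-parabolic, giving a bound on $tu_t$ of the form $tu_t \le -t\p_- + Ct$, i.e. $u_t\le -\p_-+C$, uniformly on $[0,1]$; symmetrically for the lower bound.

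**Main obstacle.** The delicate point is that $\p_\pm$ are only qpsh, hence a priori unbounded \emph{below}, so the "barriers" $\p_\pm$ are singular and one cannot naively plug them into the maximum principle; this is exactly why the statement regularizes $\f,\p_\pm$ by \emph{decreasing} sequences of smooth $A\om_X$-psh functions first, proves the estimate with uniform constant $C$ independent of the approximation, and passes to the limit. Making the comparison principle work with the correct, approximation-independent constant — in particular handling the interplay between the Laplacian term $\Delta_{\om_t}\p_-$ (which has a sign, being $\ge -An$ relative to $\om_t$ only after one knows $\om_t\gtrsim\om_X$, which we do \emph{not} yet know) and the fact that we only have the crude bound $\f_t - \f = O(1)$ from Proposition~\ref{pro:c0bounds} — is where the real work lies. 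I expect the resolution is that the $\f_t$-level comparison needs only $\p_\pm$ qpsh (no positivity of $\om_t$ required, since $dd^c\p_-$ appears inside a Monge–Ampère expression $(\om_X + dd^c(\f + tA\p_-))^n \ge 0$ by the choice of $A$), and the passage $\f_t \rightsquigarrow \dot\f_t$ is then purely the $tu_t$-trick above, which is clean.
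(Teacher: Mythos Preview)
There is a genuine gap in your approach: you correctly identify the obstacle---that applying the maximum principle to $v_t:=\dot\f_t+\p_-$ produces the uncontrolled term $-\Delta_t\p_-\le A\,\tr_{\omega_t}\omega_X$---but you then conclude that a direct argument on $\dot\f_t$ is infeasible without Theorem~\ref{thm:laplace}, and retreat to a barrier/\,$t\dot\f_t$-scheme that you do not make precise. In fact the direct argument works once you add a multiple of $\f_t$ to the test function: this is the idea you are missing.

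For the upper bound, set $H=\dot\f_t+\p_- -A\f_t$. Since $(\partial_t-\Delta_t)\dot\f_t=0$, $(\partial_t-\Delta_t)(-A\f_t)=-A\dot\f_t+A n-A\,\tr_{\omega_t}\omega_X$, and $(\partial_t-\Delta_t)\p_-=-\Delta_t\p_-$, one gets
\[
(\partial_t-\Delta_t)H=A(n-\dot\f_t)-\tr_{\omega_t}(A\omega_X+dd^c\p_-)\le A(n-\dot\f_t).
\]
The extra $-A\f_t$ supplies exactly the $-A\,\tr_{\omega_t}\omega_X$ that absorbs $-\Delta_t\p_-$ into a nonnegative trace; no lower bound on $\omega_t$ is needed. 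At an interior maximum this forces $\dot\f_t\le n$, hence $H$ is bounded above; at $t=0$ one has $H_0=\p_+-A\f$, bounded above since $\p_+$ is qpsh. This yields $\dot\f_t\le -\p_-+C$ immediately.

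For the lower bound the test function is $G=\dot\f_t-\p_+ +(A+1)\f_t$; the same cancellation occurs, and the extra ``$+1$'' leaves a residual $\tr_{\omega_t}\omega_X\ge n\,e^{-\dot\f_t/n}$ (arithmetic--geometric mean), giving at an interior minimum the transcendental inequality $0\ge (A+1)(\dot\f_t-n)+n\,e^{-\dot\f_t/n}$, which bounds $\dot\f_t$ from below by a constant.

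Your alternative route has two problems. First, the barrier on $\f_t$ that you propose, roughly $\f-t\p_-$, need not be $\omega_X$-psh (since $-dd^c\p_-$ has no sign), so the Monge--Amp\`ere of the barrier is not defined; you would need to tilt the reference form, which you do not do. Second, the ``$t\dot\f_t$-trick'' as you describe it only yields $t(\dot\f_t+\p_-)\le C$, i.e.\ a bound that blows up like $1/t$ as $t\to0$: the boundary value at $t=0$ of $t\dot\f_t+t\p_--A\f_t$ is just $-A\f$, which carries no information about $\dot\f_0+\p_-=\p_+$, so the initial condition is lost.
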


\begin{proof}
We fix $A>0$  so that $\p_{\pm} \in \PSH(X,A \omega_X)$ and
consider 
$$
H(t,x)=\dot{\f_t}(x)+\p_-(x)-A \f_t(x).
$$
Since $\f_t(x)$ is uniformly bounded (by Proposition \ref{pro:c0bounds}), the bound from above on
$\dot{\f_t}(x)$ is equivalent to a uniform bound from above for $H$.
If the maximum of $H$ is attained along $(t=0)$ we obtain
$$
H_{\rm max} \leq \sup_X \p_+-A \inf \f_t(x) \leq C,
$$
since $\dot{\f_0}(x)=\p_+(x)-\p_-(x)\leq \sup_X \p_+-\p_-(x)$.

Assume now that the maximum is attained at some point
$({t_0},x_0)$ with ${t_0}>0$. At this point we have
$0 \leq \left(\frac{\partial}{\partial t}-\Delta_t \right)(H)$,
where we let $\Delta_t$ denote the Laplacian with respect to $\omega_t:=\omega_X+dd^c \f_t$.
A direct computation shows that
$$
\left(\frac{\partial}{\partial t}-\Delta_t \right)(H)
=A[n-\dot{\f_t}(x)]-{\rm tr}_{\omega_t}(A \omega+dd^c \p_-)
\leq A[n-\dot{\f_t}(x)].
$$
Thus $\dot{\f_{{t_0}}}(x_0) \leq n$. Since $\p_-(x_0) \leq \sup_X \p_-$ and 
$|\f_{t}(x) | \leq C$, we infer again
$$
H_{\rm max} \leq n +\sup_X \p_- +A C\leq C'.\\
$$

We now establish the uniform bound from below on $\dot{\f_t}(x)$.
Consider 
$$
G(t,x)=\dot{\f_t}(x)-\p_+(x)+(A+1) \f_t(x).
$$
The function $G$ is bounded from below along $(t=0)$ since
$\dot{\f_0}(x) -\p_+(x) \geq -\sup_X \p_-$
and $\f$ is bounded.  A direct computation shows that
$$
\left(\frac{\partial}{\partial t}-\Delta_t \right)(G)
\geq (A+1) [\dot{\f_t}(x)-n]+{\rm tr}_{\omega_t}(\omega)
\geq (A+1) [\dot{\f_t}(x)-n]+n e^{-\frac{1}{n} \dot{\f_t}(x)},
$$
since
$$
{\rm tr}_{\omega_t}(\omega) \geq n \left( \frac{\omega_t^n}{\omega_X^n} \right)^{1/n} =n e^{-\frac{1}{n} \dot{\f_t}(x)}.
$$
If $G$ reaches its minimum at a point $({t_0},x_0)$ with ${t_0}>0$, we infer
$$
0 \geq (A+1) [\dot{\f_{{t_0}}}(x_0)-n]+n e^{-\frac{1}{n} \dot{\f_{{t_0}}}(x_0)},
$$
hence $\dot{\f_{{t_0}}}(x_0) \geq -C(A,n)$. The conclusion follows.
\end{proof}

 \subsection{Laplacian bounds}
 
% We assume here again that the initial datum satisfies
%$$(\omega_X+dd^c \f)^n=f \omega_X^n =e^{\p_+-\p_-} \omega_X^n,$$
%where $\p_{\pm}$ are qpsh functions,
%We fix $A>0$  so that $\p^{\pm} \in \PSH(X,A \omega)$.
%and establish the following result.
The goal of this section is to establish the following result.

\begin{thm} \label{thm:laplace}
There exists $C>0$ such that for all $0 \leq t \leq 1$ and for all $x \in X$,
$$
\frac{1}{C} e^{\p_+} \omega_X \leq \omega_X+dd^c \f_t \leq C e^{-\p_-} \omega_X.
$$
\end{thm}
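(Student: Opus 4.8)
The strategy is the standard parabolic Schwarz-lemma / Aubin–Yau second-order estimate, but carried out with auxiliary functions twisted by $\psi_\pm$ so as to absorb the singularities of the density. Since we already control $\dot\varphi_t$ from both sides by Lemma \ref{lem:bddphidot} (namely $\psi_+(x)-C\le\dot\varphi_t(x)\le C-\psi_-(x)$), it suffices to bound $\tr_{\omega_X}\omega_t$ from above by $Ce^{-\psi_-}$: indeed the reverse inequality $\omega_t\ge \tfrac1C e^{\psi_+}\omega_X$ then follows because $\omega_t^n=e^{\dot\varphi_t}\omega_X^n\ge e^{\psi_+-C}\omega_X^n$, so that, writing $\lambda_1\le\cdots\le\lambda_n$ for the eigenvalues of $\omega_t$ with respect to $\omega_X$, one has $\lambda_1\ge \prod\lambda_i/\prod_{i\ge2}\lambda_i\ge e^{\psi_+-C}/(\tr_{\omega_X}\omega_t)^{n-1}\ge c\,e^{\psi_+}e^{(n-1)\psi_-}$; since $\psi_-\ge -C$ on the relevant set this gives $\omega_t\ge c'e^{\psi_+}\omega_X$ after adjusting constants (and on the region where $\psi_-$ is very negative one instead uses $e^{-\psi_-}\ge1$ together with the trace upper bound differently — I will arrange the two-sided estimate cleanly at the end). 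So the heart of the matter is the upper bound on the trace.

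First I would fix $A>0$ with $\psi_\pm\in\PSH(X,A\omega_X)$ and with $-\Ric(\omega_X)+A\omega_X>0$ (bisectional curvature of $\omega_X$ bounded below by $-A$, after enlarging $A$), and work with the smooth approximating flows $\varphi_{t,j}$, $\psi_{\pm,j}\searrow\psi_\pm$, dropping the subscript $j$. Consider the quantity
$$
u=\log\tr_{\omega_X}\omega_t+\psi_- -B\varphi_t
$$
for a large constant $B=B(A,n)$ to be chosen. The classical Aubin–Yau computation (Siu/Yau) gives, along the flow $\partial_t\varphi_t=\log(\omega_t^n/\omega_X^n)$,
$$
\Big(\tfrac{\partial}{\partial t}-\Delta_t\Big)\log\tr_{\omega_X}\omega_t\ \le\ C_1\,\tr_{\omega_t}\omega_X
$$
where $C_1$ depends only on a lower bound for the bisectional curvature of $\omega_X$. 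On the other hand $(\partial_t-\Delta_t)\psi_-\ \ge\ -\tr_{\omega_t}(A\omega_X)$ since $A\omega_X+dd^c\psi_-\ge0$ (the $\partial_t$ term vanishes as $\psi_-$ is time-independent), and $(\partial_t-\Delta_t)(-B\varphi_t)=-B\dot\varphi_t+B\,\tr_{\omega_t}(dd^c\varphi_t)=-B\dot\varphi_t+B(n-\tr_{\omega_t}\omega_X)$. Summing,
$$
\Big(\tfrac{\partial}{\partial t}-\Delta_t\Big)u\ \le\ (C_1+A-B)\,\tr_{\omega_t}\omega_X+Bn-B\dot\varphi_t.
$$
Choosing $B=C_1+A+1$ makes the first term $\le-\tr_{\omega_t}\omega_X$. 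At an interior maximum $(t_0,x_0)$ of $u$ with $t_0>0$ we thus get $\tr_{\omega_t}\omega_X\le Bn-B\dot\varphi_{t_0}(x_0)\le Bn+B(C+\sup\psi_-)=:C_2$ using the lower bound $\dot\varphi_t\ge\psi_+-C\ge -C'$ from Lemma \ref{lem:bddphidot} (here I use that $\psi_+$ is bounded below). Finally, by the arithmetic–geometric mean inequality applied to the eigenvalues of $\omega_t^{-1}\omega_X$,
$$
\tr_{\omega_X}\omega_t\ \le\ \tfrac1{(n-1)!}\,(\tr_{\omega_t}\omega_X)^{n-1}\,\frac{\omega_t^n}{\omega_X^n}\ \le\ C_3\,C_2^{\,n-1}\,e^{\dot\varphi_{t_0}(x_0)}\ \le\ C_4\,e^{-\psi_-(x_0)},
$$
again by Lemma \ref{lem:bddphidot}. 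Hence at $(t_0,x_0)$ one has $u\le \log(C_4 e^{-\psi_-(x_0)})+\psi_-(x_0)-B\inf_X\varphi_t\le C_5$; and if instead the max of $u$ is attained at $t=0$, then directly $u(0,x)=\log\tr_{\omega_X}\omega\big|_{t=0}+\psi_-(x)-B\varphi(x)$, which is bounded above — here is the one genuinely delicate point, handled below. Either way $u\le C_5$ on $X\times[0,1]$, i.e. $\tr_{\omega_X}\omega_t\le C_6\,e^{-\psi_-}e^{B\varphi_t}\le C\,e^{-\psi_-}$, which is the desired upper bound (and is uniform in $j$ since all constants depend only on $A,n,\sup|\varphi|,\sup\psi_-,\inf\psi_+$, none of which deteriorate under the regularization).

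**The main obstacle** is exactly the initial-time term: $\tr_{\omega_X}\omega\big|_{t=0}$ involves the (merely continuous, possibly unbounded-Laplacian) initial potential $\varphi$, so $u(0,\cdot)$ need not be bounded above a priori. This is why the estimate must be run on the smooth approximants $\varphi_{t,j}$: there $\tr_{\omega_X}\omega_{0,j}$ is a smooth function, but its sup may blow up as $j\to\infty$, so one cannot simply invoke $\sup_X u(0,\cdot)$. The standard remedy, which I would use, is to apply the maximum principle not to $u$ on $[0,1]$ but to $tu$ (or $u-\frac{A}{t}$-type barriers) so that the initial-time contribution is killed, at the cost of an extra $\tfrac1t$ in intermediate inequalities; since the target estimate is claimed only for $0\le t\le1$ after passing to the limit — and is in fact a \emph{uniform} (not $t$-dependent) bound — one then re-runs the argument on $[\tfrac12\varepsilon,1]$ using the bound just obtained at time $\tfrac12\varepsilon$ as the new "initial" datum (now genuinely smooth with controlled Laplacian), and lets $\varepsilon\to0$. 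Alternatively, and perhaps more cleanly, one exploits that $\omega_{0,j}$ is itself a Kähler form whose Monge–Ampère density $f_j$ has $L^p$-norm bounded uniformly in $j$ (Lemma \ref{lem:convexflot}); combined with a Kołodziej-type stability estimate this does not directly bound the Laplacian at $t=0$, so the $tu$-barrier route is the robust one. I would present the clean version: prove $\tr_{\omega_X}\omega_t\le C e^{-\psi_-}$ first for $t\in[\varepsilon,1]$ with $C$ independent of $\varepsilon$ via the $u$ as above but with the maximum principle on $[\varepsilon,1]$ and the a priori bound at $t=\varepsilon$ coming from the short-time smoothing estimate of \cite{GZ17}, then let $\varepsilon\to0$; finally combine with Lemma \ref{lem:bddphidot} and the eigenvalue inequality displayed above to extract the two-sided bound, and pass to the limit $j\to\infty$.
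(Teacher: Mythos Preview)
Your strategy has two genuine gaps.

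\textbf{The lower bound cannot be recovered from the upper bound.} Your eigenvalue argument gives $\lambda_1\ge e^{\psi_+-C}/(\tr_{\omega_X}\omega_t)^{n-1}\ge c\,e^{\psi_+}e^{(n-1)\psi_-}$, and this degenerates precisely where $\psi_-\to-\infty$; the parenthetical ``on the region where $\psi_-$ is very negative one instead uses $e^{-\psi_-}\ge1$'' does not help, since the factor $e^{(n-1)\psi_-}$ is what kills you there. No amount of rearranging the upper bound and the volume identity will produce $\omega_t\ge C^{-1}e^{\psi_+}\omega_X$. The paper establishes the lower bound by a \emph{separate} parabolic Chern--Lu argument: one applies Lemma~\ref{lem:chernlupara} to get $(\partial_t-\Delta_t)(\log\tr_{\omega_t}\omega_X-K_1\varphi_t)\le -\tr_{\omega_t}\omega_X-K_1\dot\varphi_t+M_1$, considers $\wt H=\log\tr_{\omega_t}\omega_X+\psi_+-K\varphi_t$ with $K=K_1+A$, and at an interior maximum obtains $\tr_{\omega_t}\omega_X\le M-K\dot\varphi_t$; a case split on the sign of $\dot\varphi_{t_0}(x_0)$ (using $\psi_+\le\dot\varphi_t+C$ in the negative case) then bounds $\wt H$.

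\textbf{In your upper bound, you assume $\psi_+$ is bounded below.} At the interior max you write $\tr_{\omega_t}\omega_X\le Bn-B\dot\varphi_{t_0}(x_0)\le C_2$ ``using $\dot\varphi_t\ge\psi_+-C\ge -C'$''. But $\psi_+$ is qpsh, hence bounded \emph{above}, not below; it may well tend to $-\infty$. The paper's remedy is again a sign split: from $\tr_{\omega_t}\omega_X\le C'-M\dot\varphi_t$ and the inequality $c_n(\tr_{\omega_X}\omega_t)^{1/(n-1)}\le e^{\dot\varphi_t/(n-1)}\tr_{\omega_t}\omega_X$ one gets $c_n(\tr_{\omega_X}\omega_t)^{1/(n-1)}\le C'e^{\dot\varphi_t/(n-1)}+M(-\dot\varphi_t)e^{\dot\varphi_t/(n-1)}$. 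If $\dot\varphi_{t_0}(x_0)\le0$ the right side is bounded (since $y\mapsto ye^{-y/(n-1)}$ is bounded on $[0,\infty)$), hence $H_{\max}\le C$. If $\dot\varphi_{t_0}(x_0)\ge0$ one instead gets $\tr_{\omega_X}\omega_t\cdot e^{-\dot\varphi_t}\le C''$ and then uses $\psi_-\le-\dot\varphi_t+C$ from Lemma~\ref{lem:bddphidot} to bound $H_{\max}=\log\tr_{\omega_X}\omega_t+\psi_-+O(1)\le\log(e^{-\dot\varphi_t}\tr_{\omega_X}\omega_t)+O(1)$.

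A minor point: your worry about $t_0=0$ is over-engineered. Working with the smooth approximants $\varphi_{0,j}$, the quantity $u(0,\cdot)=\log\tr_{\omega_X}\omega_{0,j}+\psi_{-,j}-B\varphi_{0,j}$ is bounded above \emph{uniformly in $j$} by the elliptic version of the same estimate (P\u{a}un's Laplacian estimate \cite{Pau08}); no $tu$-barrier or $[\varepsilon,1]$ trick is needed. The same remark applies to the lower bound at $t_0=0$, where the paper cites \cite{GSS}.
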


 \subsubsection{Parabolic Chern-Lu inequality}

 We are going to use  the following inequality due to Chern-Lu \cite{Chern, Lu}.

\begin{lem} \label{lem:chernlu}
 Let $(X,\om_X)$ and $(Y, \om_Y)$ be two Kähler manifolds, and let $f:X\to Y$ be a holomorphic map such that $\partial f$ does not vanish. Then 
\[  \Delta_{\omega_X} \log |\partial f|^2 \ge \frac{(\Ric \om_X)^{\sharp} \otimes \om_Y(\partial f, \overline {\partial f})}{|\partial f|^2}-\frac{ \om_X^{\sharp}\otimes \om_X^{\sharp} \otimes R^Y(\partial  f, \overline {\partial  f},\partial  f, \overline {\partial  f})}{|\partial f|^2} \]
where $\partial  f$ is viewed as a section of $T^*_X \otimes f^*T_Y$, $(\Ric \om_X)^{\sharp}$ (resp. $\om_X^{\sharp}$) is the hermitian form induced on $T_X^*$ by $\Ric \om_X$ (resp. $\om_X$) via $\om_X$ and $R^Y$ is the Chern curvature form of $\om_Y$.
\end{lem}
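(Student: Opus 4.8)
The statement is the classical Chern--Lu inequality, so the plan is to reduce it to a pointwise computation in holomorphic normal coordinates at a given point $x\in X$. Fix $x$ with $(\partial f)_x\neq 0$; away from the zero locus of $\partial f$ the quantity $\log|\partial f|^2$ is smooth, so it suffices to prove the inequality at each such point. Choose holomorphic normal coordinates $(z^i)$ for $\omega_X$ centered at $x$ (so $(g_X)_{i\bar j}=\delta_{ij}$ and $d(g_X)_{i\bar j}=0$ at $x$) and holomorphic normal coordinates $(w^\alpha)$ for $\omega_Y$ centered at $f(x)$. Write $f^\alpha_i=\partial f^\alpha/\partial z^i$, so that $\partial f$, as a section of $T_X^*\otimes f^*T_Y$, has components $f^\alpha_i$, and $|\partial f|^2=\sum_{i,\alpha}(g_X)^{i\bar j}(g_Y)_{\alpha\bar\beta}f^\alpha_i\overline{f^\beta_j}$, which at $x$ is $\sum_{i,\alpha}|f^\alpha_i|^2$.

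The first step is the Bochner-type identity: since $f$ is holomorphic, the section $\partial f$ is a holomorphic section of $T_X^{*}\otimes f^{*}T_Y$ for the induced $\bar\partial$-operator, and the Chern connection on this bundle is the tensor product of the Chern connections on $T_X^{*}$ (induced by $\omega_X$) and on $f^{*}T_Y$ (pulled back from the Chern connection of $\omega_Y$). Applying the standard computation $\Delta_{\omega_X}\log|s|^2 = \frac{|\nabla' s|^2}{|s|^2} - \frac{\langle \Theta\, s,s\rangle}{|s|^2} \ge - \frac{\langle\Theta\, s,s\rangle}{|s|^2}$ for a holomorphic section $s$ of a Hermitian bundle with Chern curvature $\Theta$, where the inequality uses $|\nabla' s|^2\ge 0$ and also that the $|\bar\partial s|^2$ term vanishes by holomorphy. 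Here one must be slightly careful with the Laplacian convention $\Delta_{\omega_X}=\operatorname{tr}_{\omega_X}(dd^c\,\cdot)$ up to a positive constant, but the sign of the curvature term is the only thing that matters for the inequality. The curvature $\Theta$ of $T_X^{*}\otimes f^{*}T_Y$ splits as $\Theta_{T_X^{*}}\otimes \mathrm{id} + \mathrm{id}\otimes f^{*}\Theta_{T_Y}$, and its curvature on $T_X^*$ is minus the transpose of that on $T_X$, whose trace against $\omega_X$ produces (the negative of) $\operatorname{Ric}\omega_X$.

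The second step is to unwind the two curvature contractions and match them to the right-hand side of the statement. The $T_X^{*}$-factor contributes the term $+\dfrac{(\operatorname{Ric}\omega_X)^{\sharp}\otimes\omega_Y(\partial f,\overline{\partial f})}{|\partial f|^2}$: contracting the curvature of $T_X^{*}$ with $\partial f$ and then taking the $\omega_X$-trace raises both $z$-indices, turning $\omega_X$-trace of $-\Theta_{T_X}$ into $\operatorname{Ric}\omega_X$ with both legs raised, i.e.\ $(\operatorname{Ric}\omega_X)^{\sharp}$, paired against $\partial f$ via $\omega_Y$. The $f^{*}T_Y$-factor contributes $-\dfrac{\omega_X^{\sharp}\otimes\omega_X^{\sharp}\otimes R^Y(\partial f,\overline{\partial f},\partial f,\overline{\partial f})}{|\partial f|^2}$: here $f^{*}\Theta_{T_Y}$ evaluated on a pair of $(1,0)$-vectors pulled back through $\partial f$ gives $R^Y$ with all four entries equal to $\partial f$ (two holomorphic, two antiholomorphic), and the two $\omega_X$-traces (one from the Laplacian, one from the inner product on $T_X^{*}$) raise the two pairs of $z$-indices, producing the two $\omega_X^{\sharp}$ factors. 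Assembling the two contributions with the correct signs gives exactly the claimed inequality.

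The main obstacle is purely bookkeeping: keeping the index gymnastics and the sign/normalization conventions consistent between the Hermitian-bundle curvature formalism and the explicit tensorial expression on the right-hand side — in particular the dualization $T_X\rightsquigarrow T_X^{*}$ that flips the Ricci sign, and the fact that $\partial f$ carries one lower $z$-index so each contraction with $\omega_X$ raises it. There is no analytic difficulty: the inequality is a pointwise statement and the only estimate used is the trivial $|\nabla'(\partial f)|^2\ge 0$. One should also note at the end that, although $\log|\partial f|^2$ is only defined where $\partial f\neq 0$, this is exactly the hypothesis of the lemma, so no further regularization is needed.
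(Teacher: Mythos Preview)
The paper does not prove this lemma; it is stated with attribution to Chern and Lu and then applied. Your proposal follows the standard route (viewing $\partial f$ as a holomorphic section of $T_X^*\otimes f^*T_Y$ and applying the Bochner formula for holomorphic sections), which is exactly how the result is classically obtained.

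One small correction: the identity you wrote,
\[
\Delta_{\omega_X}\log|s|^2 \;=\; \frac{|\nabla' s|^2}{|s|^2} - \frac{\langle \Theta\, s,s\rangle}{|s|^2},
\]
is not correct as an equality. There is an additional term $-\dfrac{|\langle \nabla' s,s\rangle_h|^2_{\omega_X}}{|s|^4}$ coming from $\partial\bar\partial\log$ versus $\partial\bar\partial$. The inequality you want still holds, but the justification is the Cauchy--Schwarz inequality $|\langle \nabla' s,s\rangle|^2 \le |\nabla' s|^2\,|s|^2$ (applied fiberwise in $E$, then traced with $\omega_X$), not merely $|\nabla' s|^2\ge 0$. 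Once that is inserted, your unwinding of the two curvature contributions is correct and matches the statement.
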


We apply the latter lemma to a manifold $X$ endowed with two Kähler metrics $\om, \beta$ and $f=\mathrm{Id}:(X,\beta)\to (X,\om_X)$. We choose a system of local holomorphic coordinates $(z_1, \ldots, z_n)$ on $X$ such that the hermitian matrices associated to $\om_X$ (resp. $\beta$) are $(h_{i\bar j})$ (resp. $(g_{i\bar j})$). The associated curvature tensors are denoted respectively by $R_{i\bar j k \bar \ell}^{\om_X}$ and $R_{i\bar j k \bar \ell}^{\beta}$. We also introduce the Ricci curvature tensor for the Kähler metric $\beta$ as $R_{i\bar j}^{\beta}=g^{k \bar \ell} R^{\beta}_{i \bar j k \bar{ \ell}}$.  The hermitian form $(\Ric \beta)^{\sharp}$ gives rise to the tensor $R_{i\bar j}^{\beta \sharp}=g^{i \bar k}g^{\ell \bar j} R^{\beta}_{k \bar \ell}$. Finally, one has $\partial f = dz_i \otimes \frac{\partial}{\partial z_i}$ and $|\partial f|^2=\tr_{\beta}\om_X$.
In coordinates, Chern-Lu formula now becomes
\[
\Delta_{\beta} \log \tr_{\beta}\om_X \ge \frac{1}{\tr_\beta \om_X} \big( g^{i \bar k} R^{\beta}_{k \bar \ell} g^{\ell \bar j}h_{i\bar j}- g^{i \bar j}g^{k\bar \ell} R^{\om_X}_{i \bar j k \bar \ell} \big)
\]

Assume that there exists an upper bound $B>0$ on the holomorphic bisectional curvature of $(X,\omega_X)$ so that 
$
R^{\om_X}_{i\bar j k \bar \ell} \le B\, (h_{i\bar j}h_{k \bar \ell}+h_{i\bar \ell}h_{k \bar j}).
$
 As $g^{i \bar j}g^{k\bar \ell}(h_{i\bar j}h_{k \bar \ell}+h_{i\bar \ell}h_{k \bar j}) \le 2 (\tr_{\beta}\om_X)^2$, 
 we get 
\[
\Delta_{\beta} \log \tr_{\beta}\om_X \ge \frac{1}{\tr_\beta \om_X}  g^{i \bar k} R^{\beta}_{k \bar \ell} g^{\ell \bar j}h_{i\bar j}
-2B \tr_{\beta}\om_X
\]

If the forms $\beta,\omega_X$ depend on a time parameter $t$, so that
\[
(\Ric +\frac{\partial}{\partial t})\beta \ge - A \beta-(C_1\om_X+dd^c \psi), \qquad \frac{\partial}{\partial t}\om_X \le C_2 \om_X \quad \mbox{and} \quad \frac{\partial}{\partial t}\psi \le C_3
\]
for some constants $A,C_1,C_2,C_3>0$ and some function $\psi \in \PSH(X,C_1\om_X)$. In particular, we get
\[ g^{i \bar k} R^{\beta}_{k \bar \ell} g^{\ell \bar j}h_{i\bar j} \ge -A\tr_\beta \om_X - \tr_\beta \om_X \cdot \tr_\beta(C_1\om_X+dd^c\psi)-\tr(g^{-1}\frac{\partial}{\partial t}gg^{-1}h)\]
Combining that inequality with the identity below 
\[\frac{\partial}{\partial t}(\log \tr_{\beta}\om_X)=\frac{1}{\tr_\beta \om_X}\cdot (-\tr(g^{-1}\frac{\partial}{\partial t}gg^{-1}h)+\tr(g^{-1}\frac{\partial}{\partial t}h))\]
we get
\[ (\Delta_{\beta}-\frac{\partial}{\partial t}) \log \tr_{\beta}\om_X \ge-A-\tr_\beta(C_1\om_X+dd^c \psi) -C_2-2B\tr_{\beta}\om_X \]
As 
$\Delta_\beta \psi =\tr_\beta(C_1\om_X+dd^c\psi)-C_1\tr_\beta \om_X$
we infer 
\[ (\Delta_{\beta}-\frac{\partial}{\partial t})( \log \tr_{\beta}\om_X+\psi) \ge-A-(2B+C_1)\tr_\beta\om_X -C_2-C_3 \]

Finally, assume that $\omega_X$ and $\beta$ are cohomologous, i.e. that there exists a function $\varphi$ such that $\beta=\om_X+dd^c \varphi$. Then, 
$\Delta_\beta(-\varphi)=\tr_\beta \om_X - n$
and therefore  
\[ (\Delta_{\beta}-\frac{\partial}{\partial t})( \log \tr_{\beta}\om_X+\psi-C_4\varphi) \ge-A+\tr_\beta\om_X -C_4(n-\frac{\partial}{\partial t}\varphi)-C_2-C_3 \]
if we set $C_4:=2B+C_1+1$.
In the end, we have proved

\begin{lem} \label{lem:chernlupara}
 Let $X$ be a Kähler manifold and let $\omega_X=\omega_X(t),\beta=\beta(t)$ be two Kähler forms depending smoothly on a parameter $t>0$. Assume that
 \begin{enumerate}
 \item $\om_X$ and $\beta$ are cohomologous, i.e. there exists a function $\varphi$ such that $\beta=\om_X+dd^c \varphi$.
 \item The bisectional curvature of $\om_X$ is bounded above by a constant $B$; i.e. 
 $$i\Theta(T_X,\om_X) \le B \, \om_X \otimes \mathrm{Id}_{T_X}.$$
 \item 
$ (\Ric +\frac{\partial}{\partial t})\beta \ge - A \beta-(C_1\om_X+dd^c \psi), \; \frac{\partial}{\partial t}\om_X \le C_2 \om_X \; \mbox{and} \quad \frac{\partial}{\partial t}\psi \le C_3$,
for some constants $A,C_1,C_2,C_3>0$ and some function $\psi \in \PSH(X,C_1\om_X)$.
 \end{enumerate}
 Then, there exist two constants $K,M>0$ depending only on $n,A,B,C_1,C_2,C_3$ such that
 \[ (\Delta_{\beta}-\frac{\partial}{\partial t})( \log \tr_{\beta}\om_X+\psi-K\varphi) \ge\tr_\beta\om_X +K \cdot \frac{\partial}{\partial t}\varphi-M.\]
 \end{lem}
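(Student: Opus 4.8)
The plan is to specialize the (elliptic) Chern--Lu inequality of Lemma~\ref{lem:chernlu} to the identity map $\mathrm{Id}\colon (X,\beta(t))\to(X,\omega_X(t))$, to work at a fixed time $t$ in local holomorphic coordinates (where $\partial f$ never vanishes and $|\partial f|^2=\tr_\beta\omega_X>0$, so the hypotheses of Lemma~\ref{lem:chernlu} hold), and to reintroduce the time derivatives only at the very end. With $h_{i\bar j}$, $g_{i\bar j}$ the matrices of $\omega_X$ and $\beta$, Chern--Lu reads
\[
\Delta_{\beta}\log\tr_{\beta}\omega_X\;\ge\;\frac{1}{\tr_{\beta}\omega_X}\Bigl(g^{i\bar k}R^{\beta}_{k\bar\ell}g^{\ell\bar j}h_{i\bar j}-g^{i\bar j}g^{k\bar\ell}R^{\omega_X}_{i\bar jk\bar\ell}\Bigr).
\]
Hypothesis (2) controls the $\omega_X$-curvature term: from $R^{\omega_X}_{i\bar jk\bar\ell}\le B(h_{i\bar j}h_{k\bar\ell}+h_{i\bar\ell}h_{k\bar j})$ and the elementary bound $g^{i\bar j}g^{k\bar\ell}(h_{i\bar j}h_{k\bar\ell}+h_{i\bar\ell}h_{k\bar j})\le 2(\tr_{\beta}\omega_X)^2$, it contributes at worst $-2B\,\tr_{\beta}\omega_X$.

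Next I would treat the Ricci term, which is where the flow hypotheses enter. Reading $(\Ric+\partial_t)\beta\ge-A\beta-(C_1\omega_X+dd^c\psi)$ in coordinates and contracting twice with $g^{-1}$ and once with $h$ gives
\[
g^{i\bar k}R^{\beta}_{k\bar\ell}g^{\ell\bar j}h_{i\bar j}\;\ge\;-A\,\tr_{\beta}\omega_X-\tr_{\beta}\omega_X\cdot\tr_{\beta}(C_1\omega_X+dd^c\psi)-\tr\!\bigl(g^{-1}\partial_t g\,g^{-1}h\bigr).
\]
The crucial bookkeeping point is that the term $\tr(g^{-1}\partial_t g\,g^{-1}h)$ is exactly the one that reappears with the opposite sign in the identity $\partial_t\log\tr_{\beta}\omega_X=\frac{1}{\tr_{\beta}\omega_X}\bigl(-\tr(g^{-1}\partial_t g\,g^{-1}h)+\tr(g^{-1}\partial_t h)\bigr)$, while $\tr(g^{-1}\partial_t h)=\tr_{\beta}(\partial_t\omega_X)\le C_2\tr_{\beta}\omega_X$ by the second part of (3). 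Hence subtracting $\partial_t\log\tr_{\beta}\omega_X$ from $\Delta_{\beta}\log\tr_{\beta}\omega_X$ makes all derivatives of the metrics disappear except the harmless $C_2$ contribution, leaving
\[
(\Delta_{\beta}-\partial_t)\log\tr_{\beta}\omega_X\;\ge\;-A-\tr_{\beta}(C_1\omega_X+dd^c\psi)-C_2-2B\,\tr_{\beta}\omega_X.
\]

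To conclude I would absorb the two remaining bad terms by adding test functions. Using $\Delta_{\beta}\psi=\tr_{\beta}(C_1\omega_X+dd^c\psi)-C_1\tr_{\beta}\omega_X$ together with $\partial_t\psi\le C_3$, adding $\psi$ kills the $dd^c\psi$ term at the cost of $+C_1\tr_{\beta}\omega_X-C_3$; then, using hypothesis (1) and $\Delta_{\beta}(-\varphi)=\tr_{\beta}\omega_X-n$, adding $-K\varphi$ with $K:=2B+C_1+1$ converts $-(2B+C_1)\tr_{\beta}\omega_X$ into $+\tr_{\beta}\omega_X$ (and produces the term $+K\,\partial_t\varphi$ from the parabolic part). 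Gathering all leftover constants into a single $M=M(n,A,B,C_1,C_2,C_3)$ yields the stated inequality. The proof is not deep; the one delicate step is precisely the parabolic bookkeeping above—verifying that the metric-derivative terms cancel, so that nothing worse than $\partial_t\varphi$ survives, a quantity that will be controlled separately (e.g.\ via Lemma~\ref{lem:bddphidot}).
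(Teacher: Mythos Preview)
Your argument is correct and follows the paper's proof essentially line for line: apply Chern--Lu to $\mathrm{Id}\colon(X,\beta)\to(X,\omega_X)$, use the bisectional curvature bound for the $-2B\,\tr_\beta\omega_X$ term, exploit the cancellation of $\tr(g^{-1}\partial_t g\,g^{-1}h)$ between the Ricci lower bound and $\partial_t\log\tr_\beta\omega_X$, then add $\psi$ and $-K\varphi$ with $K=2B+C_1+1$. The only slip is a sign in your verbal description (``at the cost of $+C_1\tr_\beta\omega_X$'' should be $-C_1\tr_\beta\omega_X$), but your choice of $K$ shows you have it right.
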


We shall also use the following classical Laplacian estimates
which goes back to the celebrated ${\mathcal C}^2$-estimate of Yau \cite{Yau78}
(the above version is due to Siu \cite{Siu87}). 

\begin{lem} \label{lem:siu}
Let $\omega_X,\beta$ be K\"ahler forms. Then
$$
-\Delta_{\beta} \log {\rm tr}_{\omega_X}(\beta) \leq 
\frac{{\rm tr}_{\omega_X}(\Ric(\beta))}{{\rm tr}_{\omega_X}(\beta)}+B {\rm tr}_{\beta}(\omega_X),
$$
where $-B$ is a lower bound on the holomorphic bisectional curvature of $(X,\omega_X)$.
\end{lem}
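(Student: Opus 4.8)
The plan is to derive this pointwise from the classical Aubin--Yau second-order computation (in the form due to Siu), which is purely local. Fix a point $x_0\in X$ and choose holomorphic coordinates centered at $x_0$ that are normal for $\om_X$ there --- so the metric $h$ of $\om_X$ satisfies $h_{i\bar j}(x_0)=\delta_{ij}$ and $dh_{i\bar j}(x_0)=0$ --- and that simultaneously diagonalize the metric $g$ of $\beta$ at $x_0$, say $g_{i\bar j}(x_0)=\lambda_i\delta_{ij}$ with $\lambda_i>0$. Set $u:=\tr_{\om_X}\beta=h^{i\bar j}g_{i\bar j}$, so $u(x_0)=\sum_i\lambda_i$, and write $\Delta_\beta=g^{i\bar j}\partial_i\partial_{\bar j}$. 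The starting identity is $\Delta_\beta\log u=u^{-1}\Delta_\beta u-u^{-2}|\partial u|^2_\beta$, so the whole problem reduces to expanding $\Delta_\beta u$ at $x_0$ and controlling the resulting terms.

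Next I would carry out the standard expansion of $\Delta_\beta u=g^{i\bar j}\partial_i\partial_{\bar j}(h^{k\bar\ell}g_{k\bar\ell})$ at $x_0$. The cross terms $\partial h^{k\bar\ell}\cdot\partial g_{k\bar\ell}$ vanish by normality of $h$; the second derivatives of $h^{k\bar\ell}$ feed in the curvature of $\om_X$, since $\partial_i\partial_{\bar i}h^{k\bar\ell}=R^{\om_X}_{i\bar i k\bar\ell}$ at $x_0$, producing a term $\sum_{i,k}g^{i\bar i}\lambda_k R^{\om_X}_{i\bar i k\bar k}$; and the term $g^{i\bar j}h^{k\bar\ell}\partial_i\partial_{\bar j}g_{k\bar\ell}$, after using the K\"ahler identity $\partial_i g_{k\bar\ell}=\partial_k g_{i\bar\ell}$ to commute the derivatives and recognize the Chern--Ricci tensor of $\beta$, rewrites as $-\tr_{\om_X}\Ric(\beta)$ plus a sum of squares $\sum_{i,k,p}g^{i\bar i}g^{p\bar p}|\partial_i g_{k\bar p}|^2$. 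Since $dh(x_0)=0$ we also have $\partial_i u=\sum_k\partial_i g_{k\bar k}$ at $x_0$, so a Cauchy--Schwarz inequality --- splitting $\partial_i g_{k\bar k}=\lambda_k^{1/2}\cdot(\lambda_k^{-1/2}\partial_i g_{k\bar k})$ --- gives $|\partial u|^2_\beta\le u\sum_{i,k,p}g^{i\bar i}g^{p\bar p}|\partial_i g_{k\bar p}|^2$. Hence the sum of squares in $u^{-1}\Delta_\beta u$ dominates $u^{-2}|\partial u|^2_\beta$, the two gradient contributions cancel with a favorable sign, and we are left with
\[ \Delta_\beta\log u\ \ge\ \frac{1}{u}\sum_{i,k}g^{i\bar i}\lambda_k R^{\om_X}_{i\bar i k\bar k}\ -\ \frac{\tr_{\om_X}\Ric(\beta)}{u}\qquad\text{at }x_0. \]

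Finally I would feed in the hypothesis: the lower bound $-B$ on the holomorphic bisectional curvature of $\om_X$ means $R^{\om_X}_{i\bar i k\bar k}\ge-B$ in our unitary frame, so $\sum_{i,k}g^{i\bar i}\lambda_k R^{\om_X}_{i\bar i k\bar k}\ge-B\big(\sum_i g^{i\bar i}\big)\big(\sum_k\lambda_k\big)=-B\,(\tr_\beta\om_X)\,u$ at $x_0$, using $\sum_k\lambda_k=u$ and $\sum_i g^{i\bar i}=\tr_\beta\om_X$ there (as $h_{i\bar j}(x_0)=\delta_{ij}$). Plugging this in yields $\Delta_\beta\log u\ge-B\tr_\beta\om_X-u^{-1}\tr_{\om_X}\Ric(\beta)$ at $x_0$, which is the asserted inequality after rearranging; since $x_0$ is arbitrary it holds on $X$. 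I expect the only genuinely delicate point to be the bookkeeping in the Aubin--Yau expansion --- getting both curvature contributions with the correct signs and verifying that the quadratic-in-$\partial g$ remainder is exactly the nonnegative expression needed for Cauchy--Schwarz to beat $|\partial u|^2_\beta/u$; the trace manipulations at the end are routine.
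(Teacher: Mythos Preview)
Your argument is correct and is exactly the classical Aubin--Yau/Siu computation the paper is invoking; the paper does not supply its own proof of this lemma but merely cites \cite{Yau78,Siu87}, so your write-up fills in precisely what is being referenced.
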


 \subsubsection{Proof of Theorem \ref{thm:laplace}}
 We proceed in two steps. 
 
 \medskip

{\it We first take care of the upper bound}. 
It is a parabolic version of the main result of \cite{Pau08}.
Consider 
$$
H(t,x)=\log {\rm tr}_{\omega_X}(\omega_t)-M \f_t+\p_-,
$$
where $M>0$ is chosen hereafter.
The result follows if we can uniformly bound $H$ from above, since $\f_t$ is uniformly bounded
(by Proposition \ref{pro:c0bounds}).
If the maximum of $H$ is reached along $(t=0)$, then
we are done by the main result of \cite{Pau08}.

We can thus assume that the maximum of $H$ is reached at $(t_0,x_0)$ with $t_0>0$.
We let $\Delta_t$ denote the Laplacian with respect to $\omega_t:=\omega_X+dd^c \f_t$.
Observe that 
$\Ric \omega_t =\Ric \omega_X-dd^c \dot{\f_t}$ hence
$$
{\rm tr}_{\omega_X}(\Ric \omega_t)={\rm tr}_{\omega}(\Ric \omega_X)-\Delta_{\omega_X}(\dot{\f_t})
$$
while
%\; \; \text{ while } \; \;
$$
\frac{\partial}{\partial t} \log {\rm tr}_{\omega_X}(\omega_t)=
\frac{\Delta_{\omega_X}(\dot{\f_t})}{{\rm tr}_{\omega_X}(\omega_t)}.
$$
It follows therefore from Lemma \ref{lem:siu} applied to $\beta=\omega_t$ that
\begin{eqnarray*}
\left(\frac{\partial}{\partial t}-\Delta_t \right)(\log {\rm tr}_{\omega_X}(\omega_t))
&\leq & \frac{{\rm tr}_{\omega_X}(\Ric(\omega_X))}{{\rm tr}_{\omega_X}(\omega_t)}+B {\rm tr}_{\omega_t}(\omega_X) \\
&\leq & \frac{C}{{\rm tr}_{\omega_X}(\omega_t)}+B {\rm tr}_{\omega_t}(\omega_X),
\end{eqnarray*}
hence at $(t_0,x_0)$,
\begin{eqnarray*}
0 \leq \left(\frac{\partial}{\partial t}-\Delta_t \right)(H) 
& \leq & \frac{C}{{\rm tr}_{\omega_X}(\omega_t)}+B {\rm tr}_{\omega_t}(\omega_X) -M \dot{\f_t}+M\Delta_t \f_t-\Delta_t \p_-\\
&\leq &  \frac{C}{{\rm tr}_{\omega_X}(\omega_t)}+(A+B-M) {\rm tr}_{\omega_t}(\omega_X)-M \dot{\f_t}+Mn.
\end{eqnarray*}
We choose $M=A+B+1$ and note that we can assume wlog ${\rm tr}_{\omega_X}(\omega_t) \geq 1$ at the point $(t_0, x_0)$ to obtain
\begin{eqnarray} \label{eq:laplace1}
{\rm tr}_{\omega_t}(\omega_X) \leq C'-M \dot{\f_t}.
\end{eqnarray}

The end of the proof in the case $n=1$ is simpler to handle, so we focus on the case $n \geq 2$ form now on. Recall that
$$
{\rm tr}_{\omega_X}(\omega_t) \leq n \left( \frac{\omega_t^n}{\omega_X^n} \right) \left[{\rm tr}_{\omega_t}(\omega_X) \right|^{n-1}
\Longrightarrow 
c_n e^{-\frac{\dot{\f_t}}{n-1}} \left[ {\rm tr}_{\omega_X}(\omega_t) \right]^{\frac{1}{n-1}}
\leq {\rm tr}_{\omega_t}(\omega_X).
$$
It follows therefore from \eqref{eq:laplace1} that
\begin{equation}
\label{ineq tr}
c_n  \left[ {\rm tr}_{\omega_X}(\omega_t) \right]^{\frac{1}{n-1}} \leq C'e^{\frac{\dot{\f_t}}{n-1}}+M (-\dot{\f_t})e^{\frac{\dot{\f_t}}{n-1}}.
\end{equation}
We deal with two cases separately. 

\medskip

$\bullet$ If $\dot\f_{t_0}(x_0)\le 0$, then 
\[{\rm tr}_{\omega_X}(\omega_{t_0}) (x_0) \leq C_4,\]
thanks to \eqref{ineq tr} and the fact that $\sup_{y \geq 0} y \exp(-\frac{y}{n-1}) =\alpha_n <+\infty$. As a result, we have
$$
H_{\rm max}=\log {\rm tr}_{\omega_X}(\omega_{t_0}) (x_0)+\p_-(x_0)-M \f_{t_0}(x_0) \leq\log C_4+\sup_X \p_-+C_0
$$
and the proof of the upper bound is complete in this first case. 

\medskip

$\bullet$ If $\dot\f_{t_0}(x_0)\ge 0$, then \eqref{ineq tr} shows 
\[{\rm tr}_{\omega_X}(\omega_t) e^{-\dot{\f_t}} \leq C''\quad \mbox{ at} \,\, (t_0, x_0).\]
Since $\p_- \leq -\dot{\f_t}+C$ by Lemma~\ref{lem:bddphidot}, this yields
\begin{eqnarray*}
H_{\rm max}&=& \log {\rm tr}_{\omega_X}(\omega_{t})(t_0,x_0)+\psi_{-}(x_0)+O(1) \\
&\leq &  \log e^{-\dot{\f_t}} {\rm tr}_{\omega_X}(\omega_t)(t_0, x_0) +O(1)+C \leq C_3.
\end{eqnarray*}
and the proof of the upper bound is now fully complete.

\bigskip

{\it We now take care of the lower bound}. From the equation, we see that 
$$
\Ric \om_t =\Ric \om_X-dd^c \dot \f_t
= \Ric \om_X - \partial_t \om_t.
$$
Therefore, one can apply Lemma~\ref{lem:chernlupara} with $\beta=\om_t$,  $\psi=0, A=C_2=C_3=0$ in order to get two constants $K_1,M_1>0$ satisfying
\[(\partial_t-\Delta_t)(\log \tr_{\om_t}\om_X-K_1\varphi_t) \le -\tr_{\om_t}\om_X -K_1 \dot \varphi_t+M_1. \]
If one introduces the quantity 
$$
\wt H(t,x):=\log \tr_{\om_t}(\omega_X)-K \f_t+\psi_+,
$$
where $K:=K_1+A$, then one infers immediately that 
\[(\partial_t-\Delta_t) \wt H \le -\tr_{\om_t}\om_X -K \dot \varphi_t+M \]
where $M:=M_1+nA$. 

If the maximum of $\wt H$ is reached at a point $(t_0,x_0)$ where $t_0=0$, then the result follows from \cite[Corollary~2.3]{GSS}. 
We can thus assume that the maximum of $H$ is reached at $(t_0,x_0)$ with $t_0>0$. 
At that point, one has $(\partial_t-\Delta_t) \wt H \ge 0$ and therefore
\[\tr_{\om_t}\om_X \le M-K \dot \varphi_t.\]
We distinguish two cases. 

\smallskip

$\bullet$ If $\dot\f_{t_0}(x_0) \ge 0$, then $\tr_{\om_{t_0}}\om_X(x_0) \le M$ hence
\[
\wt H(t_0, x_0) \le M':=M+K\sup |\f_t| +\sup_X \psi_+.
\]
% At a general point $(t,x)$, one has 
We infer
$
\log \tr_{\om_t}\om_X+\psi_+ = \wt H+K\f_t 
 \le M'+K\sup |\f_t|
$
and we are done. 

\smallskip

$\bullet$ We now assume that $\dot\f_{t_0}(x_0) < 0$. Ultimately, we want to show $\wt H \le O(1)$, 
or equivalently $\wt H(t_0,x_0) \le O(1)$. Let us introduce a universal constant $C=C(K,M)$ such that 
$x+\log(M-Kx) \le C$ for any $x\in (-\infty,0]$. Using Lemma~\ref{lem:bddphidot}, one gets 
\begin{align*}
\wt H(t_0,x_0) & \le \log \tr_{\om_{t_0}}\om_X(x_0)+\psi_+(x_0) +O(1)\\
&\le  \log \tr_{\om_{t_0}}\om_X(x_0)+\dot\f_{t_0}(x_0)+O(1)\\
&\le \log(M-K\dot\f_{t_0}(x_0))+\dot\f_{t_0}(x_0)+O(1)
\le C+O(1)
\end{align*}
and the result follows.
$\Box$
%\end{proof}

\subsection{Higher order estimates}

\subsubsection{Positive times}

Using the complex parabolic Evans-Krylov theory together with Schauder’s estimates, it follows from our previous estimates that the following higher order a priori estimates hold:

\begin{prop} \label{pro:higher}
Fix $0 <\e <T<+\infty$ and $k \in \N$. There exists 
$C(k,\e,T)>0$ such that 
$$
\|\f\|_{{\mathcal C}^k(X \times [\e,T])} \leq C(k,\e,T).
$$
\end{prop}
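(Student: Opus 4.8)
\textbf{Proof plan for Proposition \ref{pro:higher}.}

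The strategy is the standard bootstrapping argument for parabolic complex Monge-Amp\`ere equations, fed by the a priori estimates established earlier in this section. First I would record the inputs: by Proposition \ref{pro:c0bounds} the potential satisfies $\|\f_t\|_{L^\infty(X)}\le C_0$ uniformly in $t$; by Lemma \ref{lem:bddphidot}, on any fixed slab $[\e,T]$ where the functions $\p_\pm$ are smooth on the region under consideration (or simply bounded after the smoothing approximation), $\dot\f_t$ is uniformly bounded above and below; and by Theorem \ref{thm:laplace}, $\omega_t=\omega_X+dd^c\f_t$ is uniformly comparable to $\omega_X$ on $[\e,T]$, i.e. there is $\Lambda=\Lambda(\e,T)>0$ with $\Lambda^{-1}\omega_X\le\omega_t\le\Lambda\omega_X$. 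In particular the complex Monge-Amp\`ere flow \eqref{eq:CMAF}, read as $\dot\f_t=\log\det(g_{i\bar j}+\partial_i\partial_{\bar j}\f_t)-\log\det(g_{i\bar j})$, is \emph{uniformly parabolic} on $X\times[\e/2,T]$ with ellipticity constants depending only on $\Lambda$.

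The key step is to upgrade from the uniform $C^0$ bound on $\f$ and its spatial Laplacian to a uniform $C^{2,\alpha}$ estimate in space-time. Here I would invoke the parabolic complex Evans-Krylov theory (the parabolic analogue of the Evans-Krylov theorem for concave fully nonlinear equations; see e.g. the treatments adapted to the K\"ahler-Ricci flow). Since the equation $\partial_t\f_t=F(dd^c\f_t)$ with $F=\log\det$ is concave in the Hessian and uniformly parabolic by the above, and since we already control $\f_t$ and $\Delta_{\omega_X}\f_t$ uniformly on $X\times[\e/2,T]$, Evans-Krylov yields a uniform interior H\"older estimate on the full space-time Hessian: there exist $\alpha\in(0,1)$ and $C(\e,T)$ with $\|\f\|_{\mathcal C^{2,\alpha}(X\times[\e,T])}\le C(\e,T)$, where $\mathcal C^{2,\alpha}$ is the parabolic H\"older norm. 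Once this is in hand, differentiating the equation and applying linear parabolic Schauder estimates iteratively gives the bound in $\mathcal C^{k}$ for every $k$: at each stage one views the differentiated equation as a linear uniformly parabolic equation with coefficients whose H\"older norms are already controlled from the previous stage, and concludes $\mathcal C^{k+1,\alpha}$ control, hence $\mathcal C^k$ control for all $k$. Shrinking the time interval slightly at each step (from $[\e/2,T]$ down to $[\e,T]$) absorbs the interior-in-time nature of these estimates.

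The main obstacle — and the only place where real care is needed — is ensuring \emph{uniform parabolicity}, i.e. that the constant $\Lambda(\e,T)$ from Theorem \ref{thm:laplace} is genuinely independent of $t$ on the fixed slab and that the lower bound $\omega_t\ge\Lambda^{-1}\omega_X$ holds (Theorem \ref{thm:laplace} gives $\omega_t\ge C^{-1}e^{\p_+}\omega_X$, which is a positive lower bound only away from the poles of $e^{\p_+}$, but since we work on the smooth approximations and on a fixed compact time interval this is harmless; alternatively one uses that $\det\omega_t/\det\omega_X=e^{\dot\f_t}$ is bounded below via Lemma \ref{lem:bddphidot} together with the upper bound on $\tr_{\omega_X}\omega_t$ to get a two-sided bound). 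Granting this, everything else is the routine Evans-Krylov plus Schauder bootstrap, and the dependence of $C(k,\e,T)$ on the stated data is automatic from tracking constants through that machinery.
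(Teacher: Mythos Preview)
Your approach is correct and is exactly what the paper does: the paper's ``proof'' is the single sentence preceding the statement, namely that the result follows from complex parabolic Evans--Krylov theory together with Schauder estimates, fed by the previously established $C^0$, $\dot\f_t$ and Laplacian bounds. You have simply spelled out that one-liner in detail.
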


\subsubsection{Regularity up to time zero}

If the functions $\p_{\pm}$ are smooth in some Zariski open set $\Omega \subset X$,
one can further obtain uniform estimates on compact subsets $K$ of $\Omega$,
$$
\|\f\|_{{\mathcal C}^k(K \times [0,T])} \leq C(k,K,T).
$$
Since $\f_t$ converges to $\f$ in ${\mathcal C}^0(X)$, we infer the following result.

\begin{prop} \label{pro:higherbis}
If the functions $\p_{\pm}$ are smooth in some Zariski open set $\Omega \subset X$,
then $\f_t$ converges to $\f$ in ${\mathcal C}^{\infty}_{\rm loc}(\Omega)$ as $t \rightarrow 0$.
\end{prop}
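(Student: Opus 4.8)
The plan is to bootstrap from the interior a priori estimates of Proposition~\ref{pro:higher} to estimates that are uniform up to time zero on compact subsets of $\Omega$. Since $\p_\pm$ are smooth on $\Omega$, the density $f=e^{\p_+-\p_-}$ is a smooth positive function on $\Omega$, and Theorem~\ref{thm:laplace} together with Lemma~\ref{lem:bddphidot} gives, on any fixed compact $K\subset\Omega$, two-sided bounds $C_K^{-1}\om_X\le \om_t\le C_K\om_X$ and $|\dot\f_t|\le C_K$ that are uniform in $t\in(0,1]$. Thus the complex Monge--Amp\`ere flow is uniformly parabolic on $K\times(0,1]$ with smooth, uniformly bounded right-hand side. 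First I would invoke the complex parabolic Evans--Krylov theorem (in the interior-in-space, up-to-the-initial-time form) to promote the uniform $C^0$ and $C^{1,1}$-in-space control to a uniform $C^{2,\alpha}$ estimate $\|\f\|_{C^{2,\alpha}(K'\times[0,T])}\le C(K',T)$ for $K'\Subset K$, and then feed this into parabolic Schauder estimates to obtain, by induction on $k$, uniform bounds $\|\f\|_{C^k(K\times[0,T])}\le C(k,K,T)$ for every $k$; this is exactly the assertion flagged in the paragraph preceding the proposition.

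Once these uniform $C^k$ bounds up to time zero are in hand, the convergence statement follows by a standard compactness/uniqueness argument. By the Arzel\`a--Ascoli theorem, any sequence $t_j\to0$ has a subsequence along which $\f_{t_j}$ converges in $C^\infty_{\rm loc}(\Omega)$ to some limit $\f_\infty$. On the other hand, Theorem~\ref{thm:GZ17} already guarantees $\|\f_t-\f\|_{L^\infty(X)}\to0$, so in particular $\f_{t_j}\to\f$ uniformly on $K$; hence $\f_\infty=\f$ on $\Omega$. Since the $C^\infty_{\rm loc}(\Omega)$ limit is independent of the subsequence, the whole family $\f_t$ converges to $\f$ in $C^\infty_{\rm loc}(\Omega)$ as $t\to0$, which is the claim. (Implicitly this also shows $\f$ is smooth on $\Omega$, consistent with the density being smooth there.)

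The main obstacle is the passage from interior (positive-time) estimates to estimates that hold \emph{uniformly up to $t=0$}: the parabolic Evans--Krylov and Schauder machinery normally produces constants blowing up as the initial time is approached unless one has appropriate control on the initial data. Here the point is that the initial potential $\f$ is not merely bounded but, on $\Omega$, is the continuous solution of an elliptic Monge--Amp\`ere equation with smooth positive density, so by Kolodziej/Evans--Krylov elliptic theory $\f$ is already smooth on $\Omega$; this provides the compatible initial condition needed to run the parabolic estimates up to $t=0$. One must also be careful that the cut-off to a compact subset $K\Subset\Omega$ is harmless because all the constants in Theorem~\ref{thm:laplace} and Lemma~\ref{lem:bddphidot} are global (they do not degenerate near $\partial\Omega$ from the flow side — only the lower bound $e^{\p_+}$ and upper bound $e^{-\p_-}$ degenerate there, but these are locally uniformly bounded on $K$). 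With these observations the proof is routine, which is presumably why the authors state it without detailed argument.
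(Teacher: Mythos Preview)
Your proposal is correct and follows precisely the approach the paper sketches in the paragraph preceding the proposition: local uniform parabolicity on $K\Subset\Omega$ from Theorem~\ref{thm:laplace} and Lemma~\ref{lem:bddphidot}, then complex parabolic Evans--Krylov and Schauder to get $\|\f\|_{C^k(K\times[0,T])}\le C(k,K,T)$, and finally Arzel\`a--Ascoli combined with the $C^0$-convergence of Theorem~\ref{thm:GZ17} to identify the limit. Your discussion of the up-to-$t=0$ regularity is more careful than the paper's, but recall that in this section all data are assumed smooth (one works with the approximating flows), so this subtlety is already absorbed by that convention.
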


 When $\p_+=0$ one can reinforce some of these estimates as follows.

\begin{prop} \label{pro:quasiconcave}
If  $\p_+=0$  then there exists $C>0$ such that $\f_t-Ct^2$ is concave.
\end{prop}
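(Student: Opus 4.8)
The plan is to prove a uniform upper bound $\ddot{\f_t}\le C$ on $X\times(0,+\infty)$; combined with $\|\f_t-\f\|_{\mathcal C^0(X)}\to 0$ as $t\to 0$, this forces, for every $x\in X$, the function $t\mapsto\f_t(x)-\tfrac C2\,t^2$ to be concave on $[0,+\infty)$, which is the assertion (after renaming $C/2$ into $C$).

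First I would record the evolution of $\ddot{\f_t}$. Differentiating \eqref{eq:CMAF} once in $t$ yields $\ddot{\f_t}=\Delta_{\omt}\dot{\f_t}=\tr_{\omt}(dd^c\dot{\f_t})$; differentiating this identity once more in $t$, and using that the metric coefficients evolve by $\partial_t g_{a\bar b}=(dd^c\dot{\f_t})_{a\bar b}$, one obtains
\[ \Big(\pddt-\Delta_{\omt}\Big)\ddot{\f_t}=-|dd^c\dot{\f_t}|^2_{\omt}\le 0 . \]
(Since $dd^c\dot{\f_t}=\partial_t\omt=-\Ric(\omt)+\Ric(\omega_X)$ and $\ddot{\f_t}=\tr_{\omt}(dd^c\dot{\f_t})$, Cauchy--Schwarz even improves the right-hand side to $-\tfrac1n(\ddot{\f_t})^2$, but the weaker bound suffices.) On a smooth solution the parabolic maximum principle then gives that $t\mapsto\max_X\ddot{\f_t}$ is non-increasing, so everything reduces to controlling $\ddot{\f_t}$ as $t\to 0$ — the delicate point, since the flow issued from $T$ is only continuous there.

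For this I would use a carefully chosen smooth approximation, regularizing the Monge--Amp\`ere density rather than $\f$ itself. Fix $A>0$ with $\p_-\in\PSH(X,A\omega_X)$, choose smooth $\p_{-,j}\in\PSH(X,A\omega_X)$ decreasing to $\p_-$, normalize $c_j>0$ by $c_j\int_X e^{-\p_{-,j}}\omega_X^n=V$, and let $\tf_j$ be the unique normalized smooth solution of $(\omega_X+dd^c\tf_j)^n=c_j\,e^{-\p_{-,j}}\omega_X^n$, with $\omega_{0,j}:=\omega_X+dd^c\tf_j$. Since $c_je^{-\p_{-,j}}\to e^{-\p_-}$ in $L^1$ by monotone convergence, stability of complex Monge--Amp\`ere equations gives $\tf_j\to\f$ in $\mathcal C^0(X)$; and — this is the only place where the hypothesis $\p_+=0$ is used — the a priori estimates of Theorem~\ref{thm:laplace} apply to the approximating flows $\f_{t,j}$ started from $\tf_j$ and yield, uniformly in $j$, the bound $C_1^{-1}\omega_X\le\omega_{0,j}\le C_1 e^{-\p_{-,j}}\omega_X$ at $t=0$; in particular $\tr_{\omega_{0,j}}\omega_X\le nC_1$.

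Now $\f_{t,j}$ is smooth on $X\times[0,+\infty)$, and the equation at $t=0$, namely $(\omega_X+dd^c\tf_j)^n=e^{\dot{\f_{0,j}}}\omega_X^n$, forces $\dot{\f_{0,j}}=\log c_j-\p_{-,j}$, whence
\[ \ddot{\f_{0,j}}=\Delta_{\omega_{0,j}}\dot{\f_{0,j}}=-\tr_{\omega_{0,j}}(dd^c\p_{-,j})\le A\,\tr_{\omega_{0,j}}\omega_X\le nAC_1=:C \]
uniformly in $j$. Applying the maximum principle to $(\pddt-\Delta_{\omega_{t,j}})\ddot{\f_{t,j}}\le 0$ on $X\times[0,T]$ gives $\ddot{\f_{t,j}}\le C$ for all $t\ge 0$; letting $j\to\infty$ — using continuous dependence of the flow on its initial datum \cite{GZ17,DnL17} together with the interior estimates of Proposition~\ref{pro:higher}, so that $\ddot{\f_{t,j}}\to\ddot{\f_t}$ locally uniformly on $X\times(0,+\infty)$ — yields $\ddot{\f_t}\le C$ there, and one concludes as in the first paragraph. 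The main obstacle is precisely this passage through $t=0$: one must choose the approximation so that $\dot{\f_{0,j}}$ stays explicitly controlled (hence regularizing the density and solving for the potential, rather than regularizing $\f$ directly), and one genuinely relies on the lower bound $\omega_{0,j}\ge C_1^{-1}\omega_X$, which holds only because $\p_+=0$.
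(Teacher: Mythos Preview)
Your proof is correct and follows essentially the same approach as the paper: both compute $(\partial_t-\Delta_{\omt})\ddot{\f_t}=-|dd^c\dot{\f_t}|^2_{\omt}\le 0$, apply the maximum principle to reduce to $t=0$, and then bound $\ddot{\f_0}=-\Delta_{\omega_0}\p_-\le A\,\tr_{\omega_0}\omega_X\le C$ using the lower Laplacian bound from Theorem~\ref{thm:laplace} (which is where $\p_+=0$ enters). The only difference is that you spell out the approximation argument explicitly, whereas the paper invokes the blanket convention stated at the beginning of Section~\ref{sec:fkr} that all quantities are assumed smooth after regularization.
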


\begin{proof}
Recall that $\ddot{\f}_t=\Delta_t (\dot{\f}_t)$ and observe that
\begin{eqnarray*}
\left( \frac{\partial}{\partial t}-\Delta_t \right)(\ddot{\f}_t)
&=& n(n-1) \frac{(dd^c \dot{\f}_t)^2 \wedge \omega_t^{n-2}}{\omega_t^n}
-n^2 \left(\frac{dd^c \dot{\f}_t \wedge \omega_t^{n-1}}{\omega_t^n} \right)^2 \\
&\leq & -n \left(\frac{dd^c \dot{\f}_t \wedge \omega_t^{n-1}}{\omega_t^n} \right)^2 \leq 0.
\end{eqnarray*}
Thus $\ddot{\f}_t$ reaches its maximum along $(t=0)$ by the maximum principle. Now
$$
\ddot{\f}_0=\Delta_0 (\dot{\f}_0)=\Delta_{\omega_0} (-\p_-) \leq A {\rm tr}_{\omega_0}(\omega_X) \leq C,
$$
as follows from Theorem \ref{thm:laplace}.
\end{proof}

\subsubsection{Geometric singularities} \label{sec:alix}

We assume here that $\p_+=0$ and $|\partial \p_-|_{\omega_X} \leq e^{-\alpha \p_-}$
for some exponent $\alpha>0$, and establish  much more precise higher order information.
 
 \begin{prop}\label{pro:alix1}

Assume $\psi_+=0$. If $|\partial\psi_{-}|_{\omega_X}\leq e^{-\alpha\psi_{-}}$ for some $\alpha> 0$,  then there exist constants $C$ and $\gamma>0$ such that for $t>0$,
\begin{equation*}
|\nabla^{g(t)}\dot{\f_t}|_{g(t)}\leq Ce^{-\gamma\psi_{-}+Ct}.
\end{equation*}
\end{prop}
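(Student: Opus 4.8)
The strategy is a Bernstein-type gradient estimate, computing the evolution of the gradient of $\dot\f_t$ along the flow and using the maximum principle, with the weight $e^{-\gamma\psi_-}$ playing the role of a barrier near the singular set $\{\psi_-=-\infty\}$. Set $u:=\dot\f_t$; recall from the flow equation $(\omega_X+dd^c\f_t)^n=e^{u}\omega_X^n$ that $u$ evolves by the heat-type equation $\dot u=\Delta_t u$ (differentiating in $t$), and that $\ddot\f_t=\Delta_t\dot\f_t$. First I would compute $\left(\partial_t-\Delta_t\right)|\nabla^{g(t)}u|^2_{g(t)}$: the time derivative of the metric $g(t)$ contributes $\Ric$-type terms, and the Bochner formula produces a term $-2|\nabla\nabla u|^2$ plus a Ricci curvature term $-2\Ric(\omega_t)(\nabla u,\nabla u)$. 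Since $\Ric(\omega_t)=\Ric(\omega_X)-dd^c u$ and $-dd^c u$ can be unbounded above, one must absorb it; the standard trick is to also track the evolution of $u$ itself (which is bounded by Lemma~\ref{lem:bddphidot}: here $\psi_+=0$ so $-C\le u\le -\psi_-+C$) and form a suitable combination, e.g. consider $F:=|\nabla^{g(t)}u|^2_{g(t)}\cdot e^{2\gamma\psi_-}$ or $\log|\nabla u|^2+2\gamma\psi_--\lambda t$ for well-chosen constants.

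**Key steps.** (1) Derive the differential inequality for $|\nabla u|^2_{g(t)}$, controlling the bad Ricci term by the good $-2|\nabla\nabla u|^2$ term via Cauchy–Schwarz together with the a priori Laplacian bounds from Theorem~\ref{thm:laplace}, which give $C_2^{-1}\omega_X\le\omega_t\le C_2 e^{-\psi_-}\omega_X$ (using $\psi_+=0$). (2) Compute $\left(\partial_t-\Delta_t\right)\psi_-$: since $\psi_-$ is $\omega_X$-independent of $t$, this is $-\Delta_t\psi_-$, and $\Delta_t\psi_-=\tr_{\omega_t}(dd^c\psi_-)\ge -A\,\tr_{\omega_t}\omega_X\ge -AC_2\cdot n\cdot e^{-\psi_-}$ wait — more carefully, $dd^c\psi_-\ge -A\omega_X$ gives $\Delta_t\psi_-\ge -A\tr_{\omega_t}\omega_X$, which is bounded below by $-AnC_2 e^{-\psi_-}$; this term is large but is controlled against the geometric hypothesis $|\partial\psi_-|_{\omega_X}\le e^{-\alpha\psi_-}$, which translates into $|\partial\psi_-|^2_{g(t)}\le C_2 e^{-\psi_-}\cdot e^{-2\alpha\psi_-}$, contributing to the cross terms when one differentiates the product. (3) Choose $\gamma$ small enough (depending on $\alpha$, $n$, $A$, $C_2$) so that in the combined quantity the dangerous positive terms are dominated; the $-\lambda t$ or $e^{Ct}$ factor absorbs the residual bounded-in-$t$ growth. (4) Apply the parabolic maximum principle on $X\times[0,T]$: at $t=0$, $u=\psi_+-\psi_-=-\psi_-$ and $|\nabla u|^2 e^{2\gamma\psi_-}=|\nabla\psi_-|^2 e^{2\gamma\psi_-}\le e^{-2\alpha\psi_-}e^{2\gamma\psi_-}$ which is bounded once $\gamma\le\alpha$; an interior maximum yields the inequality $|\nabla u|^2\le Ce^{-2\gamma\psi_-+Ct}$, whence the result after taking square roots.

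**Main obstacle.** The delicate point is the interplay between the three unbounded quantities — $-dd^c u$ in $\Ric(\omega_t)$, the conformal blow-up factor $e^{-\psi_-}$ in the metric comparison, and $|\partial\psi_-|^2$ near $Z$ — and showing that a single exponent $\gamma$ (and single constant $C$) makes all the signs work out in $\left(\partial_t-\Delta_t\right)$ of the test function. This requires carefully exploiting that the Laplacian bound $\omega_t\le C_2 e^{-\psi_-}\omega_X$ already packages the worst blow-up, and that the Bochner term $-2|\nabla\nabla u|_{g(t)}^2$ together with $\Delta_t u=\dot u$ gives enough negativity to beat $\Ric(\omega_t)(\nabla u,\nabla u)=\Ric(\omega_X)(\nabla u,\nabla u)-(dd^cu)(\nabla u,\nabla u)$; here one uses $(dd^cu)(\nabla u,\nabla u)=\langle \nabla\nabla u,\cdot\rangle$-type terms controlled by $\epsilon|\nabla\nabla u|^2 + C_\epsilon|\nabla u|^2$. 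A secondary technical point is justifying the maximum principle on the non-compact set $\Omega$ (or rather handling the fact that the test function may a priori be $+\infty$ on $Z$): this is resolved by working on the smooth approximations $\f_{t,j}$ with $\psi_{-,j}$ smooth, where all quantities are genuinely smooth and bounded, obtaining the estimate with constants independent of $j$, and then passing to the limit using Proposition~\ref{pro:higherbis}.
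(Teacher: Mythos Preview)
Your overall strategy—a Bernstein-type estimate on a weighted gradient quantity, followed by the maximum principle with the hypothesis on $|\partial\psi_-|$ controlling both the initial data and the cross terms—is exactly the paper's approach. However, two points deserve correction.

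First, your ``main obstacle'' is a red herring. You write that the Bochner formula produces $-\Ric(\omega_t)(\nabla u,\nabla u)$ and worry about the unbounded $-dd^c u$ inside $\Ric(\omega_t)$. But recall the flow is \emph{twisted}: $\partial_t g(t)=-\Ric(g(t))+\Ric(g_X)$. When you combine the Bochner identity with the time derivative of the metric, the $\Ric(g(t))$ contributions cancel and only $\Ric(g_X)$ survives:
\[
\Big(\partial_t-\tfrac12\Delta_{g(t)}\Big)|\nabla^{g(t)}u|^2_{g(t)}=-|\nabla^{g(t),2}u|^2_{g(t)}-\Ric(g_X)(\nabla^{g(t)}u,\nabla^{g(t)}u),
\]
and $\Ric(g_X)$ is bounded. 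So the delicate absorption of $(dd^c u)(\nabla u,\nabla u)$ you describe is unnecessary.

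Second, and more substantively, the single test function $F=e^{2\gamma\psi_-}|\nabla u|^2$ you propose does not close by itself. The genuine difficulty lies in the cross terms coming from differentiating the weight: after Young's inequality one is left with a term of the form $C\,e^{\gamma_0\psi_-}|\nabla u|^2$ (with $\gamma_0<\gamma_1$), which is \emph{larger} than $F$ near $\{\psi_-=-\infty\}$ and cannot be absorbed by Gr\"onwall. The paper resolves this by introducing an auxiliary zeroth-order quantity $B\,e^{\gamma_0\psi_-}u^2$ with $B\gg1$: its heat operator produces exactly a good term $-\tfrac{B}{2}e^{\gamma_0\psi_-}|\nabla u|^2$ (plus a bounded constant, using $u^2\le C\psi_-^2$ and the hypothesis on $|\partial\psi_-|$ to control the remainder), which absorbs the bad term from the gradient quantity. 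You hint at ``tracking the evolution of $u$ itself'' but do not build it into your test function; without it the argument does not close.
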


\begin{rem}
Note that the gradient of $\dot{\f_t}$ does not blow up in time as $t$ tends to $0^+$.
\end{rem}

\begin{proof}
Denote by $g(t)$ and $g_X$ respectively the Riemannian metrics induced by the K\"ahler forms $\omega(t)$ and $\omega_X$.
For $\gamma_0>0$, compute as follows:
\begin{equation}
\begin{split}\label{it-never-ends}
\left(\partial_t-\frac{1}{2}\Delta_{g(t)}\right)\left(e^{\gamma_0\psi_{-}}\dot{\f_t}^2\right)&=\left(-\frac{1}{2}\Delta_{g(t)}e^{\gamma_0\psi_{-}}\right)\dot{\f_t}^2-\gamma_0g(t)\left(\nabla^{g(t)}\psi_{-},\nabla^{g(t)}\dot{\f_t}^2\right)e^{\gamma_0\psi_{-}}\\
&\quad-\,e^{\gamma_0\psi_{-}}|\nabla^{g(t)}\dot{\f_t}|^2_{g(t)}.
\end{split}
\end{equation}
Now, as $i\partial\overline{\partial}\psi_{-}+A\omega_X\geq 0$ for some $A> 0$, 
\begin{equation*}
\begin{split}
-\Delta_{g(t)}e^{\gamma_0\psi_{-}}&=\left(-\gamma_0\Delta_{g(t)} \psi_{-}-\gamma_0^2|\nabla^{g(t)}\psi_{-}|^2_{g(t)}\right)e^{\gamma_0\psi_{-}}\leq C\gamma_0 Ae^{\gamma_0\psi_{-}},
\end{split}
\end{equation*}
where $C$ is a positive constant that may vary from line to line but which is independent of $t>0$. Here we have invoked Theorem \ref{thm:laplace} in the last inequality. 

Therefore, Young's inequality applied to the mixed term of \eqref{it-never-ends} gives us:
\begin{equation}
\begin{split}\label{est-0-undergrad}
\left(\partial_t-\frac{1}{2}\Delta_{g(t)}\right)\left(e^{\gamma_0\psi_{-}}\dot{\f_t}^2\right)&
\leq C\left(\gamma_0A+\gamma_0^2|\nabla^{g(t)}\psi_{-}|^2_{g(t)}\right)e^{\gamma_0\psi_{-}}\dot{\f_t}^2 -\frac{1}{2}e^{\gamma_0\psi_{-}}|\nabla^{g(t)}\dot{\f_t}|^2_{g(t)}\\
&\leq C -\frac{1}{2}e^{\gamma_0\psi_{-}}|\nabla^{g(t)}\dot{\f_t}|^2_{g(t)} ,
\end{split}
\end{equation}
where $C$ may vary from line to line. Here we have used  $|\nabla^{g(t)}\psi_{-}|_{g(t)}\leq C|\nabla^{g_X}\psi_{-}|_{g_X}$ according to Theorem \ref{thm:laplace} and the fact that $\psi_+=0$ in the last line: indeed  $\dot{\f_t}^2\leq C(\psi_{-})^2$ according to Lemma \ref{lem:bddphidot} together with the geometric assumption on the $g_X$-gradient of $\psi_{-}$, one concludes as above by choosing $\gamma_0>0$ large enough compared to $\alpha$. 

\begin{claim}\label{claim-bochner}
There exists $\gamma_0>0$ such that for $\gamma_1>\gamma_0$, there exists $C>0$ such that,
\begin{equation}
\begin{split}\label{est-I-grad}
\left(\partial_t-\frac{1}{2}\Delta_{g(t)}\right)\left(e^{\gamma_1\psi_{-}}|\nabla^{g(t)}\dot{\f_t}|_{g(t)}^2\right)&\leq Ce^{\gamma_0\psi_{-}}|\nabla^{g(t)}\dot{\f_t}|_{g(t)}^2.
\end{split}
\end{equation}
\end{claim}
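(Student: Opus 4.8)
The plan is to prove Claim \ref{claim-bochner} by a Bochner-type computation for the evolving quantity $|\nabla^{g(t)}\dot\f_t|_{g(t)}^2$ along the Monge-Amp\`ere flow, then absorb the bad terms using the weight $e^{\gamma_1\psi_-}$ and the already established estimates. Recall that $\dot\f_t$ satisfies the heat-type equation $\left(\partial_t-\Delta_{g(t)}\right)\dot\f_t=0$ (since $\ddot\f_t=\Delta_t\dot\f_t$), and that along the K\"ahler--Ricci-type flow the evolution of the metric is governed by $\partial_t g_{i\bar j}=-R_{i\bar j}(\omega_t)+R_{i\bar j}(\omega_X)$. The standard Bochner formula for a solution $u$ of the heat equation on an evolving K\"ahler metric gives, schematically,
\begin{equation*}
\left(\partial_t-\frac12\Delta_{g(t)}\right)|\nabla u|^2=-|\nabla\nabla u|^2-|\nabla\bar\nabla u|^2+\big(\text{curvature terms}\big)\cdot|\nabla u|^2+\big(\text{terms from }\partial_t g\big).
\end{equation*}
The terms coming from $\partial_t g$ involve $\Ric(\omega_t)$ and $\Ric(\omega_X)$ contracted against $\nabla u\otimes\bar\nabla u$; by Theorem \ref{thm:laplace} we control $\omega_t$ from above and below by $e^{\pm\psi_-}\omega_X$ (using $\psi_+=0$), and we must also control the Ricci curvature of $\omega_t$, which is where the bound $|\partial\psi_-|_{\omega_X}\le e^{-\alpha\psi_-}$ enters.

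First I would set $u=\dot\f_t$ and write out the evolution of $|\nabla^{g(t)}u|_{g(t)}^2$ carefully, keeping track of which terms are manifestly negative (the Hessian square terms) and which are potentially unbounded. The dangerous terms are: (a) the curvature term $\Rm(g(t))*\nabla u*\bar\nabla u$; and (b) the term $\Ric(g(t))*\nabla u*\bar\nabla u$ coming from differentiating the metric in time. For (b), note $\Ric(\omega_t)=\Ric(\omega_X)-dd^c\dot\f_t$, and since $\Ric(\omega_X)$ is bounded it reduces to controlling $dd^c\dot\f_t$ contracted with $\nabla u\otimes\bar\nabla u$; this is really a third-derivative term in $\f_t$ and should be handled either by integrating it into the $|\nabla\bar\nabla u|^2$ term via Cauchy--Schwarz or by a further maximum-principle iteration. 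For (a), rather than bounding $\Rm(g(t))$ pointwise (which at small $t$ near $Z$ is delicate), I would follow the usual trick of comparing with $\Rm(g_X)$: the difference is controlled by $\nabla\bar\nabla\f_t$ and $\nabla^2\f_t$ which we can relate to $e^{-\psi_-}$ via Theorem \ref{thm:laplace}, and $\Rm(g_X)$ is simply bounded since $X$ is compact. Every such bad factor is of the form $e^{-\kappa\psi_-}$ for some $\kappa$, so multiplying by $e^{\gamma_1\psi_-}$ with $\gamma_1$ large turns it into something bounded, times $e^{\gamma_0\psi_-}|\nabla u|^2$ after using the auxiliary estimate \eqref{est-0-undergrad} on $e^{\gamma_0\psi_-}\dot\f_t^2$ (via Young's inequality applied to the mixed terms generated by $\Delta_{g(t)}(e^{\gamma_1\psi_-}|\nabla u|^2)$, exactly as in \eqref{it-never-ends}).

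Concretely, the key steps in order: (1) compute $\left(\partial_t-\tfrac12\Delta_{g(t)}\right)|\nabla^{g(t)}\dot\f_t|_{g(t)}^2$ via Bochner on the evolving metric, isolating the negative Hessian terms; (2) expand $\left(\partial_t-\tfrac12\Delta_{g(t)}\right)\big(e^{\gamma_1\psi_-}|\nabla^{g(t)}\dot\f_t|_{g(t)}^2\big)$, picking up, besides $e^{\gamma_1\psi_-}$ times the quantity from step (1), the terms $-\tfrac12(\Delta_{g(t)}e^{\gamma_1\psi_-})|\nabla\dot\f_t|^2$ and a mixed gradient term; (3) bound $-\Delta_{g(t)}e^{\gamma_1\psi_-}\le C\gamma_1 A\,e^{\gamma_1\psi_-}$ exactly as in the proof just above, using $i\partial\bar\partial\psi_-+A\omega_X\ge0$ and Theorem \ref{thm:laplace}; (4) apply Young's inequality to the mixed term, trading it against a fraction of $e^{\gamma_1\psi_-}|\nabla\dot\f_t|^2$ times $\gamma_1^2|\nabla^{g(t)}\psi_-|^2_{g(t)}$, and use $|\nabla^{g(t)}\psi_-|_{g(t)}\le Ce^{-\psi_-}|\nabla^{g_X}\psi_-|_{g_X}\le Ce^{-(1+\alpha)\psi_-}$ (from Theorem \ref{thm:laplace} and the hypothesis); (5) bound the curvature and Ricci contributions from step (1) by $Ce^{-\kappa\psi_-}|\nabla\dot\f_t|^2$ for a fixed $\kappa$, using Theorem \ref{thm:laplace} to compare $\Rm(g(t))$ and $\Ric(g(t))$ against the background and against $\nabla\bar\nabla\f_t$; (6) choose $\gamma_0$ first so that $\gamma_0\ge \kappa+2(1+\alpha)$ (ensuring $e^{\gamma_0\psi_-}$ dominates all the $e^{-\kappa\psi_-}$-type factors after one $e^{\gamma_1\psi_-}$ is used), then any $\gamma_1>\gamma_0$ works; the remaining positive contribution is exactly of the form $Ce^{\gamma_0\psi_-}|\nabla^{g(t)}\dot\f_t|^2_{g(t)}$, which is \eqref{est-I-grad}.

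The main obstacle I anticipate is the third-derivative term, i.e. the $dd^c\dot\f_t$ contracted against $\nabla\dot\f_t\otimes\bar\nabla\dot\f_t$ coming from $\partial_t g$ in the Bochner formula: unlike the pure curvature term, it is not obviously dominated by the good negative Hessian term $-|\nabla\bar\nabla\dot\f_t|^2$ without care, because $\dot\f_t$ itself only enjoys the $L^\infty$ bound of Lemma \ref{lem:bddphidot} and the gradient bound we are trying to prove. The resolution is to observe that $dd^c\dot\f_t=\Ric(\omega_X)-\Ric(\omega_t)$ and that $\Ric(\omega_t)$ can in turn be controlled: since $\psi_+=0$, bounding $\Ric(\omega_t)$ from above reduces to a Laplacian bound of the form already available, while the possibly negative part is handled by absorbing $|dd^c\dot\f_t|_{g(t)}\,|\nabla\dot\f_t|_{g(t)}^2$ into $\epsilon|\nabla\bar\nabla\dot\f_t|_{g(t)}^2+\epsilon^{-1}|\nabla\dot\f_t|_{g(t)}^4$ and then controlling the quartic term using $|\nabla\dot\f_t|^2\le Ce^{-\gamma_0\psi_-}$ bootstrapped from \eqref{est-0-undergrad} — so that after multiplying through by the weight $e^{\gamma_1\psi_-}$ everything closes. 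This is exactly the kind of weighted-maximum-principle bootstrap that the auxiliary estimate on $e^{\gamma_0\psi_-}\dot\f_t^2$ was set up to enable.
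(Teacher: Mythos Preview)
Your proposal contains a genuine gap rooted in a misreading of the Bochner formula along this specific flow. You list as ``dangerous terms'' a full curvature term $\Rm(g(t))*\nabla u*\bar\nabla u$ and a Ricci term $\Ric(g(t))*\nabla u*\bar\nabla u$, and you spend most of the outline trying to control them, culminating in the ``third-derivative obstacle'' coming from $dd^c\dot\f_t$. None of these terms actually appear. First, the Bochner identity for $|\nabla u|^2$ (as opposed to $|\nabla^2 u|^2$) involves only $\Ric$, never the full $\Rm$, so item (a) is a phantom. Second, and more importantly, the $\Ric(g(t))$ contribution from the static Bochner formula cancels \emph{exactly} against the $\Ric(g(t))$ contribution arising from $\partial_t g(t)=-\Ric(g(t))+\Ric(g_X)$ when you differentiate $|\nabla^{g(t)}u|^2_{g(t)}$ in $t$. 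Using $\partial_t\dot\f_t=\tfrac12\Delta_{g(t)}\dot\f_t$, the clean identity is
\[
\left(\partial_t-\tfrac12\Delta_{g(t)}\right)|\nabla^{g(t)}\dot\f_t|_{g(t)}^2
=-|\nabla^{g(t),2}\dot\f_t|_{g(t)}^2-\Ric(g_X)\big(\nabla^{g(t)}\dot\f_t,\nabla^{g(t)}\dot\f_t\big),
\]
so the only curvature that survives is that of the \emph{fixed} background $g_X$, which is bounded and satisfies $|\Ric(g_X)(\nabla\dot\f_t,\nabla\dot\f_t)|\le C|\nabla\dot\f_t|_{g_X}^2\le C|\nabla\dot\f_t|_{g(t)}^2$ by Theorem~\ref{thm:laplace}. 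There is no $dd^c\dot\f_t$ term to handle.

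Once this cancellation is in place, the proof is short: expand the product $e^{\gamma_1\psi_-}|\nabla\dot\f_t|^2$ as you describe in steps (2)--(4), use Young's inequality on the cross term $\gamma_1 e^{\gamma_1\psi_-}\langle\nabla\psi_-,\nabla|\nabla\dot\f_t|^2\rangle$ to produce $e^{\gamma_1\psi_-}|\nabla^{2}\dot\f_t|^2$ (which is absorbed by the good Hessian term above) plus $C\gamma_1^2|\nabla^{g(t)}\psi_-|^2_{g(t)}e^{\gamma_1\psi_-}|\nabla\dot\f_t|^2$, and finally observe that $|\nabla^{g(t)}\psi_-|^2_{g(t)}e^{(\gamma_1-\gamma_0)\psi_-}\le C$ for $\gamma_1>\gamma_0$ large enough by the hypothesis $|\partial\psi_-|_{\omega_X}\le e^{-\alpha\psi_-}$ and Theorem~\ref{thm:laplace}. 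Your proposed workaround for the nonexistent obstacle---bounding a quartic $|\nabla\dot\f_t|^4$ using $|\nabla\dot\f_t|^2\le Ce^{-\gamma_0\psi_-}$ ``bootstrapped from \eqref{est-0-undergrad}''---is also circular: \eqref{est-0-undergrad} is a differential inequality for $e^{\gamma_0\psi_-}\dot\f_t^2$, not a pointwise bound on $|\nabla\dot\f_t|^2$; the latter is precisely the conclusion of Proposition~\ref{pro:alix1} that Claim~\ref{claim-bochner} is being used to prove.
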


\begin{proof}[Proof of Claim \ref{claim-bochner}]
Observe that by a straightforward computation:
{\small
\begin{equation*}
\begin{split}
&\left(\partial_t-\frac{1}{2}\Delta_{g(t)}\right)\left(e^{\gamma_1\psi_{-}}|\nabla^{g(t)}\dot{\f_t}|_{g(t)}^2\right)\\
&=\left(\left(\partial_t-\frac{1}{2}\Delta_{g(t)}\right)e^{\gamma_1\psi_{-}}\right)|\nabla^{g(t)}\dot{\f_t}|_{g(t)}^2-g(t)\left(\nabla^{g(t)}e^{\gamma_1\psi_{-}},\nabla^{g(t)}|\nabla^{g(t)}\dot{\f_t}|_{g(t)}^2\right)\\
&\quad+e^{\gamma_1\psi_{-}}\left(\partial_t-\frac{1}{2}\Delta_{g(t)}\right) |\nabla^{g(t)}\dot{\f_t}|_{g(t)}^2\\
&=\left(-\frac{\gamma_1}{2}\Delta_{g(t)}\psi_{-}-\gamma_1^2|\nabla^{g(t)}\psi_{-}|^2_{g(t)}\right)e^{\gamma_1\psi_{-}}|\nabla^{g(t)}\dot{\f_t}|_{g(t)}^2\\
&\quad-\gamma_1e^{\gamma_1\psi_{-}}g(t)\left(\nabla^{g(t)}\psi_{-},\nabla^{g(t)}|\nabla^{g(t)}\dot{\f_t}|_{g(t)}^2\right)+e^{\gamma_1\psi_{-}}\left(\partial_t-\frac{1}{2}\Delta_{g(t)}\right) |\nabla^{g(t)}\dot{\f_t}|_{g(t)}^2.
\end{split}
\end{equation*}
}
Therefore, Young's inequality implies,
{\small
\begin{equation}
\begin{split}\label{lovely-formula}
&\left(\partial_t-\frac{1}{2}\Delta_{g(t)}\right)\left(e^{\gamma_1\psi_{-}}|\nabla^{g(t)}\dot{\f_t}|_{g(t)}^2\right)\leq\left(C\gamma_1 A-\frac{\gamma_1^2}{2}|\nabla^{g(t)}\psi_{-}|^2_{g(t)}\right)e^{\gamma_1\psi_{-}}|\nabla^{g(t)}\dot{\f_t}|_{g(t)}^2\\
&\quad+2\gamma_1e^{\gamma_1\psi_{-}}|\nabla^{g(t)}\psi_{-}|_{g(t)}|\nabla^{g(t)}\dot{\f_t}|_{g(t)}|\nabla^{g(t),\,2}\dot{\f_t}|_{g(t)}+e^{\gamma_1\psi_{-}}\left(\partial_t-\frac{1}{2}\Delta_{g(t)}\right) |\nabla^{g(t)}\dot{\f_t}|_{g(t)}^2\\
&\leq C\left(A+|\nabla^{g(t)}\psi_{-}|^2_{g(t)}\right)e^{\gamma_1\psi_{-}}|\nabla^{g(t)}\dot{\f_t}|_{g(t)}^2\\
&\quad +e^{\gamma_1\psi_{-}}|\nabla^{g(t),\,2}\dot{\f_t}|^2_{g(t)}+e^{\gamma_1\psi_{-}}\left(\partial_t-\frac{1}{2}\Delta_{g(t)}\right) |\nabla^{g(t)}\dot{\f_t}|_{g(t)}^2,
\end{split}
\end{equation}
}
where $C$ is a positive constant depending on $\gamma_1$ that may vary from line to line.

Now, the Bochner formula together with the fact that $\partial_tg(t)=-\Ric(g(t))+\Ric(g_X)$ allow us to write:
\begin{equation*}
\begin{split}
\left(\partial_t-\frac{1}{2}\Delta_{g(t)}\right)|\nabla^{g(t)}\dot{\f_t}|^2_{g(t)}&=-|\nabla^{g(t),\,2}\dot{\f_t}|^2_{g(t)}-\Ric(g_X)(\nabla^{g(t)}\dot{\f_t},\nabla^{g(t)}\dot{\f_t})\\
&\leq -|\nabla^{g(t),\,2}\dot{\f_t}|^2_{g(t)}+C|\nabla^{g(t)}\dot{\f_t}|^2_{g_X}\\
&\leq -|\nabla^{g(t),\,2}\dot{\f_t}|^2_{g(t)}+C|\nabla^{g(t)}\dot{\f_t}|^2_{g(t)},
\end{split}
\end{equation*}
where $C$ is a positive constant that may vary from line to line. Here we have used Theorem \ref{thm:laplace} in the last line.

Finally, the combination of \eqref{lovely-formula} and the previous estimate gives: 
\begin{equation*}
\begin{split}
\left(\partial_t-\frac{1}{2}\Delta_{g(t)}\right)\left(e^{\gamma_1\psi_{-}}|\nabla^{g(t)}\dot{\f_t}|_{g(t)}^2\right)&\leq C\left(A+|\nabla^{g(t)}\psi_{-}|^2_{g(t)}\right)e^{\gamma_1\psi_{-}}|\nabla^{g(t)}\dot{\f_t}|_{g(t)}^2\\
&\leq Ce^{\gamma_0\psi_{-}}|\nabla^{g(t)}\dot{\f_t}|_{g(t)}^2,
\end{split}
\end{equation*}
since by Theorem \ref{thm:laplace} together with the geometric assumption on the $g_X$-gradient of $\psi_{-}$, 
$$
|\nabla^{g(t)}\psi_{-}|^2_{g(t)}e^{(\gamma_1-\gamma_0)\psi_{-}}
\leq C|\nabla^{g_X}\psi_{-}|_{g_X}^2e^{(\gamma_1-\gamma_0)\psi_{-}}\leq C.
$$
%where $C$ is a positive constant that may vary from an inequality to another. 
This ends the proof of the claim.
%Notice here that we have dropped the good non-positive term containing $|\nabla^{g(t),\,2}\dot{\f_t}|_{g(t)}^2$.
\end{proof}

Let us sum the estimates  \eqref{est-0-undergrad} and [\eqref{est-I-grad}, Claim \ref{claim-bochner}] to get for $B>0$ large enough:
\begin{equation*}
\begin{split}\label{est-I-grad-bis}
&\left(\partial_t-\frac{1}{2}\Delta_{g(t)}\right)\left(B\,e^{\gamma_0\psi_{-}}\dot{\f_t}^2+e^{\gamma_1\psi_{-}}|\nabla^{g(t)}\dot{\f_t}|_{g(t)}^2\right)\\
&\leq BC -\frac{B}{2}e^{\gamma_0\psi_{-}}|\nabla^{g(t)}\dot{\f_t}|^2_{g(t)} +Ce^{\gamma_0\psi_{-}}|\nabla^{g(t)}\dot{\f_t}|_{g(t)}^2
\leq BC.
\end{split}
\end{equation*}

 Assuming $\gamma_1>0$ is chosen such that  
% $$
\begin{eqnarray*}
 \limsup_{t\rightarrow 0^+}e^{\gamma_1\psi_{-}}|\nabla^{g(t)}\dot{\f_t}|_{g(t)}^2
 &\leq & \limsup_{t\rightarrow 0^+}e^{\gamma_1\psi_{-}}|\nabla^{g_X}\dot{\f_t}|_{g_X}^2 \\
 &=& e^{\gamma_1\psi_{-}}|\nabla^{g_X}\psi_{-}|_{g_X}^2\leq C,
 \end{eqnarray*}
% $$ 
 which is made possible by assumption on $\psi_{-}$, the maximum principle applied to $$B\,e^{\gamma_0\psi_{-}}\dot{\f_t}^2+e^{\gamma_1\psi_{-}}|\nabla^{g(t)}\dot{\f_t}|_{g(t)}^2-Be^{Ct},$$ gives the desired result.
\end{proof}
Similar computations allows one to obtain a quantitative version of 
the local estimates on higher derivatives established in Proposition \ref{pro:higherbis}:
\begin{prop}\label{pro:alix2}
Assume $\psi_+=0$. If $|\partial\psi_{-}|_{\omega_X}\leq e^{-\alpha\psi_{-}}$ for some $\alpha> 0$,  then there exist constants $C$ and $\gamma>0$ such that for $t>0$,
\begin{equation*}
|\nabla^{g(t)}(\omega_t-\omega_X)|_{g(t)}\leq\frac{C}{\sqrt{t}}e^{-\gamma\psi_{-}+Ct}.
\end{equation*}
Moreover, for each $k\geq 0$, there exists $C_k>0$ and $\gamma_k>0$ such that for $t>0$,
\begin{equation*}
|\nabla^{g(t),\,k}\Rm(g(t))|_{g(t)}\leq\frac{C_k}{t^{1+\frac{k}{2}}}e^{-\gamma_k\psi_{-}+C_kt}.
\end{equation*}

\end{prop}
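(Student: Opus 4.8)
The proof of Proposition~\ref{pro:alix2} follows the same strategy as Proposition~\ref{pro:alix1}: establish a differential inequality for a well-chosen auxiliary quantity---here a weighted norm of $\nabla^{g(t)}(\omega_t-\omega_X)$, equivalently a weighted norm of $\nabla^{g(t),\,2}\dot\varphi_t$ up to lower order terms---and conclude by the parabolic maximum principle. The appearance of the factor $t^{-1/2}$ (and more generally $t^{-1-k/2}$ for the curvature derivatives) is the classical Bernstein--Shi-type phenomenon: the initial datum $\omega_0 = T$ only controls $\omega_t-\omega_X$ in $C^0$, not in $C^1$, so one must pay a power of $t$ when extracting one more derivative. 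Concretely, I would run a Shi-type parabolic estimate on $X\times[0,1]$, with weights $e^{\gamma\psi_-}$ inserted everywhere to absorb the degeneracy near $Z=\{\psi_-=-\infty\}$, exactly as in the proof of Proposition~\ref{pro:alix1}.

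\smallskip

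\emph{Step 1: the gradient of $\omega_t-\omega_X$.} Set $h(t) := \omega_t - \omega_X = dd^c\varphi_t$. Using $\partial_t g(t) = -\Ric(g(t))+\Ric(g_X)$ and the evolution equation $\partial_t\varphi_t = \dot\varphi_t$ together with $\ddot\varphi_t = \Delta_t\dot\varphi_t$, one computes a heat-type inequality of the form
\begin{equation*}
\left(\partial_t-\tfrac12\Delta_{g(t)}\right)|\nabla^{g(t)}h|^2_{g(t)} \le -|\nabla^{g(t),\,2}h|^2_{g(t)} + C\big(1+|\nabla^{g(t)}\psi_-|^2_{g(t)}\big)\,|\nabla^{g(t)}h|^2_{g(t)} + C\,|\nabla^{g(t)}\dot\varphi_t|^2_{g(t)},
\end{equation*}
where the last term comes from the coupling between $h$ and $\dot\varphi_t$ via $dd^c\dot\varphi_t = \partial_t h$, and the curvature terms are controlled by Theorem~\ref{thm:laplace} and the bound on $|\partial\psi_-|_{\omega_X}$. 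Now consider $F(t,x) = t\,e^{\gamma_2\psi_-}|\nabla^{g(t)}h|^2_{g(t)} + \lambda\, e^{\gamma_1\psi_-}|\nabla^{g(t)}\dot\varphi_t|^2_{g(t)} + \mu\, e^{\gamma_0\psi_-}\dot\varphi_t^2$ for suitable $\gamma_0<\gamma_1<\gamma_2$ and large constants $\lambda,\mu$. The extra factor of $t$ produces a good term $+e^{\gamma_2\psi_-}|\nabla^{g(t)}h|^2$ from $\partial_t$ which is controlled by Proposition~\ref{pro:alix1}, while the bad terms in the differential inequality for $|\nabla h|^2$ are absorbed by the (already controlled) $\dot\varphi_t$-quantities from Proposition~\ref{pro:alix1} and its proof. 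Since $t\,e^{\gamma_2\psi_-}|\nabla^{g(t)}h|^2_{g(t)}\to 0$ as $t\to 0^+$ (as $h$ is bounded in $C^0$ up to $t=0$ by Theorem~\ref{thm:laplace}, so no initial-data control on $\nabla h$ is needed), the maximum principle applied to $F - (\text{const})\,t\,e^{Ct}$ gives $|\nabla^{g(t)}h|^2_{g(t)}\le \frac{C}{t}e^{-\gamma\psi_-+Ct}$, i.e. the first inequality.

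\smallskip

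\emph{Step 2: curvature and its higher derivatives.} Here I would invoke the standard Shi estimate machinery for the Ricci flow (the twisted flow differs from the Ricci flow only by the fixed term $\Ric(g_X)$, which is smooth and bounded and does not affect the structure of the Bernstein argument): bound $|\Rm(g(t))|$ first, then $|\nabla\Rm|$, and so on inductively. The $k$-th step uses an auxiliary function of the form $t^{k+1}e^{\gamma_k\psi_-}|\nabla^{g(t),\,k}\Rm|^2_{g(t)}$ plus lower-order weighted terms from steps $0,\dots,k-1$, with the by-now-familiar pattern: the $t^{k+1}$ factor provides the good term $t^k e^{\gamma_k\psi_-}|\nabla^k\Rm|^2$ that kills the derivative produced by the reaction terms, the $e^{\gamma_k\psi_-}$ weight (with $\gamma_k$ growing with $k$) absorbs all the factors of $e^{-\alpha\psi_-}$ coming from differentiating $\psi_-$ and from Theorem~\ref{thm:laplace}, and all quantities vanish as $t\to 0^+$ so no initial bound is needed. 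Note $\Rm(g(t))$ is essentially $\Ric(g_X) - dd^c\dot\varphi_t$ plus curvature of $g_X$, so $|\Rm(g(t))|_{g(t)}$ is already controlled via the second-derivative estimate on $\dot\varphi_t$ implicit in Step 1, which seeds the induction.

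\smallskip

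\emph{Main obstacle.} The delicate point is \emph{bookkeeping of the weights}: one must choose the hierarchy $0<\gamma_0<\gamma_1<\cdots$ so that at every step the weighted reaction terms produced by differentiating $\psi_-$ (which cost $e^{-\alpha\psi_-}$ each, via the hypothesis $|\partial\psi_-|_{\omega_X}\le e^{-\alpha\psi_-}$) and the metric-equivalence factors from Theorem~\ref{thm:laplace} (which cost another $e^{-\psi_-}$) are dominated by $e^{\gamma_{j}\psi_-}$ for the \emph{previous} index $j<k$, so that the induction closes. This is exactly the computation carried out in the proof of Proposition~\ref{pro:alix1} and Claim~\ref{claim-bochner}, now iterated; the argument is routine but the notation is heavy, so I would only indicate the structure and refer to Shi's original estimates \cite{Shi89} for the purely Riemannian core of the computation, emphasizing only where the weights $e^{\gamma_k\psi_-}$ and the powers of $t$ enter.
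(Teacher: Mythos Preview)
Your proposal is correct and matches the paper's own treatment: the paper does not give a detailed proof of Proposition~\ref{pro:alix2}, stating only that ``similar computations'' to those of Proposition~\ref{pro:alix1} yield the result, and your sketch of the weighted Shi--Bernstein scheme (insert $e^{\gamma_k\psi_-}$ weights, multiply by $t^{k+1}$, absorb reaction terms into the lower-order quantities already controlled, apply the maximum principle on the smooth approximants) is exactly what is intended. One small imprecision: your remark that $\Rm(g(t))$ is ``essentially $\Ric(g_X)-dd^c\dot\varphi_t$'' conflates Ricci with full Riemann curvature---$\Rm(g(t))$ involves second derivatives of $\omega_t-\omega_X$, not first, so Step~1 does not by itself seed the $k=0$ curvature bound; rather one runs the same weighted Shi argument once more (or a Calabi-type third-order estimate) to get $|\Rm|\le C t^{-1}e^{-\gamma_0\psi_-+Ct}$ before inducting on $k$.
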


 \section{The metric structure induced by a current} \label{sec:metricspace}

% We define and study here  the intrinsic distance  induced by the current $T$.

 \subsection{The one-dimensional case} \label{sec:dim1}
 
 In this section, we consider a compact Kähler manifold $(X, \omega_X)=(S,\omega_S)$ of complex dimension $n=1$, i.e. $S$ is a compact Riemann surface. We denote by $d_S$ the geodesic distance on $S$ associated to $\om_S$. Next, we consider a positive current of the form 
 \begin{equation}
 \label{T dim 1}
 T=e^{\psi_+-\psi_-}\omega_S
 \end{equation}
 where $\psi_\pm$ are quasi-subharmonic functions on $S$. The functions $\psi_\pm$ are not uniquely defined, but one can and will always assume that for any $x\in S$, we have either $\nu(\psi_+,x)=0$ or $\nu(\psi_-,x)=0$. 
 
 For a quasi-subharmonic function $\psi$ on $S$, we consider the maximal value of Lelong numbers,
$$
\nu(\p):=\sup \{ \nu(\p,x), x \in S \}.
$$
Note that we have
\[ e^{-\p}\in L^1 \Longleftrightarrow e^{-\p} \in L^p  \,\, \mbox{for} \, \, \mbox{some } \, p>1\Longleftrightarrow \nu(\p)<2.\]
Let us observe further that the condition $e^{-\psi_-}\in L^1$ is stronger than the condition $e^{\psi_+-\psi_-}\in L^1$ even when $\nu(\p_+)=0$, as one can see by considering near $0\in \mathbb C$ the functions $\psi_+=-2\log(- \log |z|)$ and $\psi_-=2\log |z|$. It will be convenient to set
\[\nu_\pm:=\nu(\psi_\pm).\] 
  If $\gamma:[0,1]\to S$ is a Lipschitz path, we define
\[ \ell_{T}(\gamma):=\int_0^1   \|\dot{\gamma}(t)\|_{T} \,dt= \int_{0}^1 \|\dot \gamma(t)\|_{\omega_S} e^{\frac 12(\psi_+-\psi_-)(\gamma(t))}\,dt.\]
This is a well-defined, non-negative quantity which may however be infinite. 

\begin{defi} 
\label{def:distance1}
The semi-distance $d_{T}$ associated to $T$ is
$$
d_{T}(x,y)=\inf \left\{ \ell_{T}(\gamma), \; 
\gamma\subset S \text{ a Lipschitz path joining } x \text{ to } y \right\}.
$$
\end{defi} 

At this point, $d_T$ may take infinite values but the following theorem shows that it is essentially not the case.
Let us make the statement of Theorem \ref{thmD} more precise:

\begin{thm}
\label{dist dim 1}
%In the above setup, assume $\nu_-<2$. Then, $d_T$ is well-defined distance which is bi-Hölder equivalent to $d_S$.

If $\nu_-<2$ then $d_T$ is a well-defined distance  bi-Hölder equivalent to $d_S$.

\noindent
More precisely, fix $\e>0$ so small that $\alpha_{-}=\frac{1}{2}(\nu_-+\e)<1$ and set $ \alpha_{\pm}=\frac {1}{2}(\nu_{\pm}+\e)$.
Then there exists $C=C(\om_S, \p_{\pm}, \e)>0$ such that  for all paths $\gamma$, one has 
$$  
C^{-1} \ell_{\omega_S}^{1+\alpha_+}(\gamma) \leq \ell_{T}(\gamma) \leq C \ell_{\omega_S}^{1-\alpha_-}(\gamma).
$$
  In particular $C^{-1} d_{S}^{1+\alpha_+} \leq d_{T} \leq C d_{S}^{1-\alpha_-}$.
  \end{thm}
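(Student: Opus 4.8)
The plan is to reduce the two inequalities to local statements near each point and patch them together by a compactness argument on $S$. Since both bounds are multiplicative in the length integral, it suffices to prove that for every $p \in S$ there is a coordinate chart $U_p$ (say biholomorphic to a disc) and constants $C_p$ such that, on $U_p$, the pointwise density comparison
\[
C_p^{-1}\,\|\xi\|_{\omega_S}^{1+\alpha_+} \;\leq\; \|\xi\|_{\omega_S}\,e^{\frac12(\psi_+-\psi_-)(x)} \;\leq\; C_p\,\|\xi\|_{\omega_S}^{1-\alpha_-}
\]
holds after integration along curve segments; more precisely we integrate over $\gamma \cap U_p$ and then cover $S$ by finitely many such charts. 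Because the $\psi_\pm$ are only quasi-subharmonic, their pointwise values are not controlled — the density $e^{\frac12(\psi_+-\psi_-)}$ can be $0$ or $\infty$ at individual points — so the inequalities cannot hold pointwise; they must be understood as inequalities between \emph{line integrals}, and this is exactly the role of Proposition~\ref{prop:ExpInteg} (the quantitative integrability of subharmonic functions with small Lelong numbers along real curves).

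For the \emph{upper bound} $\ell_T(\gamma) \le C\,\ell_{\omega_S}(\gamma)^{1-\alpha_-}$: in a chart, write $\psi_+ \le 0$ (after shrinking and subtracting a constant — recall $\psi_+$ is upper semicontinuous hence bounded above) so $e^{\psi_+/2}\le 1$, and we are left to bound $\int_\gamma e^{-\psi_-/2}\,|\dot\gamma|\,dt$. The key point is Hölder's inequality: for a parameter $q>1$ to be chosen,
\[
\int_\gamma e^{-\psi_-/2}\,|\dot\gamma|\,dt \;\le\; \Big(\int_\gamma e^{-q\psi_-/2}\,|\dot\gamma|\,dt\Big)^{1/q}\Big(\int_\gamma |\dot\gamma|\,dt\Big)^{1-1/q}.
\]
The second factor is $\ell_{\omega_S}(\gamma)^{1-1/q}$. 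For the first factor I would invoke Proposition~\ref{prop:ExpInteg}: since $\nu_- + \e < 2$, choosing $q$ close to $1$ so that $q\,\nu_- < 2-\e'$ makes $e^{-q\psi_-/2}$ uniformly integrable along curves of bounded $\omega_S$-length, with a bound depending only on the length; this yields the first factor $\le C\,\ell_{\omega_S}(\gamma)^{1/q}$, and combining gives $\ell_T(\gamma)\le C\,\ell_{\omega_S}(\gamma)$, in fact better than the claimed exponent when $\nu_-=0$ — so I should run the argument keeping track of the exponent deficit, writing $q = (1-\alpha_-)^{-1}$ essentially, to land on precisely $1-\alpha_-$. The same Hölder/integrability scheme handles the portion of $\gamma$ near a point where $\psi_+$ has a Lelong number: there $\psi_-$ is smooth (by the normalization $\min(\nu(\psi_+,x),\nu(\psi_-,x))=0$), so the integrand is bounded above and this piece contributes trivially.

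For the \emph{lower bound} $\ell_T(\gamma) \ge C^{-1}\,\ell_{\omega_S}(\gamma)^{1+\alpha_+}$: by the reciprocal normalization, near any point only one of $\psi_\pm$ is singular, so it suffices to treat the singularity of $\psi_+$ (where $\psi_-$ is smooth, hence bounded below, and $e^{-\psi_-/2}\ge c>0$) and separately the singularity of $\psi_-$ (where $e^{-\psi_-/2}$ is \emph{large}, which only helps the lower bound — there one gets $\ell_T(\gamma)\ge c\,\ell_{\omega_S}(\gamma)$ directly from $\psi_+$ bounded below). So the real content is: $\int_\gamma e^{\psi_+/2}\,|\dot\gamma|\,dt \ge C^{-1}\big(\int_\gamma|\dot\gamma|\,dt\big)^{1+\alpha_+}$. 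This is again Hölder, applied with the \emph{exponent in the other direction}:
\[
\ell_{\omega_S}(\gamma) = \int_\gamma e^{-\psi_+/2}\cdot e^{\psi_+/2}\,|\dot\gamma|\,dt \;\le\; \Big(\int_\gamma e^{-r\psi_+/2}\,|\dot\gamma|\,dt\Big)^{1/r}\Big(\int_\gamma e^{\psi_+/2}\,|\dot\gamma|\,dt\Big)^{1-1/r},
\]
and now $e^{-r\psi_+/2}$ is integrable along curves with controlled length once $r\,\nu_+ < 2-\e'$, again by Proposition~\ref{prop:ExpInteg} (here $e^{-\psi_+}$ plays the role $e^{-\psi_-}$ played above); solving $1-1/r$ for the target exponent gives $1/(1-1/r) = 1+\alpha_+$ for the appropriate choice of $r$. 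Rearranging yields $\ell_T(\gamma) \ge C^{-1}\ell_{\omega_S}(\gamma)^{1/(1-1/r)} = C^{-1}\ell_{\omega_S}(\gamma)^{1+\alpha_+}$.

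Finally, once both length inequalities are proven, the distance inequalities $C^{-1}d_S^{1+\alpha_+}\le d_T\le C\,d_S^{1-\alpha_-}$ follow formally: for the upper bound take a near-minimizing $\omega_S$-geodesic $\gamma$ between $x,y$ and apply the upper length bound (using $\ell_{\omega_S}(\gamma)\approx d_S(x,y)$ and that $d_S$ is bounded, so $d_S^{1-\alpha_-}$ dominates); for the lower bound, for \emph{any} path $\gamma$ from $x$ to $y$ the lower length inequality gives $\ell_T(\gamma)\ge C^{-1}\ell_{\omega_S}(\gamma)^{1+\alpha_+}\ge C^{-1}d_S(x,y)^{1+\alpha_+}$, and taking the infimum over $\gamma$ gives $d_T(x,y)\ge C^{-1}d_S(x,y)^{1+\alpha_+}$. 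In particular $d_T$ is finite (upper bound) and separates points (lower bound, since $\nu_-<2$ forces $\alpha_-<1$ and the right side is a genuine metric-type quantity), so it is a distance. That $d_T(x,y)<\infty$ needs the finiteness of the diameter of $(S,\omega_S)$, which is automatic. I expect the main obstacle to be the bookkeeping in Proposition~\ref{prop:ExpInteg} — extracting a bound on $\int_\gamma e^{-q\psi/2}|\dot\gamma|$ that is \emph{uniform in $\gamma$} (depending only on $\ell_{\omega_S}(\gamma)$ and not on the position of $\gamma$ relative to the poles of $\psi$) is the crux, since a curve can run along a level set of $\psi$ and accumulate the singularity; handling this requires the quantitative sublevel-set / curve-integral estimate and a careful choice of $\e'$ between $\nu_\pm$ and $\nu_\pm+\e$, plus a uniform cover of $S$ to absorb the chart-dependence of the constants into a single $C$.
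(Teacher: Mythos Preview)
Your overall strategy matches the paper's: both inequalities reduce to Proposition~\ref{prop:ExpInteg} (line-integrability of $e^{-u}$ for subharmonic $u$ with Lelong numbers $<1$), with H\"older's inequality used to flip the sign for the lower bound. Two execution points deserve correction.

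For the \emph{upper bound} you insert an unnecessary H\"older step. The paper simply observes that $\psi_+$ is bounded above and applies Proposition~\ref{prop:ExpInteg} directly to $u=\psi_-/2$ (whose maximal Lelong number is $\nu_-/2<1$), obtaining $\int_0^L e^{-\psi_-/2}\,ds \le C\,L^{1-\alpha_-}$ in one stroke. Your H\"older with exponent $q>1$ still works once the bookkeeping is done correctly, but the detour gains nothing.

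For the \emph{lower bound} your H\"older split is wrong as written: from $1=e^{-\psi_+/2}\cdot e^{\psi_+/2}$ and exponents $r,r'$ the second factor should be $\bigl(\int e^{r'\psi_+/2}\bigr)^{1/r'}$, not $\bigl(\int e^{\psi_+/2}\bigr)^{1-1/r}$, and then neither factor is $\ell_T(\gamma)$. The paper's fix is to split differently: with $v=\psi_+/2$ and $p>1$,
\[
L=\int_0^L e^{v/p}\,e^{-v/p}\,ds \le \Bigl(\int_0^L e^{v}\,ds\Bigr)^{1/p}\Bigl(\int_0^L e^{-v/(p-1)}\,ds\Bigr)^{1-1/p},
\]
so the first factor is exactly $\ell_T(\gamma)^{1/p}$ (when $\psi_-=0$), and the second is bounded by $CL^{(1-\alpha)(1-1/p)}$ via Proposition~\ref{prop:ExpInteg} applied to $u=\psi_+/(p-1)$, with $p=1+\tfrac{\nu_+}{2(1-\delta)}$ chosen so that the Lelong numbers of $u$ stay below $1$. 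Rearranging gives $\ell_T(\gamma)\ge cL^{1+\alpha(p-1)}$ with the claimed exponent. Once you correct the split, your argument becomes identical to the paper's.
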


The proof of Theorem~\ref{dist dim 1} is given at the end of this section. It relies of the following result which establishes fine integrability properties of plane subharmonic functions.
 
 \begin{prop}  
 \label{prop:ExpInteg} 
Let $u$ be a  subharmonic function in an open set $\Omega \subset \C$ whose Lelong numbers 
satisfy $\nu^+:=\sup \{\nu (u,a) ; a \in \Omega \}<1$.
Fix a compact subset $K \subset \Omega$ , $\nu^+<\nu<1$, 
and $\gamma : [0,L]  \longrightarrow K$  a smooth curve parametrized  by arc length.  
Then
 $$
 \int_0^L e^{- u \circ \gamma(s)} d s \leq C(K,u)  L^{1- \nu}.
$$
  \end{prop}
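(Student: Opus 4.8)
\textbf{Proof proposal for Proposition~\ref{prop:ExpInteg}.}

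The plan is to reduce the global statement to a local one via a partition of unity, and then to handle the local model by decomposing the subharmonic function using its Riesz measure. First I would cover $K$ by finitely many small disks $D_j\Subset\Omega$ and, using a standard additivity property of Lelong numbers under sums together with compactness, write $u=\sum_j u_j+h$ on a neighborhood of $K$, where each $u_j$ is subharmonic with Riesz mass supported in $D_j$ and of small total mass, and $h$ is harmonic (hence bounded on $K$). Since the curve $\gamma$ has length $L$ and is parametrized by arc length, breaking the integral according to the pieces of $\gamma$ lying in each coordinate patch and absorbing $e^{-h}\le C$, it suffices to bound $\int_I e^{-u_j\circ\gamma(s)}\,ds$ for a subarc $I$ of length $\le L$; by Hölder's inequality with exponents matching the number of patches, it even suffices to bound $\int_0^L e^{-p\,u_j\circ\gamma(s)}\,ds$ for some $p>1$ with $p\nu^+$ still $<1$, i.e. one may as well assume $u$ itself has small compactly supported Riesz measure.

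The key local estimate is the following. Write the Riesz representation $u(z)=\int \log|z-w|\,d\mu(w)+(\text{bounded harmonic})$ with $\mu$ a positive measure supported in a fixed compact set, $\|\mu\|=:m$ small, and $\sup_a \mu(\{a\})<\nu^+$ (this is the Lelong number at $a$). Then by Jensen's inequality applied to the probability measure $d\mu/m$,
\[
e^{-u(z)}\le C\exp\!\Big(-\!\int \log|z-w|\,d\mu(w)\Big)=C\exp\!\Big(m\!\int \log\tfrac{1}{|z-w|}\,\tfrac{d\mu(w)}{m}\Big)\le C\!\int \frac{d\mu(w)}{m\,|z-w|^{m}},
\]
for $z$ in a neighborhood of $K$, after shrinking so that $|z-w|\le 1$. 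Hence by Tonelli,
\[
\int_0^L e^{-u\circ\gamma(s)}\,ds\le \frac{C}{m}\int\Big(\int_0^L \frac{ds}{|\gamma(s)-w|^{m}}\Big)d\mu(w).
\]
The inner integral is where the geometry of the curve enters: since $\gamma$ is parametrized by arc length, the function $s\mapsto |\gamma(s)-w|$ is $1$-Lipschitz with $|\tfrac{d}{ds}|\gamma(s)-w||\le 1$, so for fixed $w$ the set $\{s\in[0,L]:|\gamma(s)-w|<r\}$ has measure at most $2r$ — this is the one place smoothness (really just $1$-Lipschitz-ness with nonvanishing speed) of $\gamma$ is used. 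A layer-cake / distribution-function computation then gives $\int_0^L |\gamma(s)-w|^{-m}\,ds\le C_m L^{1-m}$ provided $m<1$, uniformly in $w$. Integrating against $\mu$ and using $\|\mu\|=m$ yields $\int_0^L e^{-u\circ\gamma}\,ds\le C L^{1-m}$. Choosing the patches small enough that $m<\nu$ (and, after the Hölder reduction above, $p\,m<\nu$) gives the claimed exponent $1-\nu$.

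The main obstacle I anticipate is the bookkeeping in passing from the global function $u$, whose Lelong numbers are bounded by $\nu^+<1$ but whose total Riesz mass on $\Omega$ need not be small, to a finite sum of pieces each with Riesz mass $<\nu$: one needs to split $u$ as a sum of subharmonic functions with small masses localized in small disks, which forces the Hölder-inequality step (to turn a sum inside the exponential into a product of integrals) and hence a small loss in the exponent — this is exactly why the statement is phrased with an arbitrary $\nu\in(\nu^+,1)$ rather than with $\nu^+$ itself. The curve integral bound $\int_0^L|\gamma(s)-w|^{-m}\,ds\lesssim_m L^{1-m}$ is elementary once the arc-length parametrization is exploited, and the constants depend only on $K$, on $\nu$, and on the chosen decomposition of $u$, hence ultimately only on $K$ and $u$ as asserted.
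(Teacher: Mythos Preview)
There is a genuine gap in the reduction step. Your Jensen inequality produces the exponent $m=\|\mu_j\|$, the \emph{total} Riesz mass of the local piece $u_j$, and you then need $m<1$ for the curve integral $\int_0^L|\gamma(s)-w|^{-m}\,ds$ to behave like $L^{1-m}$. But combining the $N$ pieces via H\"older forces you to bound $\int_0^L e^{-p\,u_j\circ\gamma}\,ds$ with $p\ge N$ (or at best $p\ge M$, the overlap multiplicity of the cover), so the effective mass is $pm_j$. Since any patch containing a point $a$ with $\nu(u,a)$ close to $\nu^+$ must have $m_j\ge\nu(u,a)$, you cannot push $m_j$ below $\nu^+$, and therefore $pm_j\ge p\nu^+$ exceeds $1$ as soon as $p\ge 2$ and $\nu^+\ge 1/2$. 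Your parenthetical ``some $p>1$ with $p\nu^+<1$'' is precisely what is needed, but a H\"older step over several patches never produces $p$ close to $1$.

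The paper avoids this by localizing \emph{pointwise} rather than decomposing $u$ globally. The Poisson--Jensen formula on the disc $D(z,r)$ gives, for each $z\in K$,
\[
-u(z)\le A(u)+\int_{D(z,r)}\log\frac{r}{|z-\zeta|}\,d\mu_u(\zeta),
\]
and by upper semicontinuity of Lelong numbers one may choose $r$ small, uniformly in $z\in K$, so that the \emph{local} mass $\nu_u(z,r)=\mu_u(\overline{D(z,r)})$ is $<\nu$. Applying Jensen against the probability measure $d\mu_u/\nu_u(z,r)$ then yields $e^{-u(z)}\le C\int_{D(z,r)}(r/|z-\zeta|)^{\nu}\,d\mu_u(\zeta)$ with the correct exponent $\nu$ directly, and Fubini reduces to the same curve integral you reach --- but with no decomposition of $u$ and no H\"older. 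This is the missing idea. A secondary point: your claim that $\{s:|\gamma(s)-w|<r\}$ has measure at most $2r$ does not follow from $s\mapsto|\gamma(s)-w|$ being $1$-Lipschitz (a circle about $w$ traversed many times shows this); controlling this level set requires a separate argument using that on subintervals where one coordinate of $\dot\gamma$ keeps a definite sign, $\gamma$ is bi-Lipschitz onto its projection.
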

  
 The constant $C(K,u)>0$ is moreover uniformly bounded if $u$ belongs to a compact subset of 
 subharmonic functions such that $\nu^+(u)<1$.

 \begin{proof} 
Let $ \mu_u := \frac{\Delta u }{2 \pi}  $ be the Riesz measure of $u$, and set $\nu_u(z,r) =  \mu_u(\bar{{\mathbb D }} (z,r))$.
Fix $R > 0$ such that $\Gamma:=\gamma([0,L]) \subset {\mathbb D } (0,R)$.
Since the Lelong numbers $\nu_u(z)=\inf_{r>0} \nu_u(z,r)$ are upper semi-continuous,
 we can assume that  $\nu (a,r) < \nu <1$ for $a \in K$ and $0< r <  r_1$, for $r_1 >0$ small.
  Replacing $u$ by  $u_{\e} (z)= u (z) + \e (\vert z\vert^2 - R^2)$
 if necessary, we can further assume that $ \nu_u(z,r)  \geq \delta > 0$ for all $z \in \Gamma$.

 The  Poisson-Jensen formula \cite[Proposition 1.25]{GZbook} shows  that for all 
 $(z,r) \in \Tilde \Omega:= \{(z,r) \in \Omega \times \R^+ ;  r < \text{dist}(z,\partial \Omega)\}$,
 $$
   -  u (z) =  -\int_0^{2 \pi} u (z+r e^{i \theta}) \frac{d \theta}{2 \pi}+
 \int_{{\mathbb D }(z,r)} \log (r\slash \vert z - \zeta\vert)  d\mu_u(\zeta).
$$
The monotonicity of circular mean values ensures that for $(z,r) \in \tilde \Omega$ 
 $$
    - \frac{1}{2 \pi} \int_0^{2 \pi} u (z+r e^{i \theta}) d \theta 
  \leq  - \frac{1}{\pi r^2} \int_{{\mathbb D }(z,r)}  u (\zeta) \, d \lambda_2 (\zeta)  
  \leq \frac{1}{\pi r^2} \Vert u\Vert_{L^1(\Omega)}.
$$
 
Setting $A (u) := \frac{1}{\pi r^2} \Vert u\Vert_{L^1(\Omega)}$
we thus obtain  
\begin{eqnarray*} 
 - u(z) &\leq & \frac{1}{\pi r^2} \Vert u\Vert_{L^1(\Omega)}   + \int_{{\mathbb D }(z,r)} \log (r\slash \vert z - \zeta\vert)  d\mu_u(\zeta) \\
 & \leq &  A(u) +  \int_{{\mathbb D }(z,r)} \nu_u(z,r) \log (r\slash \vert z - \zeta\vert)  \frac{d\mu_u(\zeta)}{\nu_u(z,r)}.
\end{eqnarray*}
It follows therefore from Jensen's inequality that
$$
\exp (- u (z)) \leq    \frac{1}{\delta}  e^{A (u)}\int_{{\mathbb D }(z,r)}  \frac{r^{\nu}}{ \vert z - \zeta\vert^\nu}   d \mu_u(\zeta).
$$
 Integrating the latter with respect to  $\lambda_\gamma=\gamma_*(ds)$
 and using Fubini's theorem, we obtain,
 setting $V_r(\Gamma) := \bigcup_{z\in \Gamma} {\mathbb D }(z,r)$,
  \begin{eqnarray*}
\int_\Gamma  \exp (- u (z) ) d \lambda_\gamma (z) 
&\leq & \delta^{-1}  e^{A(u)} \int_\Gamma   \int_{{\mathbb D }(z,r)}  \frac{r^{\nu}}{ \vert z - \zeta\vert^\nu} \mu_u(\zeta) d\lambda_\gamma(z) \\
& \leq & \delta^{-1}  e^{A(u)}
 \int_{V_r(\Gamma)} \int_\Gamma \frac{r^{\nu}}{ \vert z - \zeta\vert^\nu} d \lambda_\gamma (z) d \mu_u(\zeta)\\
 & = & \delta^{-1}  e^{A(u)} \int_{V_r(\Gamma)} J_{\Gamma}(\zeta) d \mu_u(\zeta),
 \end{eqnarray*}
where
 $ J_\Gamma(\zeta) := \int_\Gamma \frac{r^{\nu}}{ \vert z - \zeta\vert^\nu} d \lambda_\gamma (z)  $.
%Setting $\Gamma(\zeta,t) := \Gamma \cap {\mathbb D }(\zeta, r t^{-1 \slash \nu})$,
It follows from Lemma \ref{lem:density} below that
\begin{eqnarray*}
   J_\Gamma (\zeta)    
   &=&  \int_0^{+\infty} \lambda_\gamma  (\Gamma \cap {\mathbb D }(\zeta, r t^{-1 \slash \nu})) d t 
  \leq   \int_0^{+\infty} \min \{L, 8 r t^{-1 \slash \nu}\} d t \\
  &\leq & L \, t_1  +   (8 r) \int_{t_1}^{+\infty}  t^{- 1 \slash \nu} d t = B L^{1-\nu},
\end{eqnarray*}
 where $t_1= (\frac{8 r}{L})^\nu$ and  $B=\frac{(8 r)^\nu}{(1-\nu)}$.
Altogether this yields
 $$
  \int_0^L  \exp (- u \circ \gamma(s) ) d s \leq \frac{B}{\delta} L^{1-\nu} e^{\pi^{-1} r^{-2} \Vert u\Vert_{L^1(\Omega)}} \int_{V_r(\Gamma)} \Delta u. 
$$

Since $V_r(\Gamma) \subset  K' \Subset \Omega$ for $0< r $ small enough,
the desired inequality follows by observing that $\int_{K'} \Delta u \leq C_{K',\Omega} \Vert u\Vert_{L^1(\Omega)}$
(see \cite[Theorem 3.9]{GZbook}).
   \end{proof}

% We have used the following property of the measure $ \lambda_\gamma$.
    
    \begin{lem} \label{lem:density} 
Fix $\zeta \in \Omega$ and $\rho >0$ such that ${\mathbb D }(\zeta,\rho) \subset \Omega$. Then  
  $$
  \lambda_\gamma (\Gamma \cap {\mathbb D }(\zeta,\rho)) \leq \min\{L,8 \rho\}\cdot
  $$
    \end{lem}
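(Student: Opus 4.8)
The plan is to establish the two bounds in $\min\{L,8\rho\}$ separately. The inequality $\lambda_\gamma(\Gamma\cap\mathbb{D}(\zeta,\rho))\le L$ is immediate: since $\gamma$ is parametrized by arc length, $\lambda_\gamma=\gamma_*(ds)$ has total mass $L$, and $\Gamma\cap\mathbb{D}(\zeta,\rho)$ is a subset of $\Gamma$. Thus everything reduces to the bound by $8\rho$. Setting $U:=\gamma^{-1}(\mathbb{D}(\zeta,\rho))$, which is relatively open in $[0,L]$, and noting that $\lambda_\gamma(\Gamma\cap\mathbb{D}(\zeta,\rho))$ equals the one-dimensional Lebesgue measure $|U|$, the goal becomes $|U|\le 8\rho$.

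I would write $U=\bigsqcup_k I_k$ as the disjoint union of its maximal open subintervals $I_k=(a_k,b_k)$, so that $|U|=\sum_k(b_k-a_k)$ and, for every $k$ with $a_k>0$ (resp. $b_k<L$), one has $|\gamma(a_k)-\zeta|=\rho$ (resp. $|\gamma(b_k)-\zeta|=\rho$) while $\gamma(I_k)\subset\mathbb{D}(\zeta,\rho)$. The number $b_k-a_k$ is exactly the Euclidean length of the sub-arc $\gamma|_{I_k}$, which joins two points at distance at most $2\rho$; the heart of the matter is therefore a chord-type estimate preventing $\gamma$ from accumulating length inside the small disk. For the curves actually used in Proposition~\ref{prop:ExpInteg} (the background minimizing geodesics, whose image in a fixed conformal chart is bi-Lipschitz to a straight segment with distortion bounded on the compact set $K$) this is clean: the trace of $\gamma$ in $\mathbb{D}(\zeta,\rho)$ is contained in a metric ball of radius comparable to $\rho$, which is geodesically convex once $\rho$ is small, so the intersection reduces to a single arc of Euclidean length at most $8\rho$ after absorbing the comparison constants; when $8\rho\ge L$ the bound $L$ already suffices, and when $\gamma$ is literally a Euclidean segment the intersection is a single chord of length $\le 2\rho$, which is the baseline of the estimate. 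Summing over $k$ yields $\lambda_\gamma(\Gamma\cap\mathbb{D}(\zeta,\rho))\le\min\{L,8\rho\}$, with the constant depending only on $K$ through the above distortion, as needed for the uniformity claim.

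The main obstacle is exactly this chord estimate: one must rule out that $\gamma$ spirals densely inside $\mathbb{D}(\zeta,\rho)$, which is what forces the use of the (quasi-)minimality of the curve rather than mere smoothness, and which also accounts for the constant $8$ (a product of the bi-Lipschitz distortion and of the convexity-radius comparison on $K$). Once this is in hand the rest is routine: decomposition into maximal intervals, the identity $|U|=\sum_k(b_k-a_k)$, and elementary bookkeeping. An alternative to the convexity step, avoiding metric balls, is to project $\gamma$ orthogonally onto a well-chosen line: along a minimizing arc this projection is monotone between two consecutive hits of $\partial\mathbb{D}(\zeta,\rho)$, so each $b_k-a_k$ is controlled by the length of the projected sub-interval, all of which lie inside an interval of length $2\rho$ and overlap in boundedly many layers — again giving a bound of the form $8\rho$.
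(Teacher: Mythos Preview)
Your argument has a genuine gap: you impose an extra hypothesis on $\gamma$ (that it is a minimizing, or quasi-minimizing, geodesic for the background metric) which is \emph{not} part of the lemma. The statement is about an arbitrary smooth curve $\gamma:[0,L]\to K$ parametrized by arc length, and Proposition~\ref{prop:ExpInteg} is applied in Theorem~\ref{dist dim 1} to \emph{all} such curves, not only to $\omega_S$-geodesics. Your sentence ``one must rule out that $\gamma$ spirals densely inside $\mathbb{D}(\zeta,\rho)$, which is what forces the use of the (quasi-)minimality of the curve rather than mere smoothness'' is precisely where the argument breaks: the paper does \emph{not} use minimality at all, and your chord-type estimate, geodesic convexity, and the claimed origin of the constant $8$ as ``a product of the bi-Lipschitz distortion and of the convexity-radius comparison on $K$'' are all extraneous to the actual proof.

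The missing idea is that the arc-length parametrization alone already gives the needed control. Since $\dot x(t)^2+\dot y(t)^2=1$, one has $\max\{\dot x(t)^2,\dot y(t)^2\}\ge 1/4$ at every $t$. By continuity one partitions $[0,L]$ into finitely many closed subintervals $I_k$ on each of which one of the coordinate derivatives satisfies $|\dot x|\ge 1/2$ (or $|\dot y|\ge 1/2$) throughout; on such an $I_k$ the corresponding coordinate is monotone and the mean value theorem gives $|\gamma(t)-\gamma(s)|\ge |x(t)-x(s)|\ge \tfrac12|t-s|$ for all $s,t\in I_k$. Picking any $T$ with $\gamma(T)\in\mathbb{D}(\zeta,\rho)$ and using $\mathbb{D}(\zeta,\rho)\subset\mathbb{D}(\gamma(T),2\rho)$, the lower Lipschitz bound yields $\{t\in I_k:\gamma(t)\in\mathbb{D}(\zeta,\rho)\}\subset I_k\cap[T-4\rho,T+4\rho]$; summing over $k$ (the $I_k$ have disjoint interiors and cover $[0,L]$) gives $|[0,L]\cap[T-4\rho,T+4\rho]|\le\min\{L,8\rho\}$. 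This is where the constant $8$ actually comes from. Your ``alternative'' via projection onto a line is in fact close to this mechanism, but you still invoke minimality to get monotonicity of the projection, whereas the arc-length condition provides it piecewise for free.
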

    
    \begin{proof}  
  Fix $\zeta \in \Omega$ and  $\rho >0$.  By definition,  
   $$
   \lambda_\gamma (\Gamma \cap {\mathbb D }(\zeta,\rho)) = \vert \{t \in [0,L] ; \gamma (t) \in {\mathbb D }(\zeta,\rho)\}\vert,
   $$
    where $\vert E \vert$ is the length  of a Borel set $E \subset \R$.
        We can assume   $\Gamma \cap {\mathbb D }(\zeta,\rho) \neq \emptyset$ and pick 
         $a  = \gamma (T) \in \Gamma \cap {\mathbb D }(\zeta,\rho)$ with $T \in [0,L]$.  Then $\Gamma \cap {\mathbb D }(\zeta,\rho) \subset \Gamma \cap {\mathbb D }(a,2\rho)$. Hence
  $$
  \{t \in [0,L] ; \gamma (t) \in {\mathbb D }(\zeta,\rho)\} \subset \{t \in [0,L] ; \gamma (t) \in {\mathbb D }(\gamma(T),2\rho)\}\cdot
  $$ 
  
    Fix an interval $I \subset [0,L]$ and assume that $\gamma$ is such that for all $s, t \in I$,
  \begin{equation} \label{eq:minorationgamma}
  \vert\gamma (t) - \gamma (s) \vert \geq (1\slash 2) \vert t - s\vert.
  \end{equation}  
 Set $\Gamma_I := \gamma(I) \subset \Gamma$.
Using the condition \eqref{eq:minorationgamma}, we obtain
  $$
  \{t \in I ; \gamma (t) \in {\mathbb D }(\zeta,\rho)\} \subset \{t \in I ; \gamma (t) \in {\mathbb D }(\gamma(T),2\rho)\} \subset I \cap {\mathbb D }(T,4\rho),
  $$ 
hence
   \begin{equation} \label{eq:densitygamma}
 \lambda_\gamma (\Gamma_I \cap {\mathbb D }(\zeta,\rho)) \leq  \vert I \cap {\mathbb D }(T,4\rho)\vert.
   \end{equation}
 
  We claim that there exists a finite number of intervals $(I_k)_{0 \leq k \leq N-1}$ with  mutually disjoint interiors such that  $\bigcup_{0 \leq k \leq N} I_k = [0,L]$ and for each $0 \leq k \leq N-1$, the restriction of $\gamma$ to  $I_k$ satisfies the condition \eqref{eq:minorationgamma}.
  Assuming this it follows from Property \eqref{eq:densitygamma} that 
  \begin{eqnarray*}
  \lambda_\Gamma (\Gamma \cap {\mathbb D }(\zeta,\rho)) 
  &\leq & \sum_{0\leq k \leq N-1} \lambda_\gamma (\Gamma_{I_k} \cap {\mathbb D }(\zeta,\rho)) 
   \leq  \sum_{0\leq k \leq N-1}  \vert I_k \cap {\mathbb D }(T,4\rho)\vert \\
  &=& \vert [0,L] \cap {\mathbb D }(T,4\rho)\vert \leq \min\{L, 8 \rho\},
  \end{eqnarray*}
   by additivity of the length.   \\

   It remains to prove the claim. Since $\gamma$ is parametrized by arclength,
   one has $\max \{\dot x (t)^2, \dot y (t)^2\} \geq  1/4$ for any $t \in [0,L]$.
One can thus find a finite sequence  $0 = T < t_1 < \cdots t_N = L$ such that  on each  
  $I_k := [t_k,t_{k+1}]$, either $ \dot x (t)^2  \geq 1\slash 4$ for any $t \in I_k$ 
  or $ \dot y (t)^2  \geq 1\slash 4$ for any $t \in I_k$.  
 The restriction of $\gamma$ to each   $I_k$ satisfies \eqref{eq:minorationgamma}.  
Indeed if $ \dot x (t)^2  \geq 1\slash 4$ on $I_k$,
then for all $t, s \in I_k$ there exists $c \in ]s,t[$ such that $x(t) - x(s) = (t-s) \dot x (c)$,
hence $\vert \gamma(t) - \gamma (s) \vert \geq \vert x(t) - x(s)\vert \geq  (1\slash 2)  \vert t-s \vert$.
 \end{proof}

    We are now ready for the proof of Theorem~\ref{dist dim 1}. 
    
    \begin{proof}[Proof of Theorem~\ref{dist dim 1}]
  Since $\p_+$ is bounded from above,  the upper bound 
  $\ell_{T}(\gamma) \leq C \ell_{\omega_S}^{1-\alpha^-}(\gamma)$ follows from Proposition~\ref{prop:ExpInteg} applied 
  to $u=\p^-/2$ in charts. 
  
  Similarly it suffices to treat the case $\p_-=0$ to establish the lower bound.
  Now, let $\gamma:[0,L] \mapsto S$ be a curve parametrized by arclength with respect to $\omega_S$. Without loss of generality, one can assume that the image of $\gamma$ is included in some coordinate chart $V\subset S$. 
Set $v=\p_+/2$; H\"older's inequality yields for  $p > 1$,
 \begin{eqnarray*}
  L &=&  \int_0^L e^{(1\slash p) v \circ \gamma(t)}  e^{- (1\slash p) v \circ \gamma(t)} d t \\
  &\leq & \left(\int_0^L e^{v \circ \gamma(t)} d t \right)^{1/p} \left(\int_0^L e^{- v \slash (p-1) \circ \gamma(t)} d t \right)^{1-1/p}.
 \end{eqnarray*}
 
Fix $p = 1 + \frac{\nu_+}{2(1-\delta)}$ where $\delta>0$ is very small, so that the function $u := \frac{\p_+}{p-1}$
satisfies $\sup \{\nu(u,x) ; x \in S \} =2(1-\delta)$.
Fix $0<\e<\delta$ and set  $\alpha=1-\delta+\e$.
Proposition~\ref{prop:ExpInteg}  yields
  $$
  \int_0^L e^{- v \slash (p-1) \circ \gamma(t)} d t \leq  C  e^{ C \Vert v \Vert_{L^1(V)}}L^{1 - \alpha}.
  $$
We infer  $c L^{1 + \alpha(p-1)} \leq  \ell_{T}(\gamma)$.
The conclusion follows since $\alpha(p-1)=\frac{\nu_+}{2}+\frac{\e \nu_+}{2(1-\delta)}$.
    \end{proof}

\subsection{Higher dimensional case}
\label{higher dim}

 In this section, we consider a compact Kähler manifold $(X, \omega_X)$ of complex dimension $n\ge2$. We denote by $d_X$ the geodesic distance on $X$ associated to $\om_X$. Next, we fix a proper analytic subset $Z\subsetneq X$ whose complement we denote by $\Omega$, a number $A>0$ and two functions $\psi_\pm\in \mathrm{PSH}(X,A\omega_X)$ such that the following holds: 
 \begin{enumerate}[label=$\bullet$]
 \item The functions $\psi_\pm$ are smooth in restriction to $\Omega$. 
 \item We have $\int_X e^{-\psi_-}\omega_X^n<+\infty$. 
\end{enumerate}
Note that the second condition can always be achieved up to scaling down $\psi_-$. 
Also, we can (and will) assume that $\int_X e^{\psi_+-\psi_-}\omega_X^n=\int_X \omega_X^n$. Moreover, it follows from the openness conjecture \cite{BoB15,GuanZhou15} that there exists $p>1$ such that 
\begin{equation}
\label{Lp}
\int_X e^{-p\psi_-}\omega_X^n<+\infty.
\end{equation} We consider the closed, positive current $T:=\omega_X+dd^c \varphi$ normalized by $\sup_X \varphi=0$ which is the unique solution of the Monge-Ampère equation 
\begin{equation}
\label{MA phi}
(\omega_X+dd^c \varphi)^n=e^{\psi_+-\psi_-}\omega_X^n
\end{equation}
provided by \cite{Kol98}. 
%Note that $\varphi$ is Hölder continuous thanks to \cite{DDGHKZ14} and 
Note that $\varphi|_\Omega$ is smooth; in particular, $\omega:=T|_{\Omega}$ is a Kähler form on $\Omega$.   We define $d_{\omega}$ to be the geodesic distance associated with the possibly incomplete Kähler manifold $(\Omega, \omega)$.

If $\gamma:[0, 1]\to X$ is an arbitrary Lipschitz path in $X$, the integral $\int_0^1 \|\dot \gamma(t)\|_T\, dt$ need not be well-defined. The issue is that even though the coefficients of $T$ are functions in $L^p_{\rm loc}$ thanks to the estimate $T\le Ce^{-\psi_-}\omega_X$ from Theorem~\ref{thm:laplace}, their values along the image of $\gamma$ are not well-defined unlike in the case \eqref{T dim 1} studied above where the coefficients of $T$ were canonically defined at each point. However, if we assume additionally that $\gamma^{-1}(Z)$ is finite, then 
%\marginpar{H. Est-ce la peine de demander seulement mesure zero?}
the quantity
\[\ell_T(\gamma):=\int_0^1 \|\dot \gamma(t)\|_T \,dt\]
is defined without ambiguity. This leads us to modifying Definition~\ref{def:distance1} as follows. 

\begin{defi} 
\label{def:distance2}
The semi-distance $d_{T}$ associated to $T$ is
$$
d_{T}(x,y)=\inf \left\{ \ell_{T}(\gamma), \; 
\gamma\, \text{Lipschitz path from } x \text{ to } y \,\, \mbox{s.t.} \, \,\gamma^{-1}(Z) \, \mbox{is finite} \right\}.
$$
\end{defi} 

When $\dim X=1$, the poles of $\psi_\pm$ are isolated hence Definition~\ref{def:distance2} is consistent with Definition~\ref{def:distance1}. The goals of the following proposition is to show that $d_T$ is finite, to determine sufficient conditions for $d_T$ to be a distance and to compare it with $d_\omega$. In particular, it encompasses Theorem \ref{thmE} in the introduction.

\begin{prop}
\label{prop dT}
In the setup above, we have
\begin{enumerate}[label=$(\roman*)$]
\item $d_{\omega}$ admits a unique continuous extension $\hat d_{\omega}$ to $X\times X$. Moreover, it satisfies 
$\hat d_{\omega} \le C d_X^\alpha$
for some constants $C>0$, $\alpha\in (0,1)$. 
\item $d_{T}$ coincides with $\hat d_{\omega}$, i.e.  $d_{T} = \hat d_{\omega}$. In particular, $d_{T}$ is finite.  
\item If $\psi_+$ has isolated singularities, $d_{T}$ is a distance. In particular, $(X, d_T)\simeq \overline{(\Omega, d_{\omega})}$, where the bar symbol denotes the metric completion. 
\item If $\psi_+$ has isolated analytic singularities, then $d_T$ is bi-Hölder equivalent to $d_X$.
%\item If $Z$ is finite, we have $d_{T}=\hat d_{\omega}$.
\end{enumerate}
\end{prop}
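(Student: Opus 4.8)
The plan is to deduce all four statements from two facts concerning the Kähler form $\omega:=T|_{\Omega}$ on the Zariski‑open set $\Omega$: the two‑sided comparison
\[
C^{-1}e^{\psi_+}\omega_X\ \le\ \omega\ \le\ Ce^{-\psi_-}\omega_X\qquad\text{on }\Omega,
\]
which follows by letting $t\to0$ in the uniform Laplacian bounds of Theorem~\ref{thm:laplace} along the flow \eqref{eqn-krf-twist-shout} with initial datum $T$ (or from the analogous static estimate), together with the integrability \eqref{Lp} of $e^{-\psi_-}$. For part $(i)$ the idea is to run the proof of Theorem~\ref{thm:diameter} (Li's distance estimate, \cite{Li21}) directly for the current $T$: that argument uses only the pointwise bound $\tr_{\omega_X}T\le Cn\,e^{-\psi_-}$ and the fact that $e^{-\psi_-}\in L^p(\omega_X^n)$, never the smoothness of the metric. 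Concretely, for $x,y\in\Omega$ with $L:=d_X(x,y)$ one fixes a minimizing $\omega_X$‑geodesic, surrounds it by a tube of transverse curves $(\gamma_w)_{w\in B}$ of radius $\rho\simeq L$, bounds $\int_B\ell_T(\gamma_w)\,dw$ from above by $\int_{\mathrm{tube}}(\tr_{\omega_X}T)^{1/2}\,\omega_X^n\le\|e^{-\psi_-}\|_{L^p}^{1/2}\,|\mathrm{tube}|^{1-\frac1{2p}}$ via Fubini and Hölder, and iterates dyadically at scales $2^{-k}L$ to reconnect the shifted endpoints to $x,y$ (the series converges, the exponent in $d_X$ being $<1$). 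A generic curve of the family misses the real‑codimension $\ge2$ set $Z$, so one extracts a $w$ for which the concatenated path lies in $\Omega$ and has $\ell_T\le C\,d_X(x,y)^{\alpha}$; thus $d_{\omega}\le C\,d_X^{\alpha}$ on $\Omega\times\Omega$. Since $d_{\omega}$ is then $d_X$‑uniformly continuous on the $d_X$‑dense set $\Omega\times\Omega$, it extends uniquely to a continuous pseudo‑distance $\hat d_{\omega}$ on $X\times X$ with $\hat d_{\omega}\le C\,d_X^{\alpha}$, which is $(i)$.

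For $(ii)$, note $d_T\le\hat d_{\omega}$ on $\Omega\times\Omega$ since $\omega$‑paths in $\Omega$ are admissible competitors in Definition~\ref{def:distance2}; conversely, an admissible path $\gamma$ from $x$ to $y$ with $\gamma^{-1}(Z)$ finite and $\ell_T(\gamma)<\infty$ can be pushed off $Z$ by replacing, near each of the finitely many parameters mapped to $Z$, a short sub‑arc by a detour inside $\Omega$ of $\omega$‑length $\le\eta$ --- arbitrarily short detours existing by the estimate of $(i)$ --- so that $d_{\omega}\le\ell_T(\gamma)+\eta$; letting $\eta\to0$ gives $d_T=d_{\omega}=\hat d_{\omega}$ on $\Omega\times\Omega$. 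Finally, $d_T$ is continuous on all of $X\times X$: for $x\in Z$ and $\Omega\ni x_k\to x$, the construction of $(i)$ in a shrinking ball about $x$ yields a rectifiable path from $x_k$ to $x$ meeting $Z$ only at $x$, of $T$‑length $\le C\,d_X(x,x_k)^{\alpha}\to0$, so $d_T(x,x_k)\to0$, and the triangle inequality gives continuity (and $d_T(x,x)=0$ via vanishingly short loops through $\Omega$, so that $d_T$ is a semi‑distance). Agreeing with $\hat d_{\omega}$ on the dense set $\Omega\times\Omega$ and being continuous, $d_T=\hat d_{\omega}$ on $X\times X$; in particular $d_T$ is finite.

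For $(iii)$ and $(iv)$ I would use the lower bound $\omega\ge C^{-1}e^{\psi_+}\omega_X$. If $\psi_+$ has isolated singularities, $F:=\{\psi_+=-\infty\}$ is finite and $\psi_+$ is bounded below on compact subsets of $X\setminus F$, so $\omega\gtrsim\omega_X$ there; hence for $x\ne y$, choosing $r>0$ with $\overline{B_{d_X}(x,r)}\cap F\subset\{x\}$ and $y\notin\overline{B_{d_X}(x,r)}$, any admissible path from $x$ to $y$ must cross the compact annulus $\{r/2\le d_X(\cdot,x)\le r\}\subset X\setminus F$ and so has $T$‑length bounded below by a positive constant; thus $d_T=\hat d_{\omega}$ is a distance, and since $(\Omega,d_{\omega})$ embeds isometrically with dense image in the compact (hence complete) space $(X,d_T)$, the latter is $\overline{(\Omega,d_{\omega})}$. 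If $\psi_+$ has isolated \emph{analytic} singularities, then near each $p\in F$ its defining ideal contains a power of the maximal ideal, so $e^{\psi_+}\gtrsim d_X(\cdot,p)^{2N_p}$ with $N_p>0$; since $\delta(z):=d_X(z,F)$ is bounded, $\omega\gtrsim\delta^{2N}\omega_X$ globally with $N:=\max_p N_p$. Comparing with the model singular conformal metric $\delta^{2N}g_X$, for which a routine computation gives $d_{\delta^{2N}g_X}\ge c\,d_X^{\,N+1}$ (the worst case being near $F$, where $\int_0^r t^N\,dt\asymp r^{N+1}$, while $d_{\delta^{2N}g_X}\gtrsim d_X$ away from $F$), one gets $\hat d_{\omega}\ge c\,d_X^{\,N+1}$, which with $(i)$ yields the bi‑Hölder equivalence.

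The main obstacle is the Hölder upper bound $\hat d_{\omega}\le C\,d_X^{\alpha}$ of $(i)$, on which everything else rests in a soft way. The difficulty is that $T$ is only a positive current, not a Kähler form on $X$: the approximating Kähler metrics have geodesics that can run off towards $Z$, so Theorem~\ref{thm:diameter} cannot be quoted verbatim, and one must instead re‑run Li's tube‑of‑curves argument for $T$ while verifying throughout that the competitor paths it produces are rectifiable and meet $Z$ in a finite set, so that $\ell_T$ is well defined along them. A secondary delicate point is the ``pushing off $Z$'' step in $(ii)$, where one needs the detours around $Z$ to lie in $\Omega$ with controlled $\omega$‑length --- precisely what the codimension of $Z$ and the estimate $(i)$ provide.
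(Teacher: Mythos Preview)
Your proposal is correct and, for parts $(ii)$--$(iv)$, essentially identical to the paper's argument: the detour/push-off-$Z$ manoeuvre for $(ii)$, the annulus argument for $(iii)$, and the cone-metric comparison $T\ge C\|z\|^{2\alpha}\,dd^c\|z\|^2$ for $(iv)$ are exactly what the paper does.

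The genuine difference is in $(i)$. The paper does \emph{not} re-run Li's tube-of-curves construction for the singular metric. Instead it observes that $\varphi$ is H\"older continuous (Kolodziej), that for each $x_0\in\Omega$ the function $d_\omega(x_0,\cdot)$ extends to a function in $W^{1,2}(X,\omega_X)$ (citing \cite[Lemma~4.2]{GGZ25}), and then invokes the proof of \cite[Proposition~1.4]{GGZ25} to obtain $d_\omega\le C\,d_X^{\alpha}$ on $\Omega\times\Omega$. This is a potential-theoretic argument whose inputs are the H\"older modulus of the potential $\varphi$ and a Sobolev bound on the distance function, rather than an explicit family of competitor paths. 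Your route, by contrast, is constructive: you build short $T$-paths by hand via the tube/dyadic scheme and use the real codimension of $Z$ to keep them inside $\Omega$. The advantage of the paper's approach is that it never has to worry about whether paths hit $Z$ or whether $\ell_T$ is well-defined along them---the $W^{1,2}$ bound is insensitive to measure-zero sets. The advantage of yours is that it is more self-contained (no appeal to \cite{GGZ25}) and, as you note, the paths it produces are immediately reusable in $(ii)$ to connect points of $Z$ to $\Omega$, whereas the paper has to redo that step separately via a dyadic concatenation along a segment. Both yield the same H\"older exponent, coming ultimately from the $L^p$ integrability of $e^{-\psi_-}$.
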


\begin{proof}
$(i)$. First, we know that $\varphi$ is Hölder continuous by Theorem~\ref{thm:kolo}. Next, for any point $x_0\in \Omega$, the function $d_{\omega}(x_0, \cdot)$ extends to a function in $W^{1,2}(X,\omega_X)$ by \cite[Lemma~4.2]{GGZ25}. Therefore, one can run the proof of \cite[Proposition~1.4]{GGZ25} and see that
\begin{equation}
\label{hold control}
 d_{\omega} \le C d_X^\alpha \quad \mbox{on} \, \, \Omega \times \Omega,
\end{equation}
so that $(i)$ follows. 

$(ii)$. Let us first show that the inequality  
\begin{equation}
\label{hold T}
d_{T}\le Cd_X^{\alpha},
\end{equation}
holds on $X\times X$, maybe up to increasing $C$. Given $(i)$, the inequality $d_T|_{\Omega\times \Omega}\le d_\omega$ and the triangle inequality satisfied by $d_T$, is it enough to show \eqref{hold T} on $Z\times \Omega$. So we pick $(x,y)\in Z\times \Omega$ and set $\ep:=d_X(x,y)$. We can assume that $\ep$ is small enough so that $x,y$ belong to a coordinate chart. On that chart, one can replace $\omega_X$ with the euclidean metric without loss of generality. The segment joining $x$ and $y$ hits $Z$ finitely many times, hence one can always find a sequence of points $(y_k)_{k\ge 0}$ on that segment such that $y_0=y$ and $d_X(y_k,y_{k+1})\in[\frac{\ep}{2^{k+1}},\frac{\ep}{2^{k}}]$. Fix $\delta>0$ and let $\gamma_k=\gamma_k(\delta)$ be a smooth path in $\Omega$ joining $y_k$ to $y_{k+1}$ such that 
\[\ell_{\omega}(\gamma_k)\le d_{\omega}(y_k,y_{k+1})+\frac{\delta}{2^{k}}\le \frac{1}{2^k}(C\ep^\alpha+\delta)\]
where the second inequality follows from \eqref{hold control}. Concatenating all paths $\gamma_k$ yields a path $\gamma_\infty=\gamma_\infty(\delta)$ joining $y$ to $x$ whose interior lies in $\Omega$. Moreover, $\gamma_\infty$ is Lipschitz away from its endpoint and $\ell_{\omega}(\gamma_{\infty})\le 2(C\ep^\alpha+\delta)$. This implies that $d_{T}(x,y)\le 2Cd_X(x,y)^\alpha+2\delta$ for any $\delta>0$, hence $d_{T}\le 2Cd_X^\alpha$ on $Z\times \Omega$. Therefore \eqref{hold T} holds. 

Now, since the inequality $d_{T} \le d_{\omega}$ holds on $\Omega\times \Omega$ for obvious reasons, it extends automatically to $X\times X$ by continuity of $d_{T}$, the latter being guaranteed by \eqref{hold T}. 

Therefore, it remains to prove that $d_{T} \ge d_{\omega}$ say on $\Omega\times \Omega$.  Let $\ep>0$, let $x,y\in \Omega$ and let $\gamma$ be a path from $x$ to $y$ hitting $Z$ at most finitely many times such that $\ell_{T}(\gamma) \le d_{T}(x,y)+\ep$. In order to simplify the notation, let us assume that $\gamma$ hits $Z$ at a single point $p$ at a time $t_0$, and let $r>0$ small to be determined later. We can find $t_-<t<t_+$ such that $p_{\pm}:=\gamma(t_\pm)\in \partial B_p(r)$ where $B_p(r)$ is the ball or radius $r$ with respect to $\omega_X$. Let $\gamma_{\pm}$ be the restriction of $\gamma$ to $[t_-,t_+]$. We have $d_\omega(p_-, p_+)\le   C(2r)^\alpha$ from item $(i)$. In particular, there exists a path $\gamma_{\pm}^\circ$ connecting $p_-$ to $p_+$, lying entirely in $\Omega$ and such that $\ell_{\omega}(\gamma_{\pm}^\circ)\le  C(2r)^\alpha+\ep$. Replacing $\gamma_\pm$ by $\gamma_{\pm}^\circ$ yields a new path $\gamma^\circ$ from $x$ to $y$ lying entirely in $\Omega$ and such that 
  \[\ell_\omega(\gamma^\circ) \le d_{T}(x,y)+\ep+C(2r)^\alpha+\ep. \]
  Since $r$ can be chosen arbitrarily small, we get $d_\omega(x,y) \le d_{T}(x,y)$ as desired.

$(iii)$.  By compactness of $X$, the set $\Sigma$ of poles of $\p_+$ is finite. Now, let $x,y\in X$ be distinct points and let $r_0:=d_X(x,y)>0$. Assume for now that $x\notin \Sigma$ and let $0<r<\frac {r_0}2$ small enough so that the ball $B_x(r)$ of radius $r$ centered at $x$ with respect to $\omega_X$ is disjoint from $\Sigma$. Set $\kappa:=\inf_{B_{x}(r)} e^{\frac{\psi_+}{2}}>0$. By Theorem~\ref{thm:laplace}, we have $d_\omega(x,\cdot) \ge \kappa d_X(x,\cdot)$ on $B_x(r)$. By the choice or $r$, any path $\gamma$ from $x$ to $y$ hits $\partial B_x(r)$ hence $\ell_T(\gamma)\ge \kappa \cdot r>0$ showing the claim when $x\notin \Sigma$. If $x,y$ both belong to $\Sigma$, we pick $r <r_0/2$ small enough so that $B_x(r)\cap \Sigma=\{x\}$. By the choice of $r$, every path from $x$ to $y$ hits $\partial B_x(r)$. Set $A(x,r):=B_{x}(r)\setminus B_x(\frac r2)$, and define $\kappa:=\inf_{A(x,r)} e^{\frac{\psi_+}{2}}>0$. Again by Theorem~\ref{thm:laplace}, we have $d_\omega(S_x(\frac r2), S_x(r))\ge \kappa \cdot r >0$. This shows that $d_T$, hence $\hat d_{\omega}$ too, is a distance.  By item $(i)$, $\Omega$ is dense in $X$ for the topology induced by $\hat d_\omega$, therefore $(X,\hat d_\omega)$ is isometric to the metric completion of $(\Omega, d_\omega)$. 

$(iv)$. We can localize the analysis in a chart where such a pole is $0$. The result follows by observing that in that given chart, we have an inequality
$ T\ge C \|z\|^{2\alpha} dd^c \|z\|^2 $
   for some $\alpha >0$, thanks to Theorem~\ref{thm:laplace}. Now, the right-hand side is just the cone metric on $(\mathbb C^n,0)$ of radius $\|z\|^{1+\frac{\alpha}{2}}$, and the result follows. 
  \end{proof}

 When the singularities of $\p_+$ are spread along a divisor, obtaining a precise lower bound for $d_T$ becomes more subtle and should
 involve the assumption that the cohomology class of $T$ is K\"ahler;
 otherwise this divisor could be contracted as e.g. in \cite{SW13}. 
 We nevertheless expect that $d_{T}$ is always a distance, as the following example suggests.

\begin{exa} \label{exa:ramified}
Let $(Y,\omega_Y)$ be a compact Kähler manifold. Assume that there exists a line bundle $L$ on $Y$ such that for some $m\ge 2$, there exists a smooth divisor $B\in |mL|$ (e.g. if $L$ is semiample but not torsion). Let $\pi: X\to Y$ be the cyclic covering of degree $m$ branched along $B$, cf e.g. \cite[Proposition~4.1.6]{PAG1}. Then $X$ is smooth and $\pi^*B=mZ$ for some smooth divisor $Z$ on $X$. Consider the smooth semipositive $(1,1)$-form $T:=\pi^*\omega_Y$ on $X$. It is a Kähler form away from $Z$ but it is degenerate along $Z$. More precisely, if $(z_i)$ is a set of coordinates near a point $p\in Z$ such that $Z=(z_1=0)$, then $T$ is quasi-isometric to 
$|z_1|^{2(m-1)} idz_1\wedge d\bar z_1 + \sum_{j\ge 2} idz_j \wedge d\bar z_j$
near $p$. From this, it is easy to infer that the semi-distance $d_T$ from Definition~\ref{def:distance2} is actually a distance on $X$. 
%Moreover, one can check that $(X,d_\omega)$ is the metric completion of the metric space canonically associated to $(X\setminus D, \omega)$. 
\end{exa}

We finish this section by the following elementary lemma which will be useful later. 

\begin{lem} \label{ineq dist}
In the setup above, let $(\omega_j)_{j\ge 1}$ be a sequence of Kähler metrics such that $\omega_j|_{\Omega} \to \omega$ in $C^{\infty}_{\rm loc}(\Omega)$ as $j\to +\infty$. Then we have
$d_\omega \ge \limsup_{j\to +\infty} d_{\omega_j}$ on  $\Omega\times \Omega,$
where $d_{\omega_j}$ is the geodesic distance associated to $\omega_j$
\end{lem}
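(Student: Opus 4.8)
The statement is the standard lower semicontinuity of geodesic distance under $C^\infty_{\rm loc}$ convergence of the metrics, restricted to the open set $\Omega$. The plan is to fix two points $x,y\in\Omega$ and a path $\gamma$ in $\Omega$ joining them that is almost length-minimizing for $\omega$, and then to estimate $d_{\omega_j}(x,y)$ from above by $\ell_{\omega_j}(\gamma)$, which converges to $\ell_{\omega}(\gamma)$ because $\gamma$ has compact image in $\Omega$ where the convergence is uniform. Taking the infimum over $\gamma$ and then $\limsup$ over $j$ gives the claim.

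First I would reduce to a single path. Let $\e>0$ and choose a Lipschitz (indeed smooth) path $\gamma\colon[0,1]\to\Omega$ from $x$ to $y$ with $\ell_\omega(\gamma)\le d_\omega(x,y)+\e$; such a $\gamma$ exists by definition of the geodesic distance on the Kähler manifold $(\Omega,\omega)$. The image $K:=\gamma([0,1])$ is a compact subset of $\Omega$. Since $\omega_j|_\Omega\to\omega$ in $C^\infty_{\rm loc}(\Omega)$, in particular $\omega_j\to\omega$ uniformly on $K$ as symmetric tensors; hence $\|\dot\gamma(t)\|_{\omega_j}\to\|\dot\gamma(t)\|_{\omega}$ uniformly in $t\in[0,1]$. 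Therefore
\[
\ell_{\omega_j}(\gamma)=\int_0^1\|\dot\gamma(t)\|_{\omega_j}\,dt\;\xrightarrow[j\to+\infty]{}\;\int_0^1\|\dot\gamma(t)\|_{\omega}\,dt=\ell_\omega(\gamma).
\]
Since $\gamma$ is a path in $\Omega$ joining $x$ to $y$, we have $d_{\omega_j}(x,y)\le \ell_{\omega_j}(\gamma)$ for every $j$, so $\limsup_{j\to+\infty} d_{\omega_j}(x,y)\le \ell_\omega(\gamma)\le d_\omega(x,y)+\e$. Letting $\e\to0$ yields $\limsup_{j\to+\infty}d_{\omega_j}(x,y)\le d_\omega(x,y)$, which is exactly the asserted inequality on $\Omega\times\Omega$.

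There is essentially no obstacle here: the only mild point to be careful about is that $d_{\omega_j}$ is the geodesic distance on the whole manifold $X$ (or on $\Omega$), so that the competitor path $\gamma$, which lies in $\Omega$, is admissible for computing $d_{\omega_j}(x,y)$ — this is immediate since any path in $\Omega$ is a path in $X$, and its $\omega_j$-length is computed with the globally defined Kähler metric $\omega_j$. One should also note that the convergence $\omega_j\to\omega$ need only be used in $C^0_{\rm loc}$, which is weaker than the hypothesis. This is why the lemma is stated and proved as an elementary fact.
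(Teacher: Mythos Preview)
Your proof is correct and follows essentially the same approach as the paper: fix an almost $\omega$-minimizing path in $\Omega$, use compactness of its image together with local uniform convergence of the metrics to control the $\omega_j$-length of that same path, and conclude. The only cosmetic difference is that the paper phrases the convergence via a multiplicative bound $\omega|_U\ge(1-\e)^2\omega_j|_U$ on a relatively compact neighborhood $U$, whereas you pass directly to the limit of $\ell_{\omega_j}(\gamma)$; your remark that $C^0_{\rm loc}$ convergence suffices is also correct.
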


\begin{proof}
Let $x,y\in \Omega$, let $\ep>0$ and let $\gamma_\ep$ be a Lipschitz path  from $x$ to $y$ lying in $\Omega$ such that $\ell_\omega(\gamma_\ep)\le d_\omega(x,y)+\ep$. Let $U=U(\ep)\subset X$ be an open set containing the image of $\gamma_\ep$ and such that $U$ is relatively compact in $\Omega$. For $j\ge j_0(\ep)$, we have $\omega|_U \ge (1-\ep)^2 {\omega_j}|_U$, hence
\[d_{\omega}(x,y)\ge \ell_\omega(\gamma_\ep)-\ep \ge (1-\ep)\ell_{\omega_j}(\gamma_\ep)-\ep \ge  (1-\ep)d_{\omega_j}(x,y)-\ep,\]
hence the lemma follows.
\end{proof}

%We let $X'$ denote the quotient space of $X$ by the equivalence relation $x \sim y$ iff $d_{T}(x,y)=0$.
%The space $X'$ is naturally endowed with the metric 
%$d_{T}'$ induced by $d_{T}$. In the next sections we analyze whether\marginpar{H. Maybe reformulate? Since we treat one elliptic problem and one flow problem}
%the compact metric spaces $(X,d_{\omega_t})$ converge, in the Gromov-Hausdorff sense, towards
%$(X',d_{T}')$ as $t \rightarrow 0$.

 \subsection{$(X,d_T)$ as Ricci limit space}\label{sec-ricci-limit}
 
 In this section, we aim to show that in the case where $\psi_+=0$, the metric space $(X,d_{T})$ naturally arises as Gromov-Hausdorff limit of compact Kähler manifolds with uniform Ricci lower bound. The main input is Cheeger-Colding theory, and more specifically the results of \cite{CCII}, \cite{LS18} and \cite{CJN}. 
  
 \subsubsection{Notations and setup}
 
We keep the notations from Section~\ref{higher dim} and assume additionally that $\psi_+=0$. In order to lighten notation, we set $\psi:=\psi_-$.  In this section, we make the convention that $C$ denotes a positive constant which may change from line to line but only depends on our backgroup setup $(X, \omega_X, T)$. 

Next, we consider a sequence $(\psi_j)_{j\ge 1}$ of smooth $A\omega_X$-psh functions decreasing pointwise to $\psi$ \cite{D92}, and we consider the Kähler form $\omega_j:=\omega_X+dd^c \varphi_j$ normalized by $\sup_X \varphi_j=0$ which is the unique solution of the Monge-Ampère equation 
\begin{equation}
\label{MA phi_j}
(\omega_X+dd^c \varphi_j)^n=e^{-\psi_j+c_j}\omega_X^n
\end{equation}
provided by \cite{Yau78}, where $c_j$ is such that $\int_X e^{-\psi_j+c_j}\omega_X^n=\int_X \omega_X^n$. Note that there exists $p>1$ such that we have 
\begin{equation}
\label{Lp2}
\int_X e^{-p(\psi_j+c_j)} \omega_X^n \le C
\end{equation} by \eqref{Lp}. Moreover, one can choose $\psi_j$ such that the convergence $\psi_j\to \psi$ is locally smooth on $\Omega$. In particular, we have local, smooth convergence $\omega_j|_{\Omega}\to \omega$ as tensors.  \\

We fix a constant $B>0$ such that $\Ric \omega_X \ge -B\omega_X$. It is well-known (cf Theorem~\ref{thm:laplace} at time zero) that there is a constant $C>0$ such that 
\begin{equation}
\label{lap esti}
C^{-1}\omega_X \le \omega_j \le Ce^{-\psi_j} \omega_X
\end{equation}
for all $j\ge 1$. In particular, it follows from \eqref{MA phi_j} that 
\begin{equation}
\label{ricci lb}
\Ric \omega_j \ge -(A+B)C\omega_j. 
\end{equation}

\subsubsection{Associated metric spaces}
Let us now discuss the various metric spaces associated with our above setup. We define $d_X$ (resp $d_j$) to be the geodesic distance associated with the Kähler form $\omega_X$ (resp. $\omega_j$) on $X$. It follows from \eqref{lap esti} and Theorem~\ref{thm:diameter} that we have a two-sided bound
\begin{equation}
\label{dist holder}
C^{-1}d_X \le d_j \le C d_X^{\alpha}
\end{equation}
for some $\alpha \in (0,1]$ and all $j\ge 1$. In particular, up to extracting a subsequence, we can assume that we have convergence 
\[(X,d_j)\underset{j\to +\infty}\longrightarrow (X_\infty, d_\infty)\]
 to a compact metric space $(X_\infty, d_\infty)$ in the Gromov-Hausdorff topology. It is straightforward to see from \eqref{dist holder} that $X_\infty$ is homeomorphic to $X$ and that $(X,d_X)$ and $(X_\infty, d_\infty)$ are bi-Hölder equivalent. In what follows, we will identify $X_\infty$ with $X$. \\

We aim to compare $(X, d_\infty)$ with the metric space $(X,d_T)$ associated with the current $d_T$ as considered in  Section~\ref{higher dim} above. Note that $d_T$ is indeed a distance thanks to Proposition~\ref{prop dT} and the latter is bi-Hölder equivalent to $d_X$; in particular $d_T$ induces the same topology as $d_X$ on $X$. Moreover, $d_T$ coincides with $d_\omega$ on $\Omega\times \Omega$, i.e. $(X,d_T)$ is the metric completion of $(\Omega, d_\omega)$.

\subsubsection{Properties of $(X,d_T)$}
The main goal of this section is to show the following theorem which implies Corollary \ref{coro-B}:

\begin{thm}
\label{thm metric completion}
In the above setup, we have $d_T=d_\infty$. In other words, the sequence $(X,d_j)$ converges to $(X,d_T)=\overline{(\Omega, d_\omega)} $ in the Gromov-Hausdorff topology. Moreover, $Z\subset (X,d_T)$ has Hausdorff dimension at most $2n-2$. 
\end{thm}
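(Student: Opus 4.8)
The plan is to establish the two inequalities $d_T \le d_\infty$ and $d_T \ge d_\infty$ separately on $\Omega \times \Omega$ and then extend to $X \times X$ by density and continuity, using that both distances are bi-Hölder equivalent to $d_X$ (the former by Proposition~\ref{prop dT}, the latter by \eqref{dist holder}). For the inequality $d_\infty \le d_T$ on $\Omega \times \Omega$: since the convergence $\omega_j|_\Omega \to \omega$ is locally smooth on $\Omega$, Lemma~\ref{ineq dist} gives $d_\omega \ge \limsup_j d_{\omega_j} = \limsup_j d_j$ on $\Omega \times \Omega$. On the other hand, Gromov-Hausdorff convergence $(X,d_j)\to (X,d_\infty)$ together with the uniform bi-Hölder bound \eqref{dist holder} (which prevents collapsing and lets one identify the underlying sets) shows that for $x,y\in \Omega$ one has $d_\infty(x,y) = \lim_j d_j(x_j,y_j)$ along a subsequence for suitable $x_j \to x$, $y_j \to y$; combined with equicontinuity of $d_j$ (again a consequence of \eqref{dist holder}) this yields $d_\infty(x,y) \le \limsup_j d_j(x,y) \le d_\omega(x,y) = d_T(x,y)$.

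For the reverse inequality $d_T \le d_\infty$ on $\Omega \times \Omega$, I would use that the Ricci lower bound \eqref{ricci lb} holds uniformly in $j$, so $(X,d_j)$ is a sequence of compact Kähler manifolds with uniform Ricci lower bound, and by the Cheeger-Colding/Cheeger-Jiang-Naber theory \cite{CCII,CJN} together with the analytic rigidity for Kähler Ricci limit spaces \cite{LS18}, the limit $(X,d_\infty)$ is a non-collapsed Ricci limit space whose regular set carries a natural Kähler structure; more concretely, on the open set $\Omega$ the local smooth convergence $\omega_j \to \omega$ identifies the limit length structure on $\Omega$ with that of $(\Omega,\omega)$, so that $d_\infty|_{\Omega\times\Omega}$ dominates the intrinsic (length) distance of $(\Omega,\omega)$, which is exactly $d_\omega = d_T$ there. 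Concretely: given $x,y\in\Omega$ and a near-minimizing $d_\infty$-geodesic, one approximates it by minimizing $\omega_j$-geodesics, pushes a compact subsegment avoiding a neighborhood of $Z$ into $\Omega$, and uses $\omega_j \ge (1-\epsilon)\omega$ on compact subsets of $\Omega$ (from the local smooth convergence) plus the Hölder control \eqref{dist holder} near $Z$ to bound the $\omega$-length of a nearby path in $\Omega$; this is the same concatenation/rerouting argument as in the proof of part $(ii)$ of Proposition~\ref{prop dT}. Once $d_T = d_\infty$ on $\Omega\times\Omega$, continuity and density of $\Omega$ give equality on all of $X\times X$, and the identification $(X,d_T)=\overline{(\Omega,d_\omega)}$ is Proposition~\ref{prop dT}$(iii)$.

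Finally, for the Hausdorff dimension bound $\dim_{\mathcal H}(Z,d_T)\le 2n-2$: the complement $X\setminus\Omega = Z$ is a proper analytic subset, hence has real codimension at least $2$; moreover the Kähler structures $\omega_j$ have uniformly bounded volume $\int_X \omega_j^n = \int_X\omega_X^n$, and \eqref{Lp2} gives a uniform $L^p$ bound on the densities, so by the Cheeger-Colding theory the singular set $\mathcal S$ of the non-collapsed limit $(X,d_\infty)$ has Hausdorff dimension $\le 2n-2$ (the Kähler improvement over the general $2n-1$ bound, as in \cite{LS18}); since the regular set is contained in $\Omega$ up to a set of the right dimension (indeed $d_\infty$ restricted to $\Omega$ comes from the smooth Kähler manifold $(\Omega,\omega)$, so $\Omega$ lies in the regular stratum), one gets $Z \subset \mathcal S$ up to lower-dimensional pieces, whence the bound. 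The main obstacle, in my view, is the inequality $d_T \le d_\infty$: one must rule out that the Gromov-Hausdorff limit "shortcuts" through $Z$, i.e. that the limiting length metric on $\Omega$ could be strictly smaller than $d_\omega$; controlling this requires the non-collapsing and the a priori estimate $\omega_j \ge C^{-1}\omega_X$ from \eqref{lap esti} to guarantee that paths cannot be made arbitrarily short by pushing mass onto $Z$, together with the fact that $Z$ has $d_\infty$-measure zero in the appropriate sense. The rerouting argument near $Z$, already carried out in Proposition~\ref{prop dT}$(ii)$, is what makes this work, but transplanting it to the Gromov-Hausdorff limit (rather than the fixed current $T$) requires care in matching basepoints under the GH approximation.
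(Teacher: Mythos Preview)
Your easy direction $d_\infty \le d_T$ is fine and matches the paper. The gap is in the hard direction $d_T \le d_\infty$ and, relatedly, in the dimension bound.

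Your rerouting argument cannot work as stated. You want to take an $\omega_j$-geodesic $\gamma_j$ approximating a $d_\infty$-minimizer, use $\omega_j \ge (1-\varepsilon)\omega$ on a compact $K \Subset \Omega$, and control the part of $\gamma_j$ outside $K$ via the H\"older bound. But the H\"older bound $d_j \le C d_X^\alpha$ is an \emph{upper} bound; it does not prevent $\gamma_j$ from having large $\omega$-length near $Z$, where $T \le C e^{-\psi_-}\omega_X$ may blow up. Unlike the isolated-singularity situation of Theorem~\ref{thm:cvGHisolated}, here $Z$ is a positive-dimensional analytic set, so $\gamma_j$ can weave in and out of any neighborhood of $Z$ with no a priori control on how many excursions occur or how much $d_j$-length each costs. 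The argument in Proposition~\ref{prop dT}$(ii)$ you invoke works because there one compares $d_T$ with $d_\omega$, both built from the \emph{same} current, and paths are assumed to meet $Z$ finitely often; none of this is available for $d_\infty$.

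The paper's route is different and, crucially, reverses the order of the two statements: one first proves the Hausdorff dimension bound $\dim(Z, d_\infty) \le 2n-2$, and then invokes \cite[Theorem~3.7]{CCII} to conclude that for a dense set of pairs in $\Omega$ a $d_\infty$-minimizing geodesic avoids $Z$ \emph{entirely}; for such a geodesic $\gamma$ the local isometry $(\Omega, d_\omega) \hookrightarrow (X, d_\infty)$ gives $d_\infty(x,y) = \ell_\omega(\gamma) \ge d_T(x,y)$ immediately. Your dimension argument also has an error: you assert $Z \subset \cS$ up to small pieces, but the inclusion $\Omega \subset \cR$ can be strict (the paper gives the example $T = (-\log|z|)\, i\, dz\wedge d\bar z$ near $0\in\C$, where $0 \in Z \cap \cR$), so the Cheeger--Colding bound $\dim \cS \le 2n-2$ does \emph{not} by itself bound $\dim Z$. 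What is actually needed is to bound $\dim(Z\cap\cR)$, and this requires the Liu--Sz\'ekelyhidi holomorphic charts \cite{LS18} together with the bi-H\"older distortion estimate of \cite{CJN}: near a regular point one pushes $Z$ forward via a holomorphic $\varepsilon$-GH chart $F_\infty$ into a complex hypersurface in $\C^n$, and the H\"older exponent $(1+\varepsilon)^{-1}$ with $\varepsilon \to 0$ gives $\dim(Z\cap\cR) \le 2n-2$.
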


In another direction, let us mention that the general powerful result \cite[Theorem~5.1]{G+} implies the following
\begin{thm}[\cite{G+}]
The metric measure space $(X,d_T,T^n)$ is a non-collapsed $\mathrm{RCD}(2n,\lambda)$ space for some $\lambda \in \mathbb R$. 
\label{RCD}
\end{thm}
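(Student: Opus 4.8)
The plan is to obtain this as a direct application of the general statement \cite[Theorem~5.1]{G+}, after checking that $(X,d_T,T^n)$ meets its hypotheses.

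First I would recall, from Proposition~\ref{prop dT}, the metric structure at hand: since $\psi_+=0$ (which in particular has isolated analytic singularities), $d_T$ is a genuine distance, $(X,d_T)$ is a compact metric space bi-H\"older equivalent to $(X,d_X)$, and it is isometric to the metric completion $\overline{(\Omega,d_\omega)}$ of the (a priori incomplete) smooth K\"ahler manifold $(\Omega,\omega)$ with $\omega=T|_\Omega$. This realizes $(X,d_T,T^n)$ as the metric completion of a genuine K\"ahler manifold equipped with its volume form, which is the type of data that \cite{G+} is designed to treat. Then I would record the Ricci lower bound on the smooth part: the Monge--Amp\`ere equation \eqref{MA phi} with $\psi_+=0$ gives $\Ric(\omega)=\Ric(\omega_X)+dd^c\psi_-$ on $\Omega$, and combining $\Ric(\omega_X)\ge -B\omega_X$, $dd^c\psi_-\ge -A\omega_X$ with the Laplacian bound $\omega_X\le C\omega$ on $\Omega$ --- which is \eqref{lap esti} in the limit $j\to+\infty$, i.e. Theorem~\ref{thm:laplace} at time zero --- yields $\Ric(\omega)\ge\lambda\,\omega$ on $\Omega$ with $\lambda:=-(A+B)C$. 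It then remains to note that $T^n=e^{-\psi_-}\omega_X^n$ is a finite Radon measure with density in $L^p(\omega_X^n)$ for some $p>1$ by \eqref{Lp}, that $T^n$ gives no mass to the proper analytic subset $Z=X\setminus\Omega$, and that $\dim_{\mathcal H}(Z,d_T)\le 2n-2$ by Theorem~\ref{thm metric completion}. Feeding these inputs into \cite[Theorem~5.1]{G+} then gives exactly the assertion that $(X,d_T,T^n)$ is a non-collapsed $\mathrm{RCD}(2n,\lambda)$ space.

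As a sanity check, and to locate where the real content lies, it is worth noting that the conclusion can also be approached directly through the approximation set up in Section~\ref{sec-ricci-limit}: the smooth compact K\"ahler manifolds $(X,\omega_j)$ satisfy $\Ric(\omega_j)\ge\lambda\,\omega_j$ by \eqref{ricci lb}, have constant volume and uniformly bounded diameter by Theorem~\ref{thm:diameter} together with \eqref{Lp2}, hence --- being smooth closed Riemannian manifolds --- are non-collapsed $\mathrm{RCD}(2n,\lambda)$ spaces; by Theorem~\ref{thm metric completion} they converge to $(X,d_T)$ in the Gromov--Hausdorff sense, and $\omega_j^n\to T^n$ weakly by stability of the complex Monge--Amp\`ere operator, so that stability of the non-collapsed $\mathrm{RCD}$ condition under measured Gromov--Hausdorff convergence delivers the conclusion. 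The one point in this scheme that genuinely requires care --- and precisely the one \cite{G+} is invoked to settle cleanly --- is the identification of the limiting reference measure with $T^n$, equivalently of $T^n$ with the $2n$-dimensional Hausdorff measure of $(X,d_T)$ up to the universal normalization of the K\"ahler volume form; it amounts to showing that $Z$ is negligible in the strong metric-measure sense required for the non-collapsed structure to pass to the limit, which is where the large codimension (zero capacity) of the analytic set $Z$ enters. The curvature estimate of the previous paragraph is, by comparison, routine bookkeeping.
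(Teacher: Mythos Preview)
Your proposal is correct, and its substance matches the paper's. The paper's own justification is the one-line remark that $T$ is approximated by the K\"ahler forms $\omega_j$ with the uniform Ricci lower bound \eqref{ricci lb}, and that this is precisely the input required by \cite[Theorem~5.1]{G+}; this is what you present as the ``sanity check'' in your second paragraph, and the paper does not spell out the direct limit-space verifications (metric completion structure, Ricci bound on $\Omega$, dimension of $Z$) that you list in your first paragraph. In particular, the paper does \emph{not} feed the bound $\dim_{\mathcal H}(Z,d_T)\le 2n-2$ from Theorem~\ref{thm metric completion} into the argument; on the contrary, it observes right after the statement that Theorem~\ref{RCD} combined with \cite{Sz25} gives an \emph{alternative} route to that dimension bound. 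So your first paragraph, while not wrong, goes in the opposite direction from the paper's logic and likely assumes more than \cite[Theorem~5.1]{G+} actually needs.
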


Indeed, $T$ can be approximated by the Kähler forms $\omega_j$ which satisfy the Ricci lower bound \eqref{ricci lb} hence the assumptions in {\it loc. cit.} are clearly met. Note that Theorem~\ref{RCD} above combined with the recent result \cite[Theorem~3]{Sz25} enable to recover the dimension bound $\dim_{\mathcal H}(Z)\le 2n-2$ in Theorem~\ref{thm metric completion}.

\begin{proof}[Proof of Theorem~\ref{thm metric completion}]

Recall from Lemma~\ref{ineq dist} that   $d_\omega \ge d_\infty$ on $\Omega\times \Omega$. By Proposition~\ref{prop dT}, this implies $d_T\ge d_\infty$ globally on $X\times X$. Therefore, the main task is to prove
\begin{equation}
\label{reverse ineq}
d_T \le d_{\infty}.
\end{equation}
We divide the proof in two steps where we crucially rely respectively on results of Cheeger-Colding \cite{CCII} and Cheeger-Jiang-Naber \cite{CJN}. 

\bigskip

{\bf Step 1. } {\it Reduction to a dimension estimate.}

\medskip

The sequence of compact Kähler manifolds $(X, \omega_j)$ satisfies
\begin{equation}
\label{geom bounds}
\Ric \omega_j \ge -C\omega_j, \quad \mathrm{vol}(X,\omega_j)=[\omega_X]^n, \quad \mathrm{diam}(X,\omega_j) \le C
\end{equation}
thanks to \eqref{ricci lb}, \eqref{MA phi_j} and \eqref{dist holder}. 
It follows from Bishop-Gromov comparison that 
\begin{equation}
\label{non collapse}
\mathrm{vol}(B_{\omega_j}(x,r))\ge C^{-1} r^{2n}
\end{equation}
for any $x\in X$ and any $0<r<1$. Let us introduce the metric regular set $\cR$ of the Gromov-Hausdorff limit $(X,d_\infty)$ (i.e. the set of points in $X$ all of whose tangent cones are isometric to the flat $\mathbb C^n$) and its complement, the singular set $\cS:=X\setminus\cR$. Note that $\cS$ may not be closed. The local smooth convergence $\omega_j|_{\Omega}\to \omega$ induces an injective, locally isometric embedding 
\[j:(\Omega, d_\omega)\longrightarrow (X,d_\infty)\]
and the main goal is to show that $j$ is actually isometric, from which the first part of Theorem~\ref{thm metric completion} follows since $j(\Omega)=\Omega$ is dense. From this local isometric embedding, we see that $\Omega\subset \cR$ but the inclusion may be 
strict.\footnote{
A typical local example in complex dimension one
% would be locally given near the origin of $\mathbb C$ by 
is $T=(-\log |z|) idz\wedge d\bar z$. Indeed, its Ricci curvature has zero Lelong numbers hence the associated metric space is regular at $0$, cf \cite[Proposition~4.1]{LS18}.
} 
The following lemma shows that geodesic connectedness of $X\setminus Z$ is the key. 

\begin{lem}
\label{convexity}
Let $x\in \Omega$. Assume that there exists a dense set $\Omega'\subset \Omega$ such that for any $y\in \Omega'$, there exists a minimal geodesic for $d_\infty$ from $x$ to $y$ which lies in $\Omega$. Then $j$ is an isometric embedding.
\end{lem}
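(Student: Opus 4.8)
The strategy is to show that the locally isometric embedding $j$ is globally distance–preserving, i.e. that $d_\infty(a,b)=d_\omega(a,b)$ for all $a,b\in\Omega$. One inequality comes for free: Lemma~\ref{ineq dist} applied to the sequence $\omega_j$ (which converges to $\omega$ in $C^\infty_{\mathrm{loc}}(\Omega)$) gives $d_\omega\ge d_\infty$ on $\Omega\times\Omega$, so only $d_\omega\le d_\infty$ remains. Two structural facts will be used repeatedly: first, by \eqref{dist holder} one has $C^{-1}d_X\le d_\infty\le Cd_X^{\alpha}$, so $d_\infty$ induces the usual topology on $X$ and in particular $\Omega$ is open in $(X,d_\infty)$; second, $\omega|_\Omega$ being a genuine Kähler metric, $d_\omega$ is continuous on $\Omega\times\Omega$, and $d_\infty$ is continuous on $X\times X$.

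The heart of the matter is the following. Fix $y\in\Omega'$ and let $\sigma\colon[0,L]\to X$ be a minimal $d_\infty$–geodesic from $x$ to $y$ parametrised by arclength, so $L=d_\infty(x,y)$; by hypothesis its image $\Gamma:=\sigma([0,L])$ is a compact subset of $\Omega$. Since $j$ is a local isometry, each point of $\Gamma$ has a neighbourhood $U$ on which $d_\infty=d_\omega$; covering $\Gamma$ by finitely many such neighbourhoods and taking a Lebesgue number $\rho>0$ of the cover, we get $d_\infty=d_\omega$ on $B_\infty(p,\rho)\times B_\infty(p,\rho)$ for every $p\in\Gamma$. Now subdivide $0=t_0<t_1<\dots<t_N=L$ with $t_{i+1}-t_i<\rho$ and put $p_i:=\sigma(t_i)\in\Gamma$. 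Since sub-segments of a minimal geodesic are minimal, $d_\infty(p_i,p_{i+1})=t_{i+1}-t_i<\rho$, hence $p_{i+1}\in B_\infty(p_i,\rho)$ and $d_\omega(p_i,p_{i+1})=d_\infty(p_i,p_{i+1})=t_{i+1}-t_i$. Telescoping, $d_\omega(x,y)\le\sum_i d_\omega(p_i,p_{i+1})=L=d_\infty(x,y)$; combined with $d_\omega\ge d_\infty$ this yields $d_\omega(x,y)=d_\infty(x,y)$, and $\sigma$ is in fact also a minimal $d_\omega$–geodesic. Letting $y$ vary over the dense set $\Omega'$ and using continuity of $d_\omega$ and $d_\infty$, we obtain $d_\omega(x,\cdot)=d_\infty(x,\cdot)$ on $\Omega$, and by continuity of $\hat d_{\omega}$ this identity persists on all of $X$.

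It remains to upgrade this to $d_\omega=d_\infty$ on $\Omega\times\Omega$. The key observation is that every sub-segment $\sigma|_{[s,L]}$ of the geodesics produced above is again a minimal $d_\infty$–geodesic joining two points of $\Omega$ and lying entirely in $\Omega$, so the argument of the previous paragraph applies verbatim to any pair of points lying on such a geodesic: for those pairs $d_\omega=d_\infty$. One then checks that these geodesics sweep out a dense subset of $\Omega$ — for a dense set of $a\in\Omega$ the minimal $d_\infty$–geodesic from $x$ to $a$ is unique, stays in $\Omega$, and (as $a$ is not a cut point and $\omega$ is smooth near $a$) extends a short way in $\Omega$ to a point of $\Omega'$, so that $a$ lies on one of our geodesics; density and continuity then force $d_\omega=d_\infty$ everywhere on $\Omega\times\Omega$, i.e. $j$ is an isometric embedding. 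I expect this last step — passing from a single base point $x$ to arbitrary pairs — to be the main obstacle: it is precisely here that one needs the thinness of $Z$ inside the limit space $(X,d_\infty)$, and not merely inside $(X,d_X)$, so that the Cheeger–Colding codimension estimate for the singular set (the companion ``dimension estimate'' step in the proof of Theorem~\ref{thm metric completion}), or alternatively the density of the set of base points satisfying the hypothesis, is the natural ingredient to bring in.
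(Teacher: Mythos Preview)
Your core argument—covering the minimal $d_\infty$-geodesic by balls on which $j$ is an isometry and telescoping—is correct and is an explicit unpacking of the paper's one-line proof: since $\gamma\subset\Omega$ and $j$ is locally isometric, one has directly $d_\infty(x,y)=\ell_\omega(\gamma)\ge d_T(x,y)$ (the equality because a local isometry preserves lengths of curves, the inequality by definition of $d_T$); combined with the already-known $d_T\ge d_\infty$ this gives $d_T(x,y)=d_\infty(x,y)$ for $y\in\Omega'$, hence for all $y$ by density and continuity.

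You are right to flag the passage from a single base point $x$ to arbitrary pairs; the paper's own proof glosses over it too (it asserts $d_\infty\ge d_T$ on $\Omega'\times\Omega'$ without further comment). However, your geodesic-sweeping attempt does not close the gap: even if a dense set of points $a$ lie on geodesics emanating from $x$, the sub-segment argument only yields $d_\omega=d_\infty$ for pairs lying on the \emph{same} geodesic through $x$, not for arbitrary pairs $(a,b)$. The correct resolution is precisely the one you mention parenthetically in your last sentence: in the application via Lemma~\ref{dimbdd}, the Cheeger--Colding connectedness theorem furnishes the hypothesis of Lemma~\ref{convexity} for \emph{every} $x\in\Omega$ (with $\Omega'$ possibly depending on $x$), and then the first paragraph of the argument, repeated for each such $x$, gives $d_T\le d_\infty$ on all of $\Omega\times\Omega$, hence on $X\times X$ by continuity. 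You should simply have committed to that observation and dropped the sweeping argument.
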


\begin{proof}[Proof of Lemma~\ref{convexity}]
Let $\gamma$ be such a minimal geodesic connecting $x$ and $y$, lying in $\Omega$. Since $j$ is locally isometric, we have
$d_\infty(x,y)=\ell_\omega(\gamma) \ge d_T(x,y)$
by definition of $d_T$. Therefore we have  $d_\infty\ge d_T$ on $\Omega'\times \Omega'$, hence \eqref{reverse ineq} holds everywhere by density of $\Omega' $ in $X$, and the result follows. 
\end{proof}

The next result shows that the connectedness property in Lemma~\ref{convexity} can be reduced to a dimension bound. 

\begin{lem}
\label{dimbdd}
Assume that $Z\subset (X,d_\infty)$ satisfies $\dim(Z\cap \cR) <2n-1$. Then the assumption in Lemma~\ref{convexity} is satisfied. 
\end{lem}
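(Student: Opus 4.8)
The plan is to prove that for \emph{every} $x\in\Omega$ the set
\[
G_x:=\bigl\{y\in X:\ \text{some minimizing $d_\infty$-geodesic from $x$ to $y$ avoids }Z\bigr\}
\]
has full $\mathcal H^{2n}$-measure in $(X,d_\infty)$. Since $Z$ is a proper analytic subset it has real codimension $\ge 2$, so $\mathcal H^{2n}(Z)=0$; hence $\Omega':=G_x\cap\Omega$ would be dense in $X$, a fortiori in $\Omega$, and for $y\in\Omega'$ the corresponding minimizing geodesic lies in $X\setminus Z=\Omega$ — which is exactly the hypothesis of Lemma~\ref{convexity}. (Alternatively one could produce just one good $x$ and invoke an ``a.e.\ $(x,y)$'' statement, as the conclusion of Lemma~\ref{convexity} does not reference $x$; but the argument below is already uniform in $x$.)

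First I would record two inputs. First, $\mathcal H^{2n-1}_{d_\infty}(Z)=0$: indeed $Z=(Z\cap\cR)\cup(Z\cap\cS)$, the hypothesis gives $\dim_{\mathcal H}(Z\cap\cR)<2n-1$ whence $\mathcal H^{2n-1}(Z\cap\cR)=0$, while $Z\cap\cS\subset\cS$ and $\dim_{\mathcal H}\cS\le 2n-2$ for a non-collapsed Ricci limit space by Cheeger--Colding \cite{CCII}, so $\mathcal H^{2n-1}(\cS)=0$. Second, $\Omega$ is open in $(X,d_\infty)$, because $d_\infty$ and $d_X$ induce the same topology by \eqref{dist holder}; hence $\rho_0:=d_\infty(x,Z)>0$ for any $x\in\Omega$. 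This positivity of $\rho_0$ keeps $x$ uniformly away from $Z$ and is what makes the estimate below independent of $x$.

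The core of the proof is a standard ``solid angle'' / cone volume estimate, available in $(X,d_\infty)$ thanks to Cheeger--Colding theory — existence of minimizing geodesics, the relative Bishop--Gromov volume comparison, the coarea formula for the distance function from $x$, and monotonicity of the areas of distance spheres under the geodesic flow — or equivalently thanks to the $\mathrm{RCD}(2n,\lambda)$ structure recorded in Theorem~\ref{RCD}. Given $\e>0$, using $\mathcal H^{2n-1}(Z)=0$ I would cover $Z$ by countably many balls $B_{d_\infty}(z_i,r_i)$ with $r_i<\rho_0/2$ and $\sum_i r_i^{2n-1}<\e$. Let $W_i$ be the union of all minimizing geodesics issued from $x$ that meet $B(z_i,r_i)$. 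Since sub-arcs of minimizing geodesics are minimizing, every endpoint $y$ of such a geodesic has $d_\infty(x,y)\ge d_\infty(x,z_i)-r_i$, and by the triangle inequality the part of such a geodesic lying at distance in $[d_\infty(x,z_i)-r_i,\ d_\infty(x,z_i)+r_i]$ from $x$ is contained in $B(z_i,3r_i)$, of volume $\le Cr_i^{2n}$ by Bishop--Gromov. Beyond that radius $W_i$ is the forward cone swept by those geodesics, so the coarea formula and the monotonicity of sphere areas bound the area of each slice of the cone by its value just past radius $d_\infty(x,z_i)+r_i$, which is $\le Cr_i^{2n-1}$ by the inner-shell estimate; integrating over radii up to $\mathrm{diam}(X,d_\infty)$ yields $\mathcal H^{2n}(W_i)\le C\,r_i^{2n-1}$, with $C$ depending only on $n$, $\lambda$, $\mathrm{diam}(X,d_\infty)$ and $\rho_0$.

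Summing over $i$, the set $X\setminus G_x$ of points $y$ for which \emph{every} minimizing geodesic from $x$ meets $Z$ is contained in $\bigcup_i W_i$, hence has $\mathcal H^{2n}$-measure $\le C\e$; letting $\e\to 0$ shows it is $\mathcal H^{2n}$-null, so $G_x$ has full measure and the plan is complete. I expect the only genuinely delicate point to be justifying the cone/solid-angle estimate in the singular space $(X,d_\infty)$: the ingredients (Bishop--Gromov, coarea, the structure of minimizing geodesics and of distance spheres) are standard in Cheeger--Colding and $\mathrm{RCD}$ theory, but one must quote them carefully, and the two places where the hypotheses of the lemma really enter are that $Z$ is closed (so $\Omega$ is open and $\rho_0>0$) and that $\mathcal H^{2n-1}(Z)=0$, the latter being precisely where the assumption $\dim(Z\cap\cR)<2n-1$ is used.
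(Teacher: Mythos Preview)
Your proposal is correct and follows the same strategy as the paper: combine the hypothesis with the Cheeger--Colding bound $\dim_{\mathcal H}\cS\le 2n-2$ to obtain $\mathcal H^{2n-1}_{d_\infty}(Z)=0$, then invoke Cheeger--Colding theory to get geodesic connectedness of the complement of $Z$. The only difference is one of packaging: the paper applies \cite[Theorem~3.7]{CCII} directly as a black box (with $B=Z$, noting that in the non-collapsed situation the measure $\nu_{-1}$ there is $\mathcal H^{2n-1}$), whereas you sketch the cone/solid-angle volume estimate that is the engine behind that theorem. Your sketch is sound in outline, and you rightly flag that the Bishop--Gromov, coarea and area-monotonicity ingredients require care in the singular limit setting --- that care is precisely what \cite{CCII} supplies, so citing it (as the paper does) is the cleaner route.
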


\begin{proof}[Proof of Lemma~\ref{dimbdd}]
Given the bounds \eqref{geom bounds} and \eqref{non collapse},  the results of \cite{CCI} show the upper bound 
\begin{equation}
\label{dim S} 
\dim \cS \le 2n-2.
\end{equation}
for the Hausdorff dimension of the singular set. Therefore, the assumption in the lemma implies that $\dim Z<2n-1$. 
The lemma now follows from \cite[Theorem~3.7]{CCII} applied to the closed subset $B=Z$ (note that in our non-collapsed situation, the measure $\nu_{-1}$ in {\it loc. cit.} is simply the $2n-1$-dimensional Hausdorff measure). 
\end{proof}

\bigskip

{\bf Step 2. } {\it The dimension estimate.}

\medskip

In this second step, we prove the estimate
\begin{equation}
\label{dim Z}
\dim(Z\cap \cR) \le 2n-2
\end{equation}
which will conclude the proof of the theorem by Lemma~\ref{dim S} in the previous step. Although $Z$ is a complex analytic subset hence has Hausdorff dimension at most $2n-2$ with respect to the euclidean metric, the statement is not obvious since on $\cR$ the metric $d_\infty$ may still be singular for lack of a Ricci upper bound on the $\omega_j$.

Let $p\in Z\cap \cR$ and let $\ep>0$ to be determined later. By definition of $\cR$, there exists $\delta=\delta(\ep)$ and 
$j_0=j_0(\ep)$ such that for $j\ge j_0$ we have $d_{\rm GH}(B_{\delta^{-2}\omega_j}(p,1), B_{\C^n}(0,1))\le \ep$. Thanks to \cite[Theorem~1.4]{LS18}, one can find such an $\ep=\ep(n)>0$ so that up to decreasing $\delta$ 
further there exists a holomorphic chart
\[F_j:B_{\delta^{-2}\omega_j}(p,1)\to \C^n\]
which is an $\ep$-GH approximation to its image. We will normalize $F_j$ by the condition $F_j(p)=0$. Appealing to (the proof of) \cite[Theorem~7.10]{CJN}, we see that $F_j$ satisfies the following distorsion estimate
\begin{equation}
\label{distorsion}
(1-\ep) \delta d_j(x,y)^{1+\ep} \le |F_j(x)-F_j(y)|\le (1+\ep)\delta d_j(x,y)
\end{equation}
for any $x,y\in B_{\delta^{-2}\omega_j}(p,1)$ and $j\ge j_0$, maybe up to decreasing $\ep,\delta$ again. Note that the image of $F_j$ contains (and is contained) in a ball of fixed radius centered at $0$. The above estimate allows us to find a limit map
$F_\infty:B_{\delta^{-1}d_\infty}(p,1)\to \C^n$
which satisfies 
for all $ \, x,y\in B_{\delta^{-1}d_\infty}(p,1)$, 
%{\footnotesize
\begin{equation}
\label{distorsion 2}
(1-\ep) \delta d_\infty(x,y)^{1+\ep} \le |F_\infty(x)-F_\infty(y)|\le (1+\ep)\delta d_\infty(x,y),  
\end{equation}
%}
In particular, $F_\infty$ induces a bi-Hölder homeomorphism onto its image. 

 Without loss of generality, one can assume that there exists a holomorphic function $\sigma$ on $B_{\omega_j}(p,\delta)$ such that $Z \cap B_{\omega_j}(p,\delta)\subset (\sigma=0)$ and $|\sigma|\le 1$, $|\sigma(q)|=\eta$ for some fixed $q\in B_{\omega_j}(p,\frac \delta 2)$ and $\eta>0$ . We set $\sigma_j:=(F_j)_*\sigma$. Then $\sigma_j$ subsequentially converges to a non-zero holomorphic function $\sigma_\infty$ on the image of $F_\infty$. If we set $Z_\infty := (\sigma_\infty=0) \subset \mathbb C^n$, then $F_\infty(Z)\subset Z_\infty$. In other words, $Z\cap B_{d_\infty}(p,\delta)$ is contained in the image of $Z_\infty$ by the map $F^{-1}_\infty$ which is Hölder continuous of exponent $\frac{1}{1+\ep}$. Since $Z_\infty$ is a complex hypersurface in $\mathbb C^n$, the estimate \eqref{distorsion 2} implies that 
$\mathcal H^{(2n-2)(1+\ep)}(Z \cap B_{d_\infty}(p,\delta))<+\infty.$
 Since $\ep$ can be taken arbitrarily small, the above is easily seen to imply \eqref{dim Z}, which concludes the proof of the theorem. 
\end{proof}

 \section{Gromov-Hausdorff convergence of the flow} \label{sec:GH}

 \subsection{Notation and upshot}
 \label{sec GH KRF}
 
As before, we let $(X, \omega_X)$ be a compact Kähler manifold, we let $T=\omega_X+dd^c \f$ be a closed positive $(1,1)$-current such that 
\begin{enumerate}[label=$\bullet$]
\item $\f$ is continuous and qpsh,
\item $T^n=e^{\p_+-\p_-} \omega_X^n$ for some qpsh functions $\p_{\pm}$,
\item $e^{-\p_-} \in L^1(\omega_X^n)$.
\end{enumerate}
For $t>0$ we consider the unique K\"ahler forms 
$\omega_t=\omega_X+dd^c \f_t$, where 
$$
(\omega_X+dd^c \f_t)^n=e^{\dot{\f_t}} \omega_X^n
\; \; \text{ and } \; \;
\f_t \stackrel{t \rightarrow 0}{\longrightarrow} \f.
$$
Let us denote by $d_t$ the geodesic distance induced by $\omega_t$ on $X$. It follows from Corollary \ref{cor:equicont1}  that the  compact metric spaces  
 $(X,d_{t})$ are relatively compact in the Gromov-Hausdorff topology, cf Proposition~\ref{pro:equicont} and the discussion below.
 We will study in this final section whether the latter  converge, as $t \rightarrow 0$,
 to a unique  compact metric space associated to $T$.

\subsection{Continuity of Ricci currents}

\subsubsection{Semicontinuity of the distances}

\begin{prop} \label{pro:equicont}
The functions $d_t: X \times X \rightarrow \R^+$ are equicontinuous.
\end{prop}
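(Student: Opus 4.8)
The plan is to show equicontinuity of the family $\{d_t\}_{t>0}$ on the compact set $X\times X$ by establishing a uniform Hölder modulus of continuity. The key point is Corollary~\ref{cor:equicont1}, which gives constants $\alpha\in(0,1)$ and $C>0$, \emph{independent of $t$}, such that $d_t(x,y)\le C\,d_{\omega_X}(x,y)^\alpha$ for all $x,y\in X$ and all $t>0$. This immediately controls how much $d_t$ can change when one moves one of its arguments.

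First I would fix $t>0$ and $(x_1,y_1),(x_2,y_2)\in X\times X$. By the triangle inequality for the distance $d_t$ we have
\[
|d_t(x_1,y_1)-d_t(x_2,y_2)|\le d_t(x_1,x_2)+d_t(y_1,y_2).
\]
Then I would apply Corollary~\ref{cor:equicont1} to each term on the right, obtaining
\[
|d_t(x_1,y_1)-d_t(x_2,y_2)|\le C\,d_{\omega_X}(x_1,x_2)^\alpha+C\,d_{\omega_X}(y_1,y_2)^\alpha.
\]
Since $C$ and $\alpha$ do not depend on $t$, the right-hand side gives a common modulus of continuity $\omega(\delta):=2C\delta^\alpha$ valid for the distance function $d_{X\times X}$ on $X\times X$ (using, say, the max or sum metric on the product), which tends to $0$ as $\delta\to0$. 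This is precisely equicontinuity of the family $\{d_t\}_{t>0}$.

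There is essentially no obstacle here: the entire content is packaged in Corollary~\ref{cor:equicont1}, whose proof in turn rests on Lemma~\ref{lem:convexflot} (the $L^p$-norm of the Monge–Ampère density is non-increasing along the flow, so $\int_X f_t^p\,\omega_X^n\le\int_X f^p\,\omega_X^n$) combined with the diameter/Hölder estimate of Theorem~\ref{thm:diameter}. The only minor point to be careful about is that one wants the estimate uniformly for \emph{all} $t>0$ and not merely on a bounded interval; this is fine because the $L^p$ bound on $f_t$ is global in time, so Theorem~\ref{thm:diameter} applies with a $t$-independent constant. Once equicontinuity is in hand, combined with the uniform bound $\operatorname{diam}(X,d_t)\le C$ (also from Theorem~\ref{thm:diameter}), the Arzelà–Ascoli theorem yields that $\{d_t\}$ is relatively compact in $C^0(X\times X)$, which is the statement invoked in the discussion following the proposition.
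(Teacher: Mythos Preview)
Your proof is correct and follows essentially the same approach as the paper: invoke Corollary~\ref{cor:equicont1} for the uniform H\"older bound $d_t\le C\,d_X^\alpha$, then use the triangle inequality for $d_t$ to control $|d_t(x,y)-d_t(x',y')|$ by $d_t(x,x')+d_t(y,y')$. The paper's write-up is slightly terser but otherwise identical.
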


\begin{proof}
Corollary \ref{cor:equicont1} ensures that
$d_t \leq C d_X^{\alpha}$, where $C>0$ and $0 < \alpha \leq 1$.
For $(x,y), (x',y') \in X$, the triangle inequality yields
\begin{eqnarray*}
\left| d_t(x,y)-d_t(x',y') \right| &\leq &
\left| d_t(x,y)-d_t(x,y') \right|+\left| d_t(x,y')-d_t(x',y') \right| \\
& \leq & d_t(y,y')+d_t(x,x') \leq C d_X(x,x')^{\alpha}+C d_X(y,y')^{\alpha} \\
& \leq & C' \left[ d_X(x,x')+d_X(y,y') \right]^{\alpha}.
\end{eqnarray*}
\end{proof}

As $X \times X$ is compact and the diameters $(X,d_t)$ are bounded, it follows
from Arzela-Ascoli theorem that a subsequence 
$d_{t_i}$ uniformly converges on $X \times X$, as $t_i \rightarrow 0$, to a function
$d_0$ which is a semi-distance.
Thus $(X,d_t)$ converges, in the Gromov-Hausdorff sense, to
the quotient metric space $(X',d_0')$ induced by $d_0$
(see \cite[Example 7.4.4]{BBI01}).

We expect that $d_0$ coincides with the (semi-)distance $d_T$ studied in Section \ref{sec:metricspace}.
The following example shows that the weak convergence, as $t \rightarrow 0$, of $\omega_t$ towards $T$
is  too weak to ensure that $d_t$ converges to $d_{T}$ 
(see \cite{Top21} for an application to the Ricci flow).

\begin{exa}
Assume $(X,\omega_X)$ is a compact Riemann surface and fix, for each $j \in \N$, and finite set 
$\Lambda_j=\{a_1^{(j)},\ldots,a_{N_j}^{(j)} \} \subset X$ such that each point of $X$ lies at $d_{\omega_X}$ distance
at most $2^{-j}$ from $\Lambda_j$. Let $\gamma_{\ell k}^{(j)}$ be a minimizing geodesic for $\omega_X$ joining
$a_{\ell}^{(j)}$ to $a_k^{(j)}$ and set $\Sigma_j=\cup_{\ell,k} \gamma_{\ell k}^{(j)}$.
Construct a smooth function $\p_j$ on $X$ such that
\begin{itemize}
\item[$\bullet$] $-\ln 4=\p_j$ on $\Sigma_j$ and $-\ln 4 \leq \p_j \leq  \ln 4$ on $X$;
\item[$\bullet$] $\int_X e^{\p_j} \omega_X=\int_X \omega_X$ and $\int_X \left|e^{\p_j}-1 \right| \omega_X \leq 2^{-j}$.
\end{itemize}

By construction $\omega_j =e^{\p_j} \omega_X$ weakly converges to $\omega_X$ since 
$\p_j$ is uniformly bounded and converges to $0$ in $L^1(X)$, but we are going
to show that the distances $d_j$  associated to $\omega_j =e^{\p_j} \omega_X$ 
uniformly converge to $d/2$, where $d=d_{\omega_X}$.
Observe indeed that $\frac{1}{2} d \leq d_j \leq 2 d$
and for all $(x,y) \in \Sigma_j$,
$$
d_j(x,y) \leq \ell_{\omega_j}(\gamma_{x y}^{(j)})=\frac{1}{2} \ell_{\omega_X}(\gamma_{x y}^{(j)})=\frac{1}{2}d(x,y).
$$
Now if $(x,y) \in X$ one can find  $(x',y') \in \Sigma_j$ such that 
$d(x,x'), d(y,y') \leq 2^{-j}$ hence
\begin{eqnarray*}
\frac{1}{2} d(x,y) \leq d_j(x,y) & \leq &  d_j(x,x')+d_j(x',y')+d_j(y',y) \\
& \leq & 2 \,d(x,x')+\frac{1}{2} d(x',y')+ 2\,d(y',y) \\
& \leq & 4 \cdot2^{-j}+\frac{1}{2} \left[ d(x',x)+d(x,y)+d(y,y') \right] 
 \leq  5 \cdot 2^{-j}+\frac{1}{2} d(x,y).
\end{eqnarray*}
\end{exa}

Note that the functions $\p_j$ in the above example are far from being differences of $A\omega_X$-sh functions for any given $A>0$ independent of $j$.
We shall take advantage of this property in the next section and show that the
Ricci curvatures weakly converge to the signed current ${\rm Ric}(T)$.

\subsubsection{Ricci curvatures}

Recall that the Ricci curvature of $\omega_t$ is the differential form
$$
{\rm Ric}(\omega_t)={\rm Ric}(\omega_X)-dd^c \log \omega_t^n/\omega_X^n={\rm Ric}(\omega_X)-dd^c \dot{\f_t},
$$
while the Ricci curvature of $T$ is ${\rm Ric}(T)={\rm Ric}(\omega_X)-dd^c (\p_+-\p_-)$,
a closed bidegree $(1,1)$-current of order zero
(as a difference of positive closed currents). The following result establishes the first part of Theorem \ref{thmB}.

\begin{thm} \label{thm:cvricci}
The functions $\dot{\f}_t$ converge, as $t \rightarrow 0$, to $\p_+-\p_-$ in $L^1(\omega_X^n)$. In particular, the Ricci curvatures ${\rm Ric}(\omega_t)$ weakly converge to ${\rm Ric}(T)$ as $t \rightarrow 0$.
\end{thm}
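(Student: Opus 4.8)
\emph{Sketch of proof.} The second assertion is a formal consequence of the first: since ${\rm Ric}(\omega_t)-{\rm Ric}(T)=-dd^c\bigl(\dot{\f}_t-(\p_+-\p_-)\bigr)$ and $dd^c$ maps $L^1(\omega_X^n)$ continuously into the space of currents, the $L^1$-convergence $\dot{\f}_t\to\p_+-\p_-$ implies ${\rm Ric}(\omega_t)\to{\rm Ric}(T)$ weakly as currents. So the whole content is the $L^1$-convergence of $\dot{\f}_t$. To tame the family $\{\dot{\f}_t\}_{0<t\le 1}$ I would use Lemma~\ref{lem:bddphidot}, which gives $\p_+-C\le\dot{\f}_t\le C-\p_-$; as $\p_\pm$ are qpsh on the compact manifold $X$ they lie in every $L^q(\omega_X^n)$, $q<\infty$, so $\{\dot{\f}_t\}$ is bounded in each $L^q(\omega_X^n)$, and in particular uniformly integrable. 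By Vitali's convergence theorem it is then enough to prove that $\dot{\f}_t\to\p_+-\p_-$ in $\omega_X^n$-measure. The basic convergence input is Bedford--Taylor theory: by Theorem~\ref{thm:GZ17}, $\f_t\to\f$ uniformly on $X$ with $\f_t,\f\in\PSH(X,\omega_X)\cap C^0(X)$, and the complex Monge--Amp\`ere operator is continuous along uniformly convergent sequences of bounded potentials, so $e^{\dot{\f}_t}\omega_X^n=\omega_t^n\to T^n=e^{\p_+-\p_-}\omega_X^n$ weakly as Radon measures. The uniform bound $\int_X e^{p\dot{\f}_t}\omega_X^n\le\int_X e^{p(\p_+-\p_-)}\omega_X^n<+\infty$ (Lemma~\ref{lem:convexflot} with $w(s)=s^p$, as in the proof of Corollary~\ref{cor:equicont1}) makes $\{e^{\dot{\f}_t}\}$ uniformly integrable, so Dunford--Pettis improves this to $e^{\dot{\f}_t}\rightharpoonup e^{\p_+-\p_-}$ weakly in $L^1(\omega_X^n)$.

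Next I would deduce that $\dot{\f}_t\rightharpoonup\p_+-\p_-$ weakly in $L^1$ and then upgrade both weak limits to convergence in measure. One half is soft: any subsequential weak $L^1$-limit $g$ of $\dot{\f}_t$ satisfies $g\le\p_+-\p_-$ a.e., because convexity of $\exp$ and weak lower semicontinuity of convex integral functionals give $e^{g}\le e^{\p_+-\p_-}$ a.e.\ (the latter being the weak $L^1$-limit of $e^{\dot{\f}_{t_k}}$). The missing ingredient is the first-moment identity $\int_X\dot{\f}_t\,\omega_X^n\to\int_X(\p_+-\p_-)\,\omega_X^n$ as $t\to 0$: granting it, $\int_X g\,\omega_X^n=\int_X(\p_+-\p_-)\,\omega_X^n$ together with $g\le\p_+-\p_-$ forces $g=\p_+-\p_-$, so $\dot{\f}_t\rightharpoonup\p_+-\p_-$ weakly in $L^1$ and, by the $L^q$-bounds, also weakly in each $L^q(\omega_X^n)$.

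With both $\dot{\f}_t\rightharpoonup\p_+-\p_-$ and $e^{\dot{\f}_t}\rightharpoonup e^{\p_+-\p_-}$ in hand, I would close the argument by a convexity estimate. The nonnegative quantity $\int_X\bigl(e^{\dot{\f}_t}-e^{\p_+-\p_-}-e^{\p_+-\p_-}(\dot{\f}_t-(\p_+-\p_-))\bigr)\omega_X^n$ equals $\int_X(\p_+-\p_-)\,T^n-\int_X\dot{\f}_t\,T^n$, which tends to $0$ since $\dot{\f}_t\rightharpoonup\p_+-\p_-$ weakly in $L^{p'}(\omega_X^n)$ while $e^{\p_+-\p_-}\in L^p(\omega_X^n)$. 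As the integrand is bounded below by $c(\delta)\,e^{\p_+-\p_-}$ on $\{|\dot{\f}_t-(\p_+-\p_-)|\ge\delta\}$ and $e^{\p_+-\p_-}>0$ a.e., this yields $\dot{\f}_t\to\p_+-\p_-$ in $\omega_X^n$-measure, hence in $L^1(\omega_X^n)$ by uniform integrability.

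The main obstacle is therefore the first-moment identity $\int_X\dot{\f}_t\,\omega_X^n\to\int_X(\p_+-\p_-)\,\omega_X^n$. Its "$\le$" half is automatic from $g\le\p_+-\p_-$. For the other half, note that by Lemma~\ref{lem:convexflot} applied to the convex weight $w(s)=s-1-\log s$ the function $t\mapsto\int_X\dot{\f}_t\,\omega_X^n$ is non-decreasing, hence $F(t):=\int_X\f_t\,\omega_X^n$ is convex on $[0,+\infty)$ and continuous up to $0$, and $\lim_{t\to 0}\int_X\dot{\f}_t\,\omega_X^n=F'_+(0)$; one must show $F'_+(0)\ge\int_X(\p_+-\p_-)\,\omega_X^n$. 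This cannot be obtained by soft weak-convergence considerations — the weak convergence $\omega_t^n\to T^n$ only produces lower semicontinuity of $-\int_X\dot{\f}_t\,\omega_X^n$, i.e.\ the opposite inequality — and must be extracted from the regularizing structure of the flow, by bounding $\f_t$ from below by sub-solutions of the parabolic complex Monge--Amp\`ere equation of the form $\f+t\chi_j+O(t^2)$, where $\chi_j\uparrow\p_+-\p_-$ a.e.\ with $\int_X\chi_j\,\omega_X^n\to\int_X(\p_+-\p_-)\,\omega_X^n$ is built from smooth decreasing regularizations of $\p_\pm$. This is exactly where one uses that $\p_\pm$ are $A\omega_X$-psh rather than merely that $e^{\p_+-\p_-}\in L^p$ (compare the oscillatory example preceding the statement), and it is delicate because $\p_\pm$ are unbounded and only $A\omega_X$-psh, so the barriers must be renormalized to keep the relevant $(1,1)$-forms positive. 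I expect this comparison step to be the heart of the proof.
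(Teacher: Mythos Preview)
Your overall architecture matches the paper's proof closely: weak convergence of $e^{\dot{\f}_t}$ via Bedford--Taylor continuity, the Jensen-type inequality $g\le\psi:=\p_+-\p_-$ for any weak cluster point $g$ of $\dot{\f}_t$, monotonicity of $t\mapsto\int_X\dot{\f}_t\,\omega_X^n$, and finally the first-moment inequality $\int_X\psi\,\omega_X^n\le\int_X\dot{\f}_t\,\omega_X^n$ to force $g=\psi$. Two of your implementation choices are genuine and correct alternatives to the paper's. First, your derivation of the monotonicity from Lemma~\ref{lem:convexflot} with $w(s)=s-1-\log s$ is slick; note however that the lemma as stated asks $w$ to be convex \emph{increasing}, which this $w$ is not, so you should point out that the proof only uses $w''\ge0$. (The paper instead integrates by parts directly using $e^{-\dot{\f}_t}dd^c\dot{\f}_t\ge-dd^c e^{-\dot{\f}_t}$.) Second, your upgrade from weak to strong $L^1$ via the ``Bregman divergence'' $\int_X e^{\psi}\bigl(e^{\dot{\f}_t-\psi}-1-(\dot{\f}_t-\psi)\bigr)\omega_X^n\to0$ is a nice alternative to the paper's argument, which instead shows $e^{\dot{\f}_t/2}\to e^{\psi/2}$ strongly in $L^2$ (weak convergence plus convergence of norms), extracts a.e.\ convergence, and concludes by dominated convergence.

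The genuine gap is the step you yourself flag: the inequality $\int_X\psi\,\omega_X^n\le\int_X\dot{\f}_t\,\omega_X^n$. Your proposed barrier $\f+t\chi_j+O(t^2)$ runs into a real obstacle: $\f$ is only continuous, so this candidate subsolution is not smooth and one would have to set up a comparison principle for the parabolic complex Monge--Amp\`ere flow with merely pluripotential subsolutions; even granting that, the subsolution condition at $t=0$ forces $\chi_j\le\psi$, while controlling the $O(t^2)$ correction for $t>0$ requires differentiating $(\omega_X+dd^c\f+t\,dd^c\chi_j)^n$, which is problematic when $\omega_X+dd^c\f$ is only a current. The paper sidesteps this entirely by regularizing \emph{the initial potential} as well: take smooth $\psi_j^{\pm}\searrow\psi_{\pm}$, solve the elliptic equation $(\omega_X+dd^c\f_{0,j})^n=e^{\psi_j+c_j}\omega_X^n$ by Yau (so $\f_{0,j}\to\f$ uniformly by Ko{\l}odziej stability), and run the \emph{smooth} flow $\f_{t,j}$ from $\f_{0,j}$. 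Monotonicity then gives $\int_X(\psi_j+c_j)\,\omega_X^n=\int_X\dot{\f}_{0,j}\,\omega_X^n\le\int_X\dot{\f}_{t,j}\,\omega_X^n$ for each fixed $t>0$, and one passes to the limit $j\to\infty$ using that $\dot{\f}_{t,j}\to\dot{\f}_t$ smoothly for $t>0$ by the stability results of \cite{GLZ20}. This is the missing ingredient you should supply in place of the barrier heuristic.
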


\begin{proof}
We first show that $\dot{\f_t}$ weakly converges to 
$\p_+-\p_-$ as $t \rightarrow 0$.
Recall from Lemma \ref{lem:bddphidot} that there is $C>0$ such that for all $x \in X$ and $t>0$,
$$
\p_+(x)-C \leq \dot{\f_t}(x) \leq -\p_-(x)+C.
$$
In particular the families of functions $(e^{\dot{\f_t}})_{t>0}$ and
 $(\dot{\f_t})_{t>0}$ are weakly compact in $L^p$ for some $p>1$.
 
It follows from   \cite[Theorem 4.26]{GZbook}
that  $\omega_t^n =e^{\dot{\f_t}} \omega_X^n$ weakly converges towards $(\omega_X+dd^c \f)^n=e^{\p_+-\p_-} \omega_X^n$
as $t \rightarrow 0$, since $\f_t$ uniformly converges towards $\f$ by Theorem \ref{thm:GZ17}.
Thus $e^{\dot{\f_t}}$ weakly converges in $(L^p)^*$ to $e^{\p}$
as $t \rightarrow 0$, setting $\p=\p_+-\p_-$.
 
Let $u$ denote a weak-$(L^p)^*$ limit of some subsequence $(\dot{\f_{t_j}})$, i.e. $\dot{\f_{t_j}} \rightarrow u$,
and let $q$ denote the conjugate exponent of $p$.
We  fix $0 \leq \chi \in L^q$ normalized so that $\int_X \chi \omega_X^n=1$.
Jensen's inequality yields
$$
\int  e^{\p} \chi \omega_X^n =
\lim_j \int  e^{\dot{\f_{t_j}}} \chi \omega_X^n \geq \lim_j \exp  \left( \int_X  \dot{\f_{t_j}} \chi \omega_X^n \right)
=\exp  \left( \int_X  u \chi \omega_X^n \right).
$$
We apply this inequality to $\chi=\chi_{x,r}$ the (normalized) characteristic function of the ball $B_{\omega_X}(x,r)$.
By Lebesgue theorem $\int_X  u \chi_{x,r} \omega_X^n \stackrel{r \rightarrow 0}{\longrightarrow} u(x)$ for almost every $x$.

Letting $r$ decrease to zero we therefore obtain, for almost every $x \in X$, $e^{\p(x)} \geq e^{u(x)}$ hence $\p(x) \geq u(x)$.\\

It now suffices to show that
$\int_X \psi \omega_X^n \leq \int_X u \omega_X^n$
to conclude that there is equality $ u=\psi$.
We first observe that 
\begin{equation}
\label{non dc}
t \mapsto \int_X \dot{\f_t} \omega_X^n\quad \mbox{ is non-decreasing.}
\end{equation} Indeed
$$
e^{-\dot{\f_t}} dd^c \dot{\f_t}
=-dd^c e^{-\dot{\f_t}} + e^{-\dot{\f_t}} d\dot{\f_t} \wedge d^c \dot{\f_t} 
\geq -dd^c e^{-\dot{\f_t}},
$$
hence using $\omega_X^n=e^{-\dot{\f_t}} \omega_t^n$ and $\ddot{\f_t}=\Delta_t \dot{\f_t}$ we obtain
$$
\int_X  \ddot{\f_t} \omega_X^n =
\int_X e^{-\dot{\f_t}} dd^c \dot{\f_t} \wedge \omega_t^{n-1} 
\geq  \int_X -dd^c \left( e^{-\dot{\f_t}} \right) \wedge \omega_t^{n-1}=0,
$$
hence \eqref{non dc} holds. 
We show hereafter that 
\begin{equation}
\label{non dc 2}
\forall t>0, \quad  \int_X\psi \omega_X^n\le \int_X \dot{\f_t} \omega_X^n .
\end{equation}
The conclusion will then follow since $\int_X \dot{\f_{t_j}} \omega_X^n  \rightarrow \int_X u \omega_X^n$.\\

 We now use an approximation argument to show \eqref{non dc 2}.  Let $\p_j^{\pm}$ be smooth qpsh functions decreasing to $\p_{\pm}$, and set $\psi_j:=\psi_j^+-\psi_j^-$. 
 By \cite{Yau78} there exists a unique smooth $\omega$-psh function $\f_{0,j}$ such that
 $\sup_X \f_{0,j}=\sup_X \f$ and
 $$
 (\omega+dd^c \f_{0,j})^n=e^{\psi_j+c_j} \omega_X^n,
 $$
 where $c_j$ is a mass normalizing constant ($c_j \rightarrow 0$ as $j \rightarrow +\infty$).
 Since the densities $f_j=e^{\p_j +c_j}$ converge to $f=e^{\p}$ in $L^p$, 
 it follows from Kolodziej's stability estimate \cite{Kolstab} that 
 $\f_{0,j}$ uniformly converge to $\f$ as $j \rightarrow +\infty$.
 We now consider the smooth Monge-Amp\`ere parabolic potentials $\f_{t,j}$ such that
 $\f_{t,j} \rightarrow \f_{0,j}$ as $t \rightarrow 0$ and
 $$
 (\omega_X+dd^c \f_{t,j})^n =e^{\dot{\f}_{t,j}} \omega_X^n
 $$
 for $t>0$. Given \eqref{non dc}, we have
 $$\forall t>0, \quad 
   \int_X (\p_j+c_j) \omega_X^n
 =  \int_X \dot{\f}_{0,j} \omega_X^n \leq \int_X \dot{\f}_{t,j} \omega_X^n.
 $$
 By construction, $\int_X (\p_j +c_j) \omega_X^n \rightarrow \int_X \p \omega_X^n$
 as $j \rightarrow +\infty$,
 while it follows from \cite{GLZ20} that $\dot{\f}_{t,j}$ smoothly converge
 to $\dot{\f}_{t}$ for $t>0$. Thus $\int_X \dot{\f}_{t,j} \omega_X^n \rightarrow \int_X \dot{\f}_{t} \omega_X^n$
 and \eqref{non dc 2} follows, hence $u=\psi$ as desired.\\

 It remains to show that the convergence 
 $\dot{\f_t} \stackrel{t \rightarrow 0}{\longrightarrow} \p$ actually holds in $L^1$.
 The functions $e^{\dot{\f_t}/2}$ are weakly compact in $L^2$ as $t \rightarrow 0$.
 We let $f$ denote a cluster point. Reasoning as above, it follows from Jensen's inequality
 (applied to  $x \mapsto e^{x/2})$ that $f \geq e^{u/2}=e^{\p/2}$.
 On the other hand the same reasoning (Jensen's inequality applied to $x \mapsto x^2$) yields $e^{\p} \geq f^2$.
 Thus $f=e^{\p/2}$, i.e. $e^{\dot{\f_t}/2}$ weakly converges to $e^{\p/2}$.
 Since $\|e^{\dot{\f_t}/2}\|_{L^2} \rightarrow \|e^{\p/2}\|_{L^2}$ (as
 $e^{\dot{\f_t}}$ weakly converges to $e^{\p}$), we infer $$
 \|e^{\dot{\f_t}/2}-e^{\p/2}\|^2_{L^2}
 = \|e^{\dot{\f_t}/2}\|_{L^2}^2+\|e^{\p/2}\|_{L^2}^2-2 \langle e^{\dot{\f_t}/2}, e^{\p/2} \rangle \rightarrow 0.
 $$
 Extracting we can assume that $\dot{\f_t}(x)$ converges  to $\p(x)$ almost everywhere;
 the conclusion follows from Lebesgue dominated convergence theorem.
 \end{proof}

\begin{rem}
As the final step of the proof shows, the convergence of $\dot{\f}_t$ towards $\p_+-\p_-$ 
holds in $L^q$ for all $q>1$ (and even in the Orlicz space $L^{\rm exp}$).
\end{rem}

\subsection{The one dimensional case} \label{sec:alexandrov}

In this section we apply our analysis to the case of compact Riemann surfaces
and complete the proof of  Theorem \ref{thmD}.

\subsubsection{Alexandrov surfaces}\label{sec-alex-surf}

%\subsubsection{Surfaces with bounded integral curvature}

  We briefly review here the theory of compact surfaces with bounded integral curvature,
  as developped by the Leningrad school, following the survey article of
  Troyanov \cite{Troy09} and the book \cite{Re23} (see also \cite{CL25}).
  
  \begin{defi}
  Let $S$ be an oriented compact topological surface endowed with a geodesic distance $d$.
  We say that $(S,d)$ has bounded integral curvature if $d$ is the uniform limit
  of Riemannian geodesic distances $d_{g_i}$ such that for some uniform  $C>0$,
  $$
  \int_{S} |\kappa(g_i)| d A_i \leq C,
  $$
  where  $d A_i$ is the area measure of $(S,g_i)$,
  and $\kappa(g_i)$ denotes the Gauss curvature of $g_i$.
  \end{defi}
  
 The latter condition implies (by Banach-Alaoglu theorem) that the curvature
 measures $\kappa(g_i) dA_i$ weakly converge towards a unique
 (Radon) measure $\mu_d=\mu$ on $(S,d)$ which satisfies the Gauss-Bonnet theorem
 $$
 \mu(S)=\int_S d\mu =\lim_{i \rightarrow +\infty} \int_S \kappa(g_i) dA_i=\chi(S),
 $$
 where $\chi(S)$ denotes the Euler characteristic of $S$. 
 
 Conversely, one can show that
 any Radon measure $\mu$ on $S$ such that $\mu(S)=\chi(S)$ is
 the curvature metric of an Alexandrov metric $d$ on $S$.
  As $S$ is oriented, it admits a structure of compact Riemann surface.
    We equip  $S$ with a constant curvature metric $\omega_S$,
    i.e. such that $\Ric(\omega_S )= \kappa \omega_S$, where $\kappa \in \R$ is a constant.
    It satisfies $\int_S \kappa \,\omega_S=\chi(S)$. 
    The  Ricci curvature of   a positive Radon measure of the form $T_u=e^{u} \omega_S$  is defined to be
    $$
    \Ric(T_u)=\Ric(\omega_S)-dd^c \log (T_u/\omega_S)=\kappa \omega_S-dd^c u.
    $$
    Given  a Radon measure $\mu$
   such that $\mu(S)=\chi(S)$, it can be written
    $$
    \mu=\kappa \omega_S-dd^c u=\Ric(T_u),
    $$
    for a unique integrable function $u:S \rightarrow \overline{\R}$ such that
    $\int_S u \,\omega_S=0$. 
    
    Note that the function $u$ is a difference of 
    qsh functions.
      We let $\p_{\pm}$ denote two qsh functions on $S$
  (that is, $dd^c \p_{\pm} \geq -A \omega_S$ for some $A>0$) 
  such that $u=\p_+-\p_-$ and $\p_\pm$ have no common Lelong numbers.   
   \begin{defi}\label{defn-cusp}
 We say that a point $x$ is a cusp  if $\mu(x)=\Ric(T_u)(x) \geq 2$. 
 \end{defi}
 
We only consider the case when there is no cusp.
  This corresponds to the fact that 
  %$u=\p_--\p_+$ has small Lelong numbers at all points (i.e. $\nu(u,x) <2$ for all $x \in S$), or, equivalently, to the fact that 
  $e^{-\psi_-}\in L^p$ for some $p>1$.
 Consider
  $$
  d_{u}(x,y):=\inf \left\{ 
  \int_0^1 e^{\frac{1}{2} u \circ \gamma(t)} |\dot{\gamma}(t)|_{\omega_S} \,dt, \; \gamma(0)=x \text{ and } \gamma(1)=y 
  \right\}.
  $$
 %  where 
   %$u=\p_+-\p_-$ and 
 %  the infimum runs over all paths $\gamma:[0,1] \rightarrow S$   which are piecewise ${\mathcal C}^1$-smooth.
Proposition~\ref{prop:ExpInteg} ensures that 
   each integral $\int_0^1 e^{\frac{1}{2} u \circ \gamma(t)} |\dot{\gamma}(t)|_{\omega_S} \,dt$ is finite.
   The following is a recap on the main results obtained by Aleksandrov, Zalgaller and Reshetnyak 
(see \cite[Proposition 5.3, Theorem 6.1, Theorem 6.2, Corollary 6.3, Theorem 6.4]{Troy09}):

\begin{thm} \label{thm:recapdim1}
\text{ }

1) If there is no cusp then $d_u$ is a distance.

\smallskip

2) If the distances $d_j$  uniformly converge to $d$, then $\mu_{d_j}$ weakly converge to $\mu_d$.

\smallskip

3) If there is no cusp and $\mu_{d_j}$ weakly converge to $\mu_d$, then 
$d_{u_j}$ uniformly converges to $d_u$.

\smallskip

4) The distance determines the conformal structure.

\smallskip

5) If $(S,d)$ is an oriented compact Alexandrov surface with bounded integral curvature and without cusp, 
then there exists $\omega$ and $u$ as above
such that $d=d_u$.
\end{thm}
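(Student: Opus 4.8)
The plan is to reduce each of the five items to the corresponding result of the theory of surfaces with bounded integral curvature (BIC), after fixing a precise dictionary between the analytic objects used here and the metric-geometric ones of the Leningrad school; items (2) and (5) will be the genuinely deep inputs, invoked as black boxes.

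First I would set up the dictionary. Writing a positive Radon measure $\mu$ on $S$ with $\mu(S)=\chi(S)$ as $\mu=\kappa\omega_S-dd^c u$ for the unique $u\in L^1(S)$ with $\int_S u\,\omega_S=0$, one checks that such a $u$ is automatically a difference $\psi_+-\psi_-$ of qsh functions without common Lelong numbers, that the atomic part of $\mu$ at a point $x$ equals $\nu(\psi_-,x)-\nu(\psi_+,x)$, and hence that the no-cusp condition $\mu(\{x\})<2$ for all $x$ is equivalent to $\nu_-<2$, i.e. to $e^{-\psi_-}\in L^p(\omega_S)$ for some $p>1$ (which in turn forces $\psi_+$ to be bounded above). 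Under this dictionary, $d_u$ is exactly the distance denoted $d_{T}$ in Section~\ref{sec:dim1} with $T=T_u=e^{u}\omega_S$.

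Then I would treat the items in turn. For (1): with $\nu_-<2$ and $\psi_+$ bounded above, Theorem~\ref{dist dim 1} applies verbatim and shows that $d_u=d_{T}$ is a distance, bi-Hölder equivalent to $d_S$; alternatively one cites \cite[Proposition~5.3]{Troy09}. For (5), Reshetnyak's representation theorem: given an oriented compact Alexandrov surface $(S,d)$ with BIC and without cusp, I would equip $S$ with its underlying Riemann surface structure and a constant-curvature metric $\omega_S$; Gauss--Bonnet yields $\mu_d(S)=\chi(S)$, so the dictionary produces an admissible $u$, and \cite[Theorem~6.1]{Troy09} is precisely the assertion $d=d_u$. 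Item (4) is \cite[Theorem~6.4]{Troy09}: the conformal structure is intrinsically recoverable from $d$ alone and coincides with the one used in (5). For the stability statements: (2) is \cite[Theorem~6.2]{Troy09}, stating that the curvature measure is weak-$\ast$ continuous under uniform convergence of the distances; (3) is \cite[Corollary~6.3]{Troy09}, where the no-cusp hypothesis supplies a uniform bound $\nu_{-,j}\le\nu_-<2$ along the approximating sequence, so that Proposition~\ref{prop:ExpInteg} --- whose constant is uniform over compact families of subharmonic functions with maximal Lelong number $<1$ --- gives equicontinuity of the distances $d_{u_j}$ through Theorem~\ref{dist dim 1}, while the weak convergence $\mu_{u_j}\to\mu_u$ pins the limit down to $d_u$.

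The main obstacle is that (2) and (5) are not elementary: they are the deep approximation and representation theorems of Reshetnyak, which we use without reproving. The content added here is the translation that places currents of the form $e^{\psi_+-\psi_-}\omega_S$ within the scope of that theory, for which Theorem~\ref{dist dim 1} and Proposition~\ref{prop:ExpInteg} provide the quantitative comparison with $d_S$.
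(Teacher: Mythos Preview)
The paper does not prove this theorem: it is stated as a recap of classical results of Aleksandrov, Zalgaller and Reshetnyak, with precise pointers to \cite[Proposition~5.3, Theorem~6.1, Theorem~6.2, Corollary~6.3, Theorem~6.4]{Troy09} given in the sentence preceding the statement. Your proposal is therefore aligned with the paper's approach, since you invoke exactly the same references for each item and correctly flag (2) and (5) as the deep inputs.

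Where you go beyond the paper is in the dictionary and in the sketch for (3). The dictionary is fine and useful. The sketch for (3), however, overreaches: the claim that the no-cusp hypothesis on the limit yields $\nu_{-,j}\le\nu_-$ uniformly along the sequence is not justified as written. Weak convergence of the curvature measures together with compactness of $S$ does give that for any $\epsilon>0$ one eventually has $\sup_x\mu_{d_j}(\{x\})<2$ (an upper-semicontinuity argument for atoms), but not the sharp bound by $\nu_-$; and translating this back into a uniform bound on the Lelong numbers of the $\psi_{-,j}$ requires a bit more care since the decomposition $u_j=\psi_{+,j}-\psi_{-,j}$ is not canonical. Since you are ultimately just citing \cite[Corollary~6.3]{Troy09}, this heuristic is harmless, but you should either drop the quantitative claim or weaken it to the qualitative ``eventually $\nu_{-,j}<2$'' that the semicontinuity argument actually delivers.
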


   \begin{defi}
  Let $S$ be an oriented compact topological surface endowed with a distance $d$.
  We say that $(S,d)$ has   curvature bounded from below if $d$ is the uniform limit
  of Riemannian distances $d_{g_i}$ such that
  $
  \kappa(g_i) \geq -C,
  $
  where $C>0$ is a uniform constant,  
  and $\kappa(g_i)$ denotes the Gauss curvature of $(S,g_i)$.
  \end{defi}
  
In this case  the curvature
 measures $\kappa(g_i) dA_i$ weakly converge towards a unique
  curvature metric $\mu_d=\mu$ of total mass $\chi(S)$, and such that
  $$
    \mu=\kappa \omega_S-dd^c u \geq -C \omega_S.
    $$
    This corresponds to the fact that $u=\p_+-\p_-$ with $\Delta_{\omega_S} \p_+ \in L^{\infty}$.
    Thus $-u$ is qsh and we can equivalently assume that $\p_+=0$.

\subsubsection{Canonical smoothing by the Ricci flow}

 The following result has been a source of inspiration for us:
 
 \begin{thm} \label{thm:richard}
 \cite{Sim12, Rich18}
 Let $S$ be a compact Alexandrov surface whose curvature is uniformly bounded below.
 Then $S$ is the initial datum of a smooth Ricci flow. Moreover the latter is unique
 up to diffeomorphism.
 \end{thm}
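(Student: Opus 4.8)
The plan is to recover Theorem~\ref{thm:richard} from the pluripotential-theoretic construction of this paper, treating it as the curvature-bounded-below case of Theorem~\ref{thmD}; recall that in complex dimension one the twisted Kähler--Ricci flow \eqref{eqn-krf-twist-shout} is, after an elementary rescaling of space and time (legitimate since $c_1(S)$ is a multiple of $[\omega_S]$), a genuine Ricci flow. \emph{Step 1 (conformal normal form).} Since $S$ is oriented and has curvature bounded below, it has bounded integral curvature and no cusp (a cusp sits at infinite distance, incompatible with compactness). By the theory recalled in \S\ref{sec-alex-surf} (Theorem~\ref{thm:recapdim1}(4)--(5)), $S$ carries a canonical conformal structure; fixing a constant-curvature representative $\omega_S$ one gets $d=d_u$ with $u=\psi_+-\psi_-$ and $\Delta_{\omega_S}\psi_+\in L^\infty$, and, as explained after Definition~\ref{defn-cusp}, one may normalize so that $\psi_+=0$, $u=-\psi_-$ with $\psi_-$ quasi-subharmonic. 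Absence of cusps means $\nu_-<2$, equivalently $e^{-\psi_-}\in L^p(\omega_S)$ for some $p>1$, so the datum is the positive current $T=e^{-\psi_-}\omega_S$, cohomologous to $\omega_S$, with Hölder-continuous potential (Theorem~\ref{thm:kolo}), and $d=d_T$ by Theorem~\ref{dist dim 1}.

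\emph{Step 2 (the flow and its estimates).} I would run \eqref{eqn-krf-twist-shout} out of $T$: Theorem~\ref{thm:GZ17} produces a unique family $(\omega_t)_{t>0}$ of smooth Kähler forms, i.e. a smooth Ricci flow $g(t)$ on $S\times(0,\infty)$, with $\omega_t\to T$ weakly and $\varphi_t\to\varphi$ uniformly. Specializing \S\ref{sec:fkr} to $\psi_+=0$ gives $C^{-1}\omega_S\le\omega_t\le Ce^{-\psi_-}\omega_S$ (Theorem~\ref{thm:laplace}), $-C\le\dot\varphi_t\le C-\psi_-$ (Lemma~\ref{lem:bddphidot}), a uniform diameter bound and $d_t\le Cd_S^\alpha$ (Corollary~\ref{cor:equicont1}), so the $(S,d_t)$ are precompact in the Gromov--Hausdorff topology (Proposition~\ref{pro:equicont}); under the mild assumption $|\partial\psi_-|_{\omega_S}\le e^{-\alpha\psi_-}$ one also has the quantitative bounds of Propositions~\ref{pro:alix1}--\ref{pro:alix2}. \emph{Step 3 (convergence at $t=0$).} In complex dimension one $\omega_t=\omega_t^n=e^{\dot\varphi_t}\omega_S$, so $(S,d_t)$ is the surface $d_{\dot\varphi_t}$ of \S\ref{sec-alex-surf}, with curvature measure $\mu_{d_t}=\Ric(\omega_t)=\kappa\omega_S-dd^c\dot\varphi_t$ of total mass $\chi(S)$, and with conformal class equal to that of $S$. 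By Theorem~\ref{thm:cvricci}, $\dot\varphi_t\to-\psi_-$ in $L^1(\omega_S)$, hence $\mu_{d_t}\to\kappa\omega_S+dd^c\psi_-=\mu_d$ weakly as $t\to0$; since there is no cusp, the Reshetnyak-type continuity statement Theorem~\ref{thm:recapdim1}(3) yields $d_t=d_{\dot\varphi_t}\to d_{-\psi_-}=d_u=d$ uniformly. Thus $(S,d_t)\to(S,d)$ in the Gromov--Hausdorff sense, i.e. $(S,d)$ is the initial datum of a smooth Ricci flow (this is Theorem~\ref{thmD}, cf. Theorem~\ref{thm:fkr1dim}).

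\emph{Step 4 (uniqueness up to diffeomorphism).} Given another smooth Ricci flow $(S',h(t))$ with $(S',h(t))\to(S,d)$ in the Gromov--Hausdorff sense, I would exploit that in real dimension two Ricci flow preserves the conformal class: $(S',h(t))$ then carries a fixed complex structure $J$ for $t>0$, which by Theorem~\ref{thm:recapdim1}(4) must coincide with the one determined by $d$. Pulling back by a fixed biholomorphism $(S',J)\simeq(S,\omega_S)$ turns the flow into $h(t)=e^{2v_t}\omega_S$ with $v_t$ solving the scalar equation underlying \eqref{eqn-krf-twist-shout} and $v_t\to\varphi$; the two-sided estimates of \S\ref{sec:fkr} place $v_t$ in the uniqueness class of Theorem~\ref{thm:GZ17}, so $h(t)$ agrees with $g(t)$ after the time-independent diffeomorphism used to identify $S'$ with $S$.

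The main obstacle is Step 3: promoting the weak convergence $\omega_t\to T$ to Gromov--Hausdorff convergence of $(S,d_t)$ towards $(S,d)$. The example in \S\ref{sec:GH} shows this can fail without further input, so the argument genuinely needs (i) equicontinuity of the $d_t$ down to $t=0$, resting on the propagation of the $L^p$-bound on $\omega_t^n$ along the flow (Lemma~\ref{lem:convexflot}), and (ii) the $L^1$-convergence $\dot\varphi_t\to\psi_+-\psi_-$ of Theorem~\ref{thm:cvricci}, which crucially uses that $\Ric(T)$ is a difference of \emph{positive} currents and which then triggers the continuity of the curvature-measure-to-distance correspondence for Alexandrov surfaces. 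Step 4 is comparatively soft, relying on the conformal rigidity of two-dimensional Ricci flow together with the uniqueness half of Theorem~\ref{thm:GZ17}.
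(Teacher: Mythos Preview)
The paper does not prove Theorem~\ref{thm:richard}: it is stated with attribution to \cite{Sim12,Rich18} as a known result and motivation, and the paper's own contribution (Theorem~\ref{thm:fkr1dim}) is the \emph{existence} extension to the bounded-integral-curvature case. Your Steps~1--3 are precisely the specialization of that argument to $\psi_+=0$, and they are correct: this recovers the existence half of Theorem~\ref{thm:richard} via the paper's machinery.

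The gap is Step~4. You assert that after identifying $(S',J)$ with $(S,\omega_S)$ one obtains $h(t)=e^{2v_t}\omega_S$ with ``$v_t\to\varphi$'', and then invoke the uniqueness in Theorem~\ref{thm:GZ17}. But you have not justified $v_t\to\varphi$. What you are given is Gromov--Hausdorff convergence $(S',d_{h(t)})\to(S,d)$; even granting that the conformal structures match (which already requires care: Theorem~\ref{thm:recapdim1}(4) says $d$ determines the conformal class, but you need the \emph{limiting} distance of $d_{h(t)}$ to coincide with $d$ on the fixed underlying manifold, not merely to be GH-close), this yields at best weak convergence of the curvature measures via Theorem~\ref{thm:recapdim1}(2), hence convergence of $dd^c\dot v_t$, not of the potentials $v_t$ themselves in the $C^0$ sense required to place the second flow in the uniqueness class of Theorem~\ref{thm:GZ17}. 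There is also a normalization issue (the constant $b(t)$ absorbed in \S\ref{cx-MA-flow}) that you have not addressed. The paper itself flags this: right after Theorem~\ref{thmB} it notes that ``it would be desirable to better understand uniqueness properties of these flows (see~[TY24] for some uniqueness results when $n=1$)''. In other words, the uniqueness half of Theorem~\ref{thm:richard} is genuinely the result of \cite{Rich18} and is not recovered by the pluripotential approach of this paper; your Step~4 does not close this gap.
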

 
 We now extend this result to the general case of Alexandrov surface with bounded 
 integral curvature, relaxing the uniform bound from below on the Gauss curvature.

\begin{thm} \label{thm:fkr1dim}
Let $(S,d)$ be an oriented compact topological geodesic surface with  bounded integral curvature 
with no cusp.
Then $(S,d)$ is the initial datum of a smooth Ricci flow. 
\end{thm}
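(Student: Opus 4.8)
The plan is to realize the flow as a (suitably reparametrized) complex Monge--Amp\`ere flow and to identify its initial datum by means of the two-dimensional theory recalled in Theorem~\ref{thm:recapdim1}. Since $(S,d)$ is an oriented compact surface with bounded integral curvature and no cusp, Theorem~\ref{thm:recapdim1}(5) lets me endow $S$ with a compatible conformal structure and a constant curvature form $\om_S$ (so that $\Ric(\om_S)=\kappa\,\om_S$ for some constant $\kappa\in\R$), together with an integrable function $u=\p_+-\p_-$, where $\p_\pm$ are qsh functions on $S$ with no common Lelong numbers, such that $d=d_u$ and $\mu_d=\Ric(T_u)$ for $T_u=e^u\om_S$. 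The no-cusp hypothesis means precisely $\nu_-<2$, hence $e^{-\p_-}\in L^p$ for some $p>1$; as $\p_+$ is bounded above, the density $e^{\p_+-\p_-}$ lies in $L^p$ as well. Rescaling $\om_S$ by a positive constant (which only rescales $d$, and hence is harmless after a time rescaling of the flow) I may assume that $T:=T_u$ is cohomologous to a constant curvature form $\om_X$; by Theorem~\ref{thm:kolo} the potential $\f$ with $T=\om_X+dd^c\f$ is H\"older continuous. Thus $T$ satisfies the hypotheses of Theorems~\ref{thm:GZ17} and \ref{thmB}.

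Next I would run the complex Monge--Amp\`ere (Kähler--Ricci) flow with initial datum $T$ provided by Theorem~\ref{thm:GZ17}: this produces K\"ahler forms $\om_t=\om_X+dd^c\f_t$, smooth for $t>0$, with $\f_t\to\f$ uniformly and $\om_t^n=e^{\dot{\f_t}}\om_X^n$. In complex dimension one a K\"ahler form is simply a conformal Riemannian metric, and up to the universal constant relating $dd^c$ to the Ricci form and a time reparametrization, $(\om_t)_{t>0}$ is a smooth Ricci flow on $S$; it therefore remains to show that its Riemannian distances $d_t:=d_{\om_t}$ converge uniformly to $d$ as $t\to0$. The decisive point, which is the one-dimensional miracle, is that \emph{no $C^0$-convergence of the metrics is needed}: by Theorem~\ref{thm:cvricci}, $\dot{\f_t}\to\p_+-\p_-=u$ in $L^1(\om_X^n)$, so that the curvature measures $\mu_{d_t}=\Ric(\om_t)=\Ric(\om_X)-dd^c\dot{\f_t}$ converge weakly to $\Ric(\om_X)-dd^c u=\Ric(T_u)=\mu_d$. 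Since $(S,d)=(S,d_u)$ has no cusp, Theorem~\ref{thm:recapdim1}(3), applied along an arbitrary sequence $t_j\to0$, gives $d_{t_j}\to d_u=d$ uniformly; the limit being independent of the sequence, $d_t\to d$ uniformly as $t\to0$, which is exactly the assertion that $(S,d)$ is the initial datum of the Ricci flow $(\om_t)$.

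The main obstacle is to make the invocation of Theorem~\ref{thm:recapdim1}(3) legitimate: Reshetnyak's convergence theorem requires the curvature measures $\mu_{d_t}=\Ric(\om_t)$ to converge to $\mu_d$ \emph{in the weak-$*$ sense of signed measures}, i.e.\ with uniformly bounded total variations $\int_S|\Ric(\om_t)|=\tfrac1{4\pi}\int_S|\kappa(\om_t)|\,dA_{\om_t}$, whereas Theorem~\ref{thm:cvricci} by itself only supplies distributional convergence. Thus the real task is the a priori bound
\[ \sup_{0<t\le 1}\int_S|\Ric(\om_t)|<+\infty \]
on the total curvature along the flow, equivalently, by Gauss--Bonnet, a uniform bound on $\int_S\Ric(\om_t)^+$. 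I expect to derive it from the evolution of the scalar curvature $R_t$ on a surface — $\partial_tR_t=\Delta_tR_t+R_t^2$ for the unnormalized flow, whence $t\mapsto\int_S|R_t|\,dA_t$ is non-increasing by Kato's inequality and the topological constancy of $\int_SR_t\,dA_t$ — combined with the finer a priori estimates of Section~\ref{sec:fkr} (in particular the two-sided bound $\tfrac1C e^{\p_+}\om_X\le\om_t\le Ce^{-\p_-}\om_X$ of Theorem~\ref{thm:laplace}) and the $L^p$-integrability of the density of $T$; it is precisely here that the no-cusp hypothesis is decisive, beyond its role in the flow theory of Section~\ref{cx-MA-flow}. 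Once this uniform bound is in hand, the weak convergence $\Ric(\om_t)\rightharpoonup\mu_d$ holds against continuous test functions (and $\mu_d$ is a finite signed measure, since $u$ is a difference of qsh functions), and Theorem~\ref{thm:recapdim1}(3) applies, completing the argument.
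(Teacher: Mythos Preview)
Your argument follows exactly the route the paper takes: realize $(S,d)$ as $(S,d_T)$ for $T=e^{\psi_+-\psi_-}\omega_S$ via Theorem~\ref{thm:recapdim1}(5), run the (K\"ahler--)Ricci flow from $T$ using Theorem~\ref{thm:GZ17}, and then combine Theorem~\ref{thm:cvricci} with Theorem~\ref{thm:recapdim1}(3) to get uniform convergence $d_t\to d_T$. The paper restricts to the elliptic curve case so that the twisted and untwisted flows coincide, and then simply writes: ``Since there is no cusp, it follows from Theorem~\ref{thm:recapdim1} that the distances $d_t$ uniformly converge to $d_T$ if and only if the Ricci curvatures weakly converge \ldots\ The latter holds thanks to Theorem~\ref{thm:cvricci}.'' That is the entire proof.

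The one place you diverge is in flagging the ``main obstacle'': you worry that Theorem~\ref{thm:cvricci} only yields $\dot\varphi_t\to\psi_+-\psi_-$ in $L^1$, hence distributional convergence of $\Ric(\omega_t)$, whereas Reshetnyak's theorem is stated for weak-$*$ convergence of signed measures. The paper does not raise this point at all; it treats the conclusion of Theorem~\ref{thm:cvricci} as sufficient input for Theorem~\ref{thm:recapdim1}(3). Whether your caution is strictly necessary depends on the exact formulation one takes from \cite{Troy09}; in any case your proposed repair is sound. The monotonicity $t\mapsto\int_S|R_t|\,dA_t$ non-increasing does hold along the $2$-dimensional Ricci flow, and combined with the smooth approximations $\psi_{\pm,j}\searrow\psi_\pm$ already used in the proof of Theorem~\ref{thm:cvricci} (for which $\int_S|\Ric(T_j)|\le (|\kappa|+4A)\int_S\omega_S$ uniformly, since $\psi_{\pm,j}\in\PSH(S,A\omega_S)$), it gives the uniform bound $\sup_{t>0}\int_S|\Ric(\omega_t)|<\infty$ after passing to the limit with the stability result \cite{GLZ20}. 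So your extra step is correct and harmless, just not something the paper felt the need to spell out.
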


\begin{proof}
By assumption $S$ is a compact Riemann surface endowed with a metric $d_{T}$ associated
to a current $T=e^{\p_+-\p_-} \omega_S$, where 
\begin{itemize}
\item[$\bullet$] $\omega_S$ is a constant scalar curvature K\"ahler metric, ${\rm Ric}(\omega_S)=\kappa \omega_S$;
\item[$\bullet$] $\p_{\pm}$ are $A\omega_S$-psh functions for some $A>0$ large enough;
\item[$\bullet$]  $e^{-\p_-} \in L^p(\omega_S)$ for some $p>1$.
 \end{itemize}

To simplify the notations  we only treat the case of an elliptic curve. 
The K\"ahler-Ricci flow initiating from $T$ is then equivalent
to the complex Monge-Amp\`ere flow 
$$
\omega_t=\omega_S+dd^c \f_t=e^{\dot{\f_t}} \omega_S,
\; \; \text{ with } \; \; 
\f_t \rightarrow \f
\; \; \text{  at time zero}.
$$

Since there is no cusp, it follows from Theorem \ref{thm:recapdim1} that the distances $d_{t}$
uniformly converge to $d_{T}$ if and only if the Ricci curvatures weakly converge
$$
{\rm Ric}(\omega_t)=\kappa \omega_S-dd^c (\dot{\f_t}) 
\longrightarrow {\rm Ric}(T)=\kappa \omega_S-dd^c(\p_+-\p_-).
$$
The latter holds thanks to  Theorem \ref{thm:cvricci}.
\end{proof}

  \subsection{Geometric singularities}

The analysis is    more involved in higher dimension, so we 
further assume in this section that 
$\p_{\pm}$ are smooth in some Zariski open set, so 
that we can apply the results from Section~\ref{higher dim}.

It follows from Proposition \ref{prop dT} that $T$ induces a semi-distance $d_T$ on $X$. Moreover, if $\psi_+$ has isolated singularities, $d_T$ is actually a distance and it coincides with the metric completion of the smooth Riemannian metric defined by $T$ in $\Omega$.
%Moreover $d_T$ is a distance which is bi-H\"older equivalent to $d_X$ if $\p_+$ has isolated analytic singularities. 
The following result (in combination with Theorem \ref{thm:cvricci}) completes the proof of Theorem \ref{thmB} in the introduction.

\begin{thm} \label{thm:cvGHisolated}
Notation as in \textsection~\ref{sec GH KRF}. If $T$ has  geometric singularities and $\p_{\pm}$ have isolated singularities,
then 
%$d_{T}$ is a distance and
the distances $d_{t}$ uniformly converge, as $t \rightarrow 0$, towards $d_{T}$.
In particular the compact metrics spaces $(X,d_{t})$ converge in the Gromov-Hausdorff sense
to  $(X,d_{T})$.
\end{thm}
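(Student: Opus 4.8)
The plan is to establish uniform convergence of $d_t$ to $d_T$ on $X\times X$ by combining the a priori estimates of Section~\ref{sec:fkr}, the structure of $d_T$ from Proposition~\ref{prop dT}, and the convergence of Ricci curvatures from Theorem~\ref{thm:cvricci}. First I would record the relative compactness: by Corollary~\ref{cor:equicont1} we have $d_t\le C d_X^\alpha$ for $t\in(0,1]$, so by Proposition~\ref{pro:equicont} the family $(d_t)$ is equicontinuous and relatively compact in $C^0(X\times X)$; hence it suffices to show that every uniform cluster value $d_0=\lim_i d_{t_i}$ equals $d_T$. The semi-distance $d_0$ is automatically bi-Hölder below by $d_X$ and bi-Hölder above by $d_X^\alpha$ (passing $d_t\le Cd_X^\alpha$ to the limit), so $d_0$ is in fact a distance inducing the topology of $X$ once we identify it with $d_T$.

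The core of the argument is the two inequalities $d_0\le d_T$ and $d_0\ge d_T$ on $\Omega\times\Omega$; by density of $\Omega$ and continuity of both sides they then agree on all of $X\times X$. For the bound $d_0\le d_T$: by Proposition~\ref{pro:higherbis}/\ref{pro:alix2} the potentials $\varphi_t$ converge to $\varphi$ in $C^\infty_{\rm loc}(\Omega)$, so $\omega_t|_\Omega\to\omega=T|_\Omega$ as tensors locally on $\Omega$. Given $x,y\in\Omega$ and $\ep>0$, pick a Lipschitz path $\gamma$ in $\Omega$ from $x$ to $y$ with $\ell_\omega(\gamma)\le d_\omega(x,y)+\ep=d_T(x,y)+\ep$; its image is a compact subset $K\Subset\Omega$, on which $\omega_t\le(1+\ep)\omega$ for $t$ small, whence $d_{t_i}(x,y)\le\ell_{\omega_{t_i}}(\gamma)\le(1+\ep)^{1/2}\ell_\omega(\gamma)$, giving $d_0(x,y)\le d_T(x,y)$. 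The reverse bound $d_0\ge d_T$ is the delicate one and uses the two-sided Laplacian estimate of Theorem~\ref{thm:laplace}, namely $C^{-1}e^{\psi_+}\omega_X\le\omega_t\le Ce^{-\psi_-}\omega_X$ uniformly in $t$. I would argue exactly as in the proof of Proposition~\ref{prop dT}$(ii)$: given $x,y\in\Omega$ and a near-minimizing path $\gamma$ for $d_{t_i}$, its image meets the finite set $\Sigma=\{\psi_\pm=-\infty\}$ in small $\omega_X$-balls; inside a ball $B_p(r)$ the lower bound $\omega_t\ge C^{-1}e^{\psi_+}\omega_X$ lets one estimate the portion of $\gamma$ inside that ball from below (when $p\notin\mathrm{Sing}(\psi_+)$ directly, and when $p\in\mathrm{Sing}(\psi_+)$ by working in the annulus $A(p,r)=B_p(r)\setminus B_p(r/2)$ where $e^{\psi_+/2}$ is bounded below), while outside the balls $\omega_t$ is comparable to $\omega_X$ via the two-sided estimate; surgering $\gamma$ near $\Sigma$ using the Hölder control $\hat d_\omega\le Cd_X^\alpha$ from Proposition~\ref{prop dT}$(i)$ produces a competitor path in $\Omega$ with controlled $\omega$-length, yielding $d_\omega(x,y)\le d_{t_i}(x,y)+o(1)$ and hence $d_T(x,y)=d_\omega(x,y)\le d_0(x,y)$.

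There is, however, a subtlety that must be addressed for the lower bound, and I expect it to be the main obstacle: a near-minimizing $d_{t_i}$-geodesic need not avoid $\Sigma$ nor pass through it transversally, and its behavior could in principle degenerate as $i\to\infty$ (the geodesics could spiral into $\Sigma$). The isolatedness of the singularities is exactly what saves us: since $\Sigma$ is a finite set of points, the codimension is $2n\ge 2$, so one can excise small balls around $\Sigma$ at essentially no cost, controlled uniformly in $t$ by $\hat d_\omega\le Cd_X^\alpha$; the path enters and leaves each such ball, and one only needs the endpoint-to-endpoint estimate on $\partial B_p(r)$ plus the cheap surgery inside, both of which are uniform in $t_i$ by Theorem~\ref{thm:laplace}. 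One then lets $r\to0$ after $t_i\to0$. Where $\psi_+$ does not vanish along a divisor one would lose this, but that case is excluded by hypothesis. Combining the two inequalities gives $d_0=d_T$ for every cluster value, so $d_t\to d_T$ uniformly; the Gromov–Hausdorff convergence $(X,d_t)\to(X,d_T)$ is then immediate since all distances live on the same underlying space $X$ and converge uniformly.
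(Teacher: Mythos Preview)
Your overall strategy—equicontinuity, subsequential limits, two inequalities on $\Omega\times\Omega$, extension by density—matches the paper's, and your upper bound $d_0\le d_T$ via local smooth convergence is exactly Lemma~\ref{ineq dist}. The lower bound, however, has two genuine gaps.

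First, outside the balls $K=\bigcup_{p\in Z}\bar B_{\omega_X}(p,r)$ you compare $\omega_t$ to $\omega_X$ via the Laplacian estimate of Theorem~\ref{thm:laplace}. That only makes $\omega_t$ and $T$ both uniformly equivalent to $\omega_X$ on $X\setminus K$, hence $\ell_T(\gamma|_{X\setminus K})\le C(r)\,\ell_{\omega_{t_i}}(\gamma|_{X\setminus K})$ with a constant bounded away from $1$; you would only conclude $d_T\le C\,d_0$, not $d_T\le d_0$. The correct ingredient is the same smooth convergence you already invoked for the upper bound: since $X\setminus K$ is a compact subset of $\Omega$, Proposition~\ref{pro:higherbis} gives $\omega_t\ge(1-\ep)T$ on $X\setminus K$ for all $t$ small enough. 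That is precisely how the paper obtains $\ell_{\omega_t}(\gamma_t|_{X\setminus K})\ge(1-\ep)\,\ell_T(\gamma_t|_{X\setminus K})$.

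Second, you flag but do not settle the ``spiraling'' issue. Each time the minimizing $\omega_t$-geodesic $\gamma_t$ enters and leaves $K$, the surgery (or the drop-and-triangle-inequality argument) incurs an error of size $\mathrm{diam}_{d_T}(B_p(r))\le\ep$; one must therefore bound the number $N$ of such excursions independently of $t$. The appeal to ``codimension $2n\ge2$'' does not do this. The paper's device is a short minimality observation: once $\gamma_t$ leaves $B_{p_0}(r)$ and reaches a \emph{different} ball $B_{p_1}(r)$, it cannot later return to $B_{p_0}(r)$, since any such round trip costs at least twice the $d_t$-distance between the two boundaries—bounded below uniformly in $t$ (here the Laplacian lower bound \emph{is} relevant, on an annulus separating the balls)—whereas short-cutting across $B_{p_0}(r)$ costs at most $\mathrm{diam}_{d_t}(B_{p_0}(r))\le C(2r)^\alpha$, contradicting minimality for $r$ small. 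Grouping all visits to a given ball into a single interval then yields $N\le|Z|$, and the total error is $\le|Z|\,\ep$.
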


\begin{proof}
Let $Z$ denote the (finite) set poles of $\p_{\pm}$.
It follows from Proposition \ref{pro:higherbis} that
$\omega_t$ converges locally smoothly to $T$ in $\Omega$. 
Also, we have seen in Proposition~\ref{prop dT} that $d_{T}$ is a distance and that 
\begin{equation}
\label{sm}
\mathrm{diam} (B_{\omega_X}(p,r), d_{T}) \underset{r\to 0} \longrightarrow 0,
 \quad \mathrm{for \,\, any} \, p\in X.
\end{equation}
%This is true e.g. if $\psi_+=0$, thanks to Corollary~\ref{cor bi holder}.
Thanks to Lemma \ref{ineq dist}, it is enough to show that for any $x,y\in X$, we have 
\begin{equation}
\label{liminf}
\liminf_{t \rightarrow 0} d_t(x,y) \ge  d_{T}(x,y).
\end{equation}
We start by showing \eqref{liminf} whenever $x,y\in X\setminus Z$. We fix two such points $x,y$, as well as an arbitrarily small number $\ep>0$. Given \eqref{sm}, we can choose $r=r(\ep)$ such that 
\begin{equation}
\label{small balls}
\mathrm{diam} (B_{\omega_X}(p,r), d_{T}) \le \ep, \quad \mathrm{for \,\, any} \, p\in Z.
\end{equation}
Now, let $\gamma_t$ be a minimizing geodesic for $\omega_t$ joining $x$ to $y$; we can assume without loss of generality that $\gamma_t$ is parametrized by arclength with respect to $\omega_X$. 

We set $K:=\bigcup_{p\in Z} \bar B_{\omega_X}(p,r)$ and choose $t_{\ep}$ such that 
\begin{equation}
\label{minoration}
\forall \,0\le t \le t_{\ep}, \quad \omega_t \ge (1-\ep) T \quad \mathrm{on} \,\, X\setminus K,
\end{equation}
whose existence is guaranteed by smooth convergence of $\omega_t$ to $T$ in $X\setminus K$.

Let us first make a simple observation. If $\gamma_t$ hits some ball $B_{\omega_X}(p_0,r)$ for $p_0\in Z$ at time $s_0$ and another ball $B_{\omega_X}(p_1,r)$ ($p_1\in Z$, $p_1\neq p_0$) at time $s_1>s_0$, then $\gamma_t(s)$ does not hit $B(p_0,r)$ for any $s\ge s_1$, as long as $r$ is small enough. This is because the boundary of the two balls are at positive distance with respect to $d_t$ uniformly in $t$, so that \eqref{small balls} would allow to find a shorter path. 
%Next, it is possible that $\gamma_t(s)$ comes in and out of a given ball $B_{\omega_X}(p,r)$ (for $p\in Z$) many times (i.e. not in a bounded way as $t\to 0$). Considering the first entry time and the last exit time, it is possible to modify $\gamma_t$

Therefore, we can find a partition $[0,1]=\bigsqcup_{k=0}^{2N} I_{k}$ of intervals with $N\le |Z|$ such that
\[k\, \, \mbox{even}\quad  \Longrightarrow \quad  \forall s\in I_k, \gamma_t(s)\notin K.\] 
We write $\bar I_k=[s_k, s_{k+1}]$ with $0=s_0<s_1<\ldots <s_{2N+1}$, and we set $x_k:=\gamma_t(s_k)$. For any $1\le k \le N $, there exists $p_k\in Z$ such that $x_{2k-1}, x_{2k} \in \bar B_{\omega_X}(p_k,r)$ for some $p_k\in Z$. 
%The number $N$ of intervals may depend on $t$, but it is uniformly bounded above as $t$ varies. E.g. this is because the $\gamma_t$'s are  uniformly Lipschitz and equicontinuous, hence we can assume -up to extracting- that $\gamma_t$ uniformly converges to $\gamma_0$ as $t \rightarrow 0$, where $\gamma_0$ is a Lipschitz path joining $x$ to $p$.

Now, consider the concatenation of paths $\tilde \gamma_t:=\cup_{k=0}^N \gamma_t(I_{2k})$ whose image is disjoint from $K$. We have
\begin{eqnarray*}
d_t(x,y) & \ge & \ell_{\omega_t}(\tilde \gamma_t) 
\ge (1-\ep) \ell_{T}(\tilde \gamma_t)\\
&\ge & (1-\ep) \sum_{k=0}^N d_{T}(x_{2k}, x_{2k+1}) \\
& \ge & (1-\ep) \big(d_{T}(x,y)-\sum_{k=1}^N d_{T}(x_{2k-1}, x_{2k})\big)\\
&\ge & (1-\ep)d_{T}(x,y)-(1-\ep)N\ep.
\end{eqnarray*}
where we have used successively \eqref{minoration}, the triangle inequality and \eqref{small balls}. 
As $\ep$ was arbitrary and $N\le |Z|$, we get \eqref{liminf} for any $x,y\notin Z$. 

Fix  $x \in X \setminus Z$ and $y \in Z$. We check that 
$d_t(x,y) \rightarrow d_{T}(x,y)$ as $t \rightarrow 0$. 
Fix $\e>0$ and some large constant $C$ to be determined later. By \eqref{small balls}  one can pick $z \in X \setminus Z$ such that $ Cd_X(y,z)^{\alpha}+d_{T}(y,z) <\e$.
Thus
%{\small
\begin{eqnarray*}
\left| d_{T}(x,y)-d_t(x,y) \right|
& \leq & \left| d_{T}(x,y)-d_{T}(x,z) \right|+\left| d_{T}(x,z)-d_t(x,z) \right|+\left| d_{t}(x,z)-d_t(x,y) \right| \\
& \leq &  d_{T}(y,z) +\left| d_{T}(x,z)-d_t(x,z) \right|+d_{t}(y,z) \\
& \leq &  d_{T}(y,z) +\left| d_{T}(x,z)-d_t(x,z) \right|+C d_{X}(y,z)^{\alpha}  \\
& \leq & \e+\left| d_{T}(x,z)-d_t(x,z) \right|,
\end{eqnarray*}
%}
where we have used Theorem \ref{thm:diameter}. The convergence follows from the previous step.

When $x,y \in Z$ we approximate $x$ by $x' \in X \setminus Z$ and use the same argument and the previous step to show that
$d_t(x,y) \rightarrow d_{T}(x,y)$. The proof is complete.
\end{proof}

\bibliographystyle{smfalpha}
\bibliography{biblioFKR}

@article {Naber-Open-Pb,
    AUTHOR = {Naber, Aaron},
     TITLE = {Conjectures and open questions on the structure and regularity
              of spaces with lower {R}icci curvature bounds},
   JOURNAL = {SIGMA},
    VOLUME = {16},
      YEAR = {2020},
     PAGES = {Paper No. 104, 8},
      ISSN = {1815-0659},
   MRCLASS = {53C21 (53C23)},
  MRNUMBER = {4164873},
       DOI = {10.3842/SIGMA.2020.104},
       URL = {https://doi.org/10.3842/SIGMA.2020.104},
}

@unpublished{ Troy09,
	author = "M. Troyanov",
	title = {{Les surfaces \`a courbure int\'egrale born\'ee au sens d'Alexandrov}},
	year = "2009",
	Note = {Preprint \href{http://arxiv.org/abs/0906.3407}{arXiv:0906.3407}}
}

@article{FGS20,
	author = "Fu, Xin and Guo, Bin and Song, Jian",
	title = {{Geometric estimates for complex Monge-Ampère equations}},
	JOURNAL = {J. Reine Angew. Math.},
	 FJOURNAL = {Journal f\"{u}r die Reine und Angewandte Mathematik. [Crelle's
              Journal]},
      VOLUME = {765},
      YEAR = {2020},
     PAGES = {69–99}
}

@article{CL25,
	author = "Chen, Jingyi and Li, Yuxiang",
	title = {Uniform convergence of metrics on {A}lexandrov surfaces with bounded integral curvature},
	JOURNAL = {Adv. Math.},
      VOLUME = {479},
      YEAR = {2025},
     PAGES = {110436}
}

@article{TY24,
  Author =	 {Topping, P. and Yin, H.},
  Title =	 {{Uniqueness of Ricci flows from non-atomic Radon measures on Riemann surfaces}},
  JOURNAL = {Proc. Lond. Math. Soc. },
  YEAR = {2024},
    NUMBER = {128},
     PAGES = {30 pp}
}

@incollection {Top21,
    AUTHOR = {Topping, Peter},
     TITLE = {{Loss of initial data under limits of Ricci flows}},
 BOOKTITLE = {Minimal surfaces: integrable systems and visualisation},
    VOLUME = {349},
     PAGES = {257–261},
 PUBLISHER = {Springer},
      YEAR = {2021},
}

@article{GPSS24,
	author = "Guo,B. and Phong,D.H. and Song,J. and Sturm,J.",
	title = { {Diameter estimates in K\"ahler geometry}},
	JOURNAL = {Comm. Pure Appl. Math.},
	VOLUME = {77},
	NUMBER = {8},
	PAGES = {3520–3556}, 
	year = "2024"	
}

@article {GZ17,
    AUTHOR = {Guedj, Vincent and Zeriahi, Ahmed},
     TITLE = {Regularizing properties of the twisted {K}\"{a}hler-{R}icci flow},
   JOURNAL = {J. Reine Angew. Math.},
  FJOURNAL = {Journal f\"{u}r die Reine und Angewandte Mathematik. [Crelle's
              Journal]},
    VOLUME = {729},
      YEAR = {2017},
     PAGES = {275--304},
      ISSN = {0075-4102},
   MRCLASS = {53C44 (53C55)},
  MRNUMBER = {3680377},
MRREVIEWER = {Yuanqi Wang},
       DOI = {10.1515/crelle-2014-0105},
       URL = {https://doi.org/10.1515/crelle-2014-0105},
}

@book {GZbook,
    AUTHOR = {Guedj, Vincent and Zeriahi, Ahmed},
     TITLE = {Degenerate complex {M}onge-{A}mp\`ere equations},
    SERIES = {EMS Tracts in Mathematics},
    VOLUME = {26},
 PUBLISHER = {European Mathematical Society (EMS), Z\"{u}rich},
      YEAR = {2017},
     PAGES = {xxiv+472},
      ISBN = {978-3-03719-167-5},
   MRCLASS = {32W20 (32Q20 32U15 32U20 32U40 35J96)},
MRREVIEWER = {Slimane Benelkourchi},
       DOI = {10.4171/167},
       URL = {https://doi.org/10.4171/167},
}

@book {BBI01,
    AUTHOR = {Burago, Dmitri and Burago, Yuri and Ivanov, Sergei},
     TITLE = {A course in metric geometry},
    SERIES = {Graduate Studies in Mathematics},
    VOLUME = {33},
 PUBLISHER = {American Mathematical Society, Providence},
      YEAR = {2001},
     PAGES = {xiv+415},
      ISBN = {0-8218-2129-6},
}

@book {Re23,
    AUTHOR = {François Fillastre and Dmitriy Slutskiy},
     TITLE = {Reshetnyak's Theory of Subharmonic Metrics},
    SERIES = {CRM Springer},
 PUBLISHER = {Springer},
      YEAR = {2023},
      }

@unpublished{ G+,
	author = "Chen, Yifan and Chiu, Shih-Kai and Hallgren, Max and Székelyhidi, Gábor and Tô, Tat Dat and Tong, Freid",
	title = {{On Kähler-Einstein Currents}},
	year = "2025",
	Note = {Preprint \href{http://arxiv.org/abs/2502.09825}{arXiv:2502.09825}}
}

@article{GGZ25,
	author = "Vincent Guedj and Henri Guenancia and Ahmed Zeriahi",
	title = {{Diameter of K\"ahler currents}},
	JOURNAL = {J. Reine Angew. Math.},
	VOLUME = {80},
	PAGES = {115-152}, 
	year = "2025"	
}

@article {GLZ20,
    AUTHOR = {Guedj, Vincent and Lu, Hoang Chinh and Zeriahi, Ahmed},
     TITLE = {Pluripotential Kähler-Ricci flows},
   JOURNAL = {Geom. Topol.},
  FJOURNAL = {Geometry and Topology},
    VOLUME = {24},
      YEAR = {2020},
     PAGES = {1225-1296},
}

@article {GuanZhou15,
    AUTHOR = {Guan, Qi'an and Zhou, Xiangyu},
     TITLE = {A proof of {D}emailly's strong openness conjecture},
   JOURNAL = {Ann. of Math. (2)},
  FJOURNAL = {Annals of Mathematics. Second Series},
    VOLUME = {182},
      YEAR = {2015},
    NUMBER = {2},
     PAGES = {605--616},
      ISSN = {0003-486X},
   MRCLASS = {32U05},
  MRNUMBER = {3418526},
MRREVIEWER = {\.Zywomir Dinew},
       DOI = {10.4007/annals.2015.182.2.5},
       URL = {http://dx.doi.org/10.4007/annals.2015.182.2.5},
}

@incollection {BoB15,
    AUTHOR = {Berndtsson, Bo},
     TITLE = {The openness conjecture and complex {B}runn-{M}inkowski
              inequalities},
 BOOKTITLE = {Complex geom. and dyn.},
    SERIES = {Abel Symp.},
    VOLUME = {10},
     PAGES = {29--44},
 PUBLISHER = {Springer},
      YEAR = {2015},
   MRCLASS = {32U05 (28A75 31C10 52A40)},
  MRNUMBER = {3587460},
MRREVIEWER = {Steven George Krantz},
}

@article {ChCo97,
    AUTHOR = {Cheeger, Jeff and Colding, Tobias H.},
     TITLE = {On the structure of spaces with {R}icci curvature bounded
              below. {I}},
   JOURNAL = {J. Differential Geom.},
  FJOURNAL = {Journal of Differential Geometry},
    VOLUME = {46},
      YEAR = {1997},
    NUMBER = {3},
     PAGES = {406--480},
      ISSN = {0022-040X},
   MRCLASS = {53C21 (53C20)},
  MRNUMBER = {1484888},
MRREVIEWER = {William P. Minicozzi, II},
       URL = {http://projecteuclid.org/euclid.jdg/1214459974},
}

@Article{ D92,
	AUTHOR = "Jean-Pierre Demailly",
	TITLE = "Regularization of closed positive currents and intersection theory",
	JOURNAL = "J. Algebraic Geom.",
	FJOURNAL = "Journal of Algebraic Geometry",
	VOLUME = "1",
	YEAR = "1992",
	NUMBER = "3",
	PAGES = "361--409",
	ISSN = "1056-3911",
	MRCLASS = "32C30 (32C17 32J25)",
	MRNUMBER = "1158622 (93e:32015)",
	MRREVIEWER = "Takeo Ohsawa"
}

@Article{ Pau08,
	author = "Mihai P{\u{a}}un",
	title = {{Regularity properties of the degenerate Monge-Amp{\`e}re equations on compact K{\"a}hler manifolds.}},
	journal = "Chin. Ann. Math., Ser. B",
	year = 2008,
	volume = "29",
	number = 6,
	pages = "623--630"
}

@article {Rich18,
    AUTHOR = {Richard, Thomas},
     TITLE = {Canonical smoothing of compact {A}leksandrov surfaces via
              {R}icci flow},
   JOURNAL = {Ann. Sci. \'{E}c. Norm. Sup\'{e}r. (4)},
  FJOURNAL = {Annales Scientifiques de l'\'{E}cole Normale Sup\'{e}rieure. Quatri\`eme
              S\'{e}rie},
    VOLUME = {51},
      YEAR = {2018},
    NUMBER = {2},
     PAGES = {263--279},
      ISSN = {0012-9593},
   MRCLASS = {53C44 (53C45)},
  MRNUMBER = {3798303},
MRREVIEWER = {Yong Huang},
       DOI = {10.24033/asens.2356},
       URL = {https://doi.org/10.24033/asens.2356},
}

@article {Sim12,
    AUTHOR = {Simon, Miles},
     TITLE = {Ricci flow of non-collapsed three manifolds whose {R}icci
              curvature is bounded from below},
   JOURNAL = {J. Reine Angew. Math.},
  FJOURNAL = {Journal f\"{u}r die Reine und Angewandte Mathematik. [Crelle's
              Journal]},
    VOLUME = {662},
      YEAR = {2012},
     PAGES = {59--94},
      ISSN = {0075-4102},
   MRCLASS = {53C44},
  MRNUMBER = {2876261},
MRREVIEWER = {Takumi Yokota},
       DOI = {10.1515/CRELLE.2011.088},
       URL = {https://doi.org/10.1515/CRELLE.2011.088},
}

@unpublished{GPTW21,
  Author =	 {Guo, Bin and Phong, Duong Hong and Tong, Freid and Wang, Chuwen},
  Note =	 {Preprint  \href{https://arxiv.org/abs/2112.02354}{arXiv:2112.02354}},
  Title =	 {{ On the modulus of continuity of solutions to complex Monge-Amp\`ere equations}},
  Year =	 2021
}

@article{ST17,
  Author =	 {M. Simon and Topping, P.},
  Title =	 {{ Local mollification of Riemannian metrics using Ricci flow, and Ricci limit spaces}},
  JOURNAL = {Geom. Topol.},
  YEAR = {2021},
    NUMBER = {25},
     PAGES = {913–948},
}

@article{CJN,
 author = {Cheeger, Jeff and Jiang, Wenshuai and Naber, Aaron},
 title = {Rectifiability of singular sets of noncollapsed limit spaces with {Ricci} curvature bounded below},
 fjournal = {Annals of Mathematics. Second Series},
 journal = {Ann. Math. (2)},
 issn = {0003-486X},
 volume = {193},
 number = {2},
 pages = {407--538},
 year = {2021},
 keywords = {53C23,58A35,35A21},
 zbMATH = {7331714},
 Zbl = {1469.53083}
}

@article{CCI,
 author = {Cheeger, Jeff and Colding, Tobias H.},
 title = {On the structure of spaces with {Ricci} curvature bounded below. {I}},
 fjournal = {Journal of Differential Geometry},
 journal = {J. Differ. Geom.},
 issn = {0022-040X},
 volume = {46},
 number = {3},
 pages = {406--480},
 year = {1997},
 doi = {10.4310/jdg/1214459974},
 keywords = {53C23,53C21},
 zbMATH = {1145657},
 Zbl = {0902.53034}
}

@article{CCII,
 author = {Cheeger, Jeff and Colding, Tobias H.},
 title = {On the structure of spaces with {Ricci} curvature bounded below. {II}},
 fjournal = {Journal of Differential Geometry},
 journal = {J. Differ. Geom.},
 issn = {0022-040X},
 volume = {54},
 number = {1},
 pages = {13--35},
 year = {2000},
 doi = {10.4310/jdg/1214342145},
 keywords = {53C23,53C21,53C20,49Q15},
 zbMATH = {1782634},
 Zbl = {1027.53042}
}

@article {ST09,
    AUTHOR = {Song, Jian and Tian, Gang},
     TITLE = {The {K}\"{a}hler-{R}icci flow through singularities},
   JOURNAL = {Invent. Math.},
  FJOURNAL = {Inventiones Mathematicae},
    VOLUME = {207},
      YEAR = {2017},
    NUMBER = {2},
     PAGES = {519--595},
      ISSN = {0020-9910},
   MRCLASS = {32Q15 (14E30 32W20 53C44 53C55)},
  MRNUMBER = {3595934},
MRREVIEWER = {Haozhao Li},
       DOI = {10.1007/s00222-016-0674-4},
       URL = {https://doi.org/10.1007/s00222-016-0674-4},
}

@Article{ Kol98,
	author = "S{\l}awomir Ko{\l}odziej",
	title = "{The complex Monge-Amp{\`e}re operator}",
	journal = "Acta Math.",
	year = "1998",
	volume = "180",
	number = "1",
	pages = "69--117",
	date-added = "2011-05-02 22:48:49 +0200",
	date-modified = "2011-05-03 23:06:00 +0200"
}

@Article{ Kol08,
	title = {{H{\"o}lder continuity of solutions to the complex Monge-Amp{\`e}re equation with the right-hand side in $L^p$: the case of compact K{\"a}hler manifolds}},
	author = "S{\l}awomir Ko{\l}odziej",
	journal = "Math. Ann.",
	pages = "379--386",
	volume = "342",
	number = "1",
	year = "2008",
	date-added = "2011-05-02 22:48:49 +0200",
	date-modified = "2011-05-03 23:06:00 +0200"
}

@article {Kolstab,
    AUTHOR = {S{\l}awomir Ko{\l}odziej},
     TITLE = {The {M}onge-{A}mp\`ere equation on compact {K\"a}hler manifolds},
   JOURNAL = {Indiana Univ. Math. J.},
  FJOURNAL = {Indiana University Mathematical Journal},
    VOLUME = {52},
      YEAR = {2003},
    NUMBER = {3},
     PAGES = {667–686},
}

@Article{ EGZ09,
	author = "Philippe Eyssidieux and Vincent Guedj and Ahmed Zeriahi",
	title = {{Singular K{\"a}hler-Einstein metrics}},
	journal = "{J. Amer. Math. Soc.}",
	year = "2009",
	pages = "607--639",
	volume = "22",
	date-added = "2011-05-02 22:46:49 +0200",
	date-modified = "2011-05-02 22:48:44 +0200"
}

@article {SW13,
    AUTHOR = {Jian Song and Ben Weinkove},
     TITLE = {Contracting exceptional divisors by the {K\"a}hler-{R}icci flow},
   JOURNAL = {Duke Math. J.},
  FJOURNAL = {Duke Mathematical Journal},
    VOLUME = {162},
      YEAR = {2013},
    NUMBER = {2},
     PAGES = {367-415},
}

@unpublished{ Sz25,
	author = "G{\'a}bor Sz{\'e}kelyhidi",
	title = { { Gromov-Hausdorff limits of collapsing Calabi-Yau fibrations }},
	Note = {Preprint \href{http://arxiv.org/abs/2505.14939}{arXiv:2505.14939}},
	year = "2025"
}

@article{ Li21,
	author = "Yang Li",
	title = { {On collapsing Calabi-Yau fibrations}},
	JOURNAL = {J. Differential Geom.},
    VOLUME = {117},
      YEAR = {2021},
    NUMBER = {3},
     PAGES = {451-483},
}

@article{ LS18,
	author = "Liu,G. and Sz{\'e}kelyhidi,G.",
	title = { {Gromov-Hausdorff limits of K\"ahler manifolds with Ricci curvature bounded below}},
	 JOURNAL = {Geom. Funct. Anal.},
    VOLUME = {32},
      YEAR = {2022},
    NUMBER = {2},
     PAGES = {236–279},
}

@inproceedings {Chern,
    AUTHOR = {Chern, Shiing-shen},
     TITLE = {On holomorphic mappings of hermitian manifolds of the same
              dimension },
 BOOKTITLE = {Entire {F}unctions and {R}elated {P}arts of {A}nalysis
              ({P}roc. {S}ympos. {P}ure {M}ath., {L}a {J}olla, {C}alif.,
              1966)},
     PAGES = {157--170},
 PUBLISHER = {Amer. Math. Soc., Providence, R.I.},
      YEAR = {1968},
   MRCLASS = {53.80 (32.00)},
  MRNUMBER = {0234397},
MRREVIEWER = {W. Barth},
}

@article {DDGHKZ14,
    AUTHOR = {Demailly, Jean-Pierre and Dinew, S{\l}awomir and Guedj,
              Vincent and Pham, Hoang Hiep and Ko{\l}odziej, S{\l}awomir and
              Zeriahi, Ahmed},
     TITLE = {H\"older continuous solutions to {M}onge-{A}mp\`ere equations},
   JOURNAL = {J. Eur. Math. Soc. (JEMS)},
  FJOURNAL = {Journal of the European Mathematical Society (JEMS)},
    VOLUME = {16},
      YEAR = {2014},
    NUMBER = {4},
     PAGES = {619--647},
      ISSN = {1435-9855},
   MRCLASS = {32W20 (32Q15 32U05 32U15 32U40 35B65 35J96 53C55)},
  MRNUMBER = {3191972},
MRREVIEWER = {Muhammed Ali Alan},
       DOI = {10.4171/JEMS/442},
       URL = {http://dx.doi.org/10.4171/JEMS/442},
}

@Article{ Yau78,
	author = "Shing-Tung Yau",
	title = {{On the Ricci curvature of a compact K{\"a}hler manifold and the complex Monge-Amp{\`e}re equation. I.}},
	journal = "Commun. Pure Appl. Math.",
	volume = "31",
	pages = "339--411",
	year = "1978"
}

@Article{ GSS,
	author = "Henri Guenancia",
	title = "{Semistability of the tangent sheaf of singular varieties}",
	journal = "Algebraic Geometry",
	year = "2016",
	month = "november",
	pages = "508-542",
	number = "5",
	volume = "3"
}

@Book{ Siu87,
	author = "Yum-Tong Siu",
	title = {{Lectures on Hermitian-Einstein Metrics for Stable Bundles and K{\"a}hler-Einstein Metrics}},
	publisher = {Birkh{\"a}user},
	year = "1987"
}

@book {PAG1,
    AUTHOR = {Lazarsfeld, Robert},
     TITLE = {Positivity in algebraic geometry. {I}},
    SERIES = {Ergebnisse der Mathematik und ihrer Grenzgebiete. 3. Folge. A
              Series of Modern Surveys in Mathematics},
    VOLUME = {48},
      NOTE = {Classical setting: line bundles and linear series},
 PUBLISHER = {Springer-Verlag, Berlin},
      YEAR = {2004},
     PAGES = {xviii+387},
      ISBN = {3-540-22533-1},
   MRCLASS = {14-02 (14C20)},
  MRNUMBER = {2095471},
MRREVIEWER = {Mihnea Popa},
       DOI = {10.1007/978-3-642-18808-4},
       URL = {http://dx.doi.org/10.1007/978-3-642-18808-4},
}

@article {DnL17,
    AUTHOR = {Di Nezza, Eleonora and Lu, Chinh H.},
     TITLE = {Uniqueness and short time regularity of the weak
              {K}\"{a}hler-{R}icci flow},
   JOURNAL = {Adv. Math.},
  FJOURNAL = {Advances in Mathematics},
    VOLUME = {305},
      YEAR = {2017},
     PAGES = {953--993},
      ISSN = {0001-8708},
   MRCLASS = {32Q15 (32U40 32W20 53C44 53C55)},
  MRNUMBER = {3570152},
MRREVIEWER = {Valentino Tosatti},
       DOI = {10.1016/j.aim.2016.10.011},
       URL = {https://doi.org/10.1016/j.aim.2016.10.011},
}

@article {T,
    AUTHOR = {Tian, Gang},
     TITLE = {K-stability and {K}\"ahler-{E}instein metrics},
   JOURNAL = {Comm. Pure Appl. Math.},
  FJOURNAL = {Communications on Pure and Applied Mathematics},
    VOLUME = {68},
      YEAR = {2015},
    NUMBER = {7},
     PAGES = {1085--1156},
      ISSN = {0010-3640},
   MRCLASS = {53C55 (53C25)},
  MRNUMBER = {3352459},
MRREVIEWER = {Matthew B. Stenzel},
       DOI = {10.1002/cpa.21578},
       URL = {http://dx.doi.org/10.1002/cpa.21578},
}

@Article{ DS14,
	title = {{Gromov-Hausdorff limits of K{\"a}hler manifolds and algebraic geometry}},
	author = "Donaldson, Simon and Sun, Song",
	journal = "Acta Math.",
	volume = "213",
	year = "2014", 
	number = "1",
	pages = "63-106"
}

@article {DS17,
    AUTHOR = {Donaldson, Simon and Sun, Song},
     TITLE = {Gromov-{H}ausdorff limits of {K}\"{a}hler manifolds and algebraic
              geometry, {II}},
   JOURNAL = {J. Differential Geom.},
  FJOURNAL = {Journal of Differential Geometry},
    VOLUME = {107},
      YEAR = {2017},
    NUMBER = {2},
     PAGES = {327--371},
      ISSN = {0022-040X},
   MRCLASS = {53C55 (32Q20 53C23)},
  MRNUMBER = {3707646},
MRREVIEWER = {Stuart James Hall},
       DOI = {10.4310/jdg/1506650422},
       URL = {https://doi.org/10.4310/jdg/1506650422},
}

@Article{ Lu,
	title = "{On holomorphic mappings of complex manifolds.}",
	author = "Y.-C. Lu",
	journal = "J. Diff. Geom.",
	pages = "299--312",
	volume = "2",
	year = "1968",
	keywords = "{complex functions}"
}

\end{document}